\numberwithin{equation}{section}
\newtheorem{Theorem}{Theorem}[section]
\newtheorem*{Theorem*}{Theorem}
\newtheorem*{Corollary*}{Corollary}
\newtheorem*{Problem}{Problem}
\newtheorem{Lemma}[Theorem]{Lemma}
\newtheorem{Proposition}[Theorem]{Proposition}
\newtheorem{Corollary}[Theorem]{Corollary}
\newtheorem*{MainTheorem}{First Theorem}
\newtheorem*{SecondTheorem}{Second Theorem}
\newtheorem*{ThirdTheorem}{Third Theorem}
\theoremstyle{definition}
\theoremstyle{remark}
\newtheorem{Remark}[Theorem]{Remark}
\newtheorem*{Remark*}{Remark}
\newtheorem*{proofofPBW}{Proof of Theorem~\ref{T:PBWenvalg}}
\newbox\squ  % box character for ends of proofs
\newcommand{\C}{\mathbb{C}}
\newcommand{\N}{\mathbb{N}}
\newcommand{\Z}{\mathbb{Z}}
\renewcommand{\k}{\mathbbm{k}}
\renewcommand{\Z}{\mathcal{Z}}
\newcommand{\g}{\mathfrak{g}}
\renewcommand{\sl}{\mathfrak{sl}}
\newcommand{\m}{\mathfrak{m}}
\newcommand{\n}{\mathfrak{n}}
\newcommand{\p}{\mathfrak{p}}
\newcommand{\q}{\mathfrak{q}}
\renewcommand{\O}{\mathcal{O}}
\newcommand{\U}{\mathcal{U}}
\newcommand{\I}{\mathcal{I}}
\newcommand{\V}{\mathcal{V}}
\newcommand{\Co}{\mathcal{C}}
\renewcommand{\P}{{\mathcal{P}}}
\renewcommand{\L}{{\mathcal{L}}}
\newcommand{\F}{\mathcal{F}}
\newcommand{\ad}{\operatorname{ad}}
\newcommand{\Ht}{\operatorname{ht}}
\newcommand{\Lie}{\operatorname{Lie}}
\newcommand{\Ker}{\operatorname{Ker}}
\newcommand{\SL}{\operatorname{SL}}
\newcommand{\gr}{\operatorname{gr}}
\newcommand{\Spec}{\operatorname{Spec}}
\newcommand{\End}{\operatorname{End}}
\newcommand{\Mat}{\operatorname{Mat}}
\newcommand{\Ind}{\operatorname{Ind}}
\newcommand{\Der}{\operatorname{Der}}
\newcommand{\PSpec}{\operatorname{\P-Spec}}
\newcommand{\PPrim}{\operatorname{\P-Prim}}
\newcommand{\Pmod}{\operatorname{-\P-mod}}
\newcommand{\PMod}{\operatorname{-\P-Mod}}
\renewcommand{\mod}{\operatorname{-mod}}
\newcommand{\Mod}{\operatorname{-Mod}}
\newcommand{\Ann}{\operatorname{Ann}}
\newcommand{\Prim}{\operatorname{Prim}}
\newcommand{\Ass}{\operatorname{Ass}}
\newcommand{\opp}{{\operatorname{opp}}}
\newcommand{\oT}{\overline{T}}
\newcommand{\odelta}{\overline{\delta}}
\title[]{\boldmath The orbit method for Poisson orders}
\author{Stephane Launois$^{\dagger}$ and Lewis Topley$^{\ddagger}$}
\address{\begin{center}\small{School of Mathematics, Statistics and Actuarial Science, \\ University of Kent, Canterbury\\ United Kingdom CT2 7FS} \end{center}}
\email{$\dagger$: \tt S.Launois@kent.ac.uk}
\email{$\ddagger$: \tt L.Topley@kent.ac.uk }
\thanks{2010 {\it Mathematics Subject Classification}: 17B63, 16G30, 16D60, 17B08.}
\keywords{Poisson orders, Poisson modules, orbit method, primitive spectrum, Dixmier--Moeglin equivalence}
\begin{document}

\begin{abstract}
A version of Kirillov's orbit method states that the primitive spectrum of a generic quantisation $A$ of a Poisson algebra $Z$ should correspond bijectively to the symplectic leaves of $\Spec(Z)$. In this article we consider a Poisson order $A$ over a complex affine Poisson algebra $Z$. We stratify the primitive spectrum $\Prim(A)$ into symplectic cores, which should be thought of as families of non-commutative symplectic leaves. We then introduce a category $A\PMod$ of $A$-modules adapted to the Poisson structure on $Z$, and we show that when $\Spec(Z)$ is smooth with locally closed symplectic leaves, there is a natural homeomorphism from the spectrum of annihilators of simple objects in $A\PMod$ to the set of symplectic cores in $\Prim(A)$ with its quotient topology. Several application are given to Poisson representation theory.
Our main tool is the Poisson enveloping algebra $A^e$ of a Poisson order $A$, which captures the Poisson representation theory of $A$. For $Z$ regular and affine we prove a PBW theorem for $A^e$ and use this to characterise the annihilators of simple Poisson modules: they coincide with the Poisson weakly locally closed,
the Poisson primitive and the Poisson rational ideals. We view this as a generalised weak Poisson Dixmier--M{\oe}glin equivalence.
\end{abstract}

\maketitle
%\tableofcontents
\section{Introduction}
\subsection{The orbit method}
\label{S:orbmth}
Kirillov's orbit method appears in a wide variety of contexts in representation theory and Lie theory, and is occasionally referred to as a philosophy rather than
a theory, on account of the fact that it serves as a guiding principle in many cases where it cannot be formulated as a precise statement.
The original manifestation of the orbit method states that characters of simple
modules for Lie groups can be expressed as normalisations of Fourier transforms of certain functions on coadjoint orbits \cite{Ki}, but perhaps the most concrete
algebraic expressions of the orbit method is a well-known theorem of Dixmier which asserts that when $G$ is a complex solvable algebraic group and $\g = \Lie(G)$
the primitive ideals of the enveloping algebra $U(\g)$ lie in natural one-to-one correspondence with the set-theoretic coadjoint orbit space $\g^\ast/G$; see \cite[Theorem~6.5.12]{Di}.
Dixmier's theorem fails for complex simple Lie algebras \cite[Remark~9.2(c)]{Go1}, however progress has been made recently by
Losev \cite{Lo} using techniques from deformation theory to show that $\g^*/G$ canonically maps to $\Prim U(\g)$, and the map is an embedding in classical types.
The image consists of a certain completely prime ideals and conjecturally it is always injective.

The Kirillov--Kostant--Souriau theorem asserts that the coadjoint orbits are actually the symplectic leaves of the Poisson variety $\g^*$, and so a broad
interpretation of the orbit method philosophy is the following: suppose that $Z$ is a Poisson algebra and $A$ is a quantisation of $Z$,
then the primitive spectrum $\Prim(A)$ should correspond closely to the set of symplectic leaves of $\Spec(Z)$, indeed a slightly more general principle
was suggested by Goodearl in \cite[\textsection 4.4]{Go1}. There are several examples
of quantum groups and quantum algebras where the \emph{correspondence} we allude to here actually manifests itself as a \emph{bijection}, or better
yet a homeomorphism, once the set of leaves is endowed with a suitable topology; the reader should refer to \cite{Go1} where numerous correspondences
of this type are surveyed.

\subsection{Poisson orders and their modules}

In deformation theory, Poisson algebras arise as the semi-classical limits of quantisations, as we briefly recall.
If $A$ is a  torsion-free $\C[q]$-algebra where $q$ is a parameter, such that $A_0 := A/ q A$ is commutative then $A_0$ is equipped with a Poisson bracket by setting
\begin{eqnarray}
\label{e:hiyachi}
\{\pi(a_1), \pi(a_2)\} := \pi (q^{-1}[a_1, a_2])
\end{eqnarray}
where $\pi : A \rightarrow A_0$ is the natural projection and $a_1, a_2 \in A$. Of course we do not need to assume that
$A_0$ is commutative to obtain a Poisson algebra since formula \eqref{e:hiyachi} endows the centre $Z(A_0)$ with the structure of a Poisson algebra regardless.
In fact, something stronger is true: by choosing $\pi(a_1) \in Z(A_0)$ and $\pi(a_2) \in A_0$ formula \eqref{e:hiyachi} endows $A_0$ with a
biderivation 
\begin{eqnarray}
\label{e:Pbrack}
\{ \cdot, \cdot\} : Z(A_0) \times A_0 \rightarrow A_0
\end{eqnarray}
which restricts to a Poisson bracket $\{ \cdot, \cdot\} : Z(A_0) \times Z(A_0) \rightarrow Z(A_0)$.
In \cite{BGo} Brown and Gordon axiomatised this structure in cases where $A_0$ is a $Z(A_0)$-module of finite type by saying that
\emph{$A_0$ is a Poisson order over $Z(A_0)$}. The precise definition
will be recalled in \textsection\ref{S:poissonorders}, and a slightly more general approach to constructing
Poisson orders in deformation theory will be explained in \textsection \ref{S:POexamples}.
The bracket \eqref{e:Pbrack} induces a map $H : Z(A_0) \rightarrow \Der_\C(A_0)$ and the image is referred to as the set of \emph{Hamiltonian derivations of $A_0$}.
In \emph{op. cit.} they proved some very attractive general results with the ultimate goal of better understanding
the representation theory of symplectic reflection algebras. In this paper we pursue the themes of the orbit method
in the abstract setting of Poisson orders.

When $Z$ is a Poisson algebra and $A$ is a Poisson order over $Z$ we define a Poisson $A$-module to be an $A$-module with a compatible
action for the Hamiltonian derivations $H(Z)$; see \textsection\ref{S:poissonorders}. In the case where $A = Z$
these modules are closely related to $\mathcal{D}$-modules over the affine variety $\Spec(Z)$
(see Remark~\ref{R:Dmods}), and they have appeared in the literature many times (see \cite{Ar, Fa, GP, LOV, Oh2} for example).
In the setting of Poisson orders a similar category of modules was studied in \cite{Ti}.
For a Poisson $A$-module $M$ we define \emph{the singular support of $M$} to be the subset
\begin{eqnarray}
\label{e:SS}
\V(M) := \{ I \in \Prim(A) \mid \Ann_A(M) \subseteq I\}.
\end{eqnarray}
The set of annihilators of simple Poisson $A$-modules will always be equipped with its Jacobson topology. %which will be recalled in \textsection \ref{S:sympcores}.

\subsection{Symplectic cores vs. annihilators of simple Poisson modules}
\label{S:sympcoresandmodules}
%All of the statements and constructions of the following paragraph will be recalled in detail in \textsection\ref{S:sympcores}.
A primitive ideal of $A$ is the annihilator of a simple $A$-module and the set of such ideals equipped with their Jacobson topology is called the primitive
spectrum, denoted $\Prim(A)$. It is often then case that simple $A$-modules cannot be classified but $\Prim(A)$ can be described completely,
which offers good motivation for studying primitive spectra.
The \emph{Poisson core} of an ideal $I\subseteq A$ is the largest ideal $\P(I)$ of $A$ contained in $I$ which is stable under the Hamiltonian derivations,
%(it exists uniquely because the sum of $H(Z)$-stable ideals is another $H(Z)$-stable ideal),
and we define an equivalence relation on the set $\Prim(A)$ by saying $I \sim J$ if $\P(I) = \P(J)$. The equivalence
classes are called the \emph{symplectic cores of $\Prim(A)$}, the set of symplectic cores is denoted $\Prim_\Co(A)$ and the symplectic core of $\Prim(A)$
containing $I$ is denoted $\Co(I)$. We view $\Prim_\Co(A)$ as topological space endowed with the quotient topology.
In case $Z$ is an affine Poisson algebra such that $\Spec(Z)$ has Zariski locally closed symplectic leaves, \cite[Proposition~3.6]{BGo}
shows that the symplectic leaves coincide with the symplectic cores of $\Spec(Z)$; see also Proposition~\ref{P:algebraicleaves}. Thus the cores of $\Prim(A)$ can occasionally be regarded as non-commutative
analogues of symplectic leaves.%; see \textsection\ref{S:sympcores} for more detail on all of the notions introduced here.

\begin{MainTheorem}
Suppose that $\Spec(Z)$ is a smooth complex affine Poisson variety with Zariski locally closed symplectic leaves, and $A$ is a Poisson order
over $Z$. For every simple Poisson $A$-module $M$, there is a unique symplectic core of $\Prim(A)$ which is dense in $\V(M)$ in the Jacobson topology.
Sending $\Ann_A(M)$ to this symplectic core sets up a homeomorphism between the space of annihilators of simple Poisson $A$-modules and the space $\Prim_\Co(A).$
\end{MainTheorem}
The hypothesis that the symplectic leaves of $\Spec(Z)$ are algebraic can be replaced with something strictly weaker; see Remark~\ref{R:leafalg}.
Our theorem has obvious parallels with Joseph's irreducibility theorem \cite{Jo}, which states that for $\g$ a complex semisimple Lie algebra
and $M$ a simple $\g$-module the variety $\{\chi \in \g^* \mid \chi(\gr \Ann_{U(\g)}(M)) = 0\}$ contains a unique dense nilpotent orbit.
Other closely related results can be found in \cite{Gi, Ma}, although all of the papers we cite here apply to Poisson structures which have finitely many symplectic leaves -
this is a hypothesis we do not require. It is natural to wonder whether our first theorem might serve as a starting point for a new proof of the irreducibility theorem.
%However we note that Poisson varieties with $n$ symplectic leaves, where $1 < n  < \infty$, are necessarily singular \cite[Proposition~3.7(2)]{BGo},
%and so lie outside the remit of our theorem.

In order to illustrate in what sense our theorem is an expression of the orbit method philosophy we record the following special case
where $A =Z = \C[\g^*]$ is the natural Poisson structure arising on the dual of an algebraic Lie algebra.
\begin{Corollary*}
Let $G$ be a connected complex linear algebraic group and $\g = \Lie(G)$ its Lie algebra. Then the set of 
annihilators of simple Poisson $\C[\g^*]$-modules lies in natural bijection with the set $\g^*/G$ of coadjoint orbits. 
\end{Corollary*}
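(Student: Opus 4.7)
The plan is to derive the corollary as a direct application of the First Theorem, so the task is essentially one of verifying hypotheses and translating between languages.

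First I would set up the data: take $A = Z = \C[\g^*]$ equipped with the Kirillov--Kostant--Souriau Poisson bracket (the bracket on linear functions is the Lie bracket on $\g$). Then $A$ is trivially a Poisson order over itself, and $\Spec(Z) = \g^*$ is an affine space, hence smooth. The hypothesis that needs verification is the Zariski local closedness of the symplectic leaves. By the Kirillov--Kostant--Souriau theorem the symplectic leaves of $\g^*$ are exactly the coadjoint $G$-orbits, and since $G$ is a (connected) linear algebraic group acting algebraically on the affine variety $\g^*$, all of its orbits are constructible and Zariski locally closed in $\g^*$. Hence the hypotheses of the First Theorem are met.

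Next I would apply the First Theorem: sending $\Ann_A(M)$ to the symplectic core dense in $\V(M)$ yields a homeomorphism between the space of annihilators of simple Poisson $A$-modules and $\Prim_\Co(A)$. Since $A = \C[\g^*]$ is a finitely generated commutative $\C$-algebra, by the Nullstellensatz every primitive ideal is maximal, so $\Prim(A)$ is canonically identified with the closed points of $\g^*$. By Proposition~\ref{P:algebraicleaves} (referenced in the excerpt as the result of Brown--Gordon under precisely this local-closedness hypothesis), the symplectic cores of $\Prim(Z)$ coincide with the symplectic leaves, i.e.\ with the coadjoint orbits. Therefore
\[
\Prim_\Co(A) \;=\; \g^*/G
\]
as sets.

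Composing the homeomorphism of the First Theorem with this identification gives the desired bijection between annihilators of simple Poisson $\C[\g^*]$-modules and coadjoint orbits $\g^*/G$. The only step that requires a moment of care is the verification that coadjoint orbits are Zariski locally closed (so that the Brown--Gordon identification of cores with leaves is available); this is standard for algebraic group actions but should be cited explicitly. Beyond that, the argument is a pure translation of the First Theorem into the commutative Lie-theoretic setting, and there is no substantive obstacle.
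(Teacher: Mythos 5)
Your proof is correct and matches what the paper evidently intends, since the paper states this Corollary with no proof at all, treating it as an immediate consequence of the First Theorem together with the identification of symplectic cores with leaves via Proposition~\ref{P:algebraicleaves}. You have simply spelled out the routine verifications (smoothness of $\g^*$, local closedness of coadjoint orbits via the algebraicity of the $G$-action, the Nullstellensatz identification of $\Prim(\C[\g^*])$ with closed points), which is exactly the right content.
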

\begin{Remark*}
When $G$ is a complex solvable algebraic group and $\g = \Lie(G)$ we may apply Dixmier's theorem (Cf. \textsection \ref{S:orbmth}) and the previous corollary
to deduce that the annihilators of simple Poisson $\C[\g^*]$-modules lie in one-to-one correspondence with annihilators of simple $U(\g)$-modules.
In \textsection \ref{S:solvablePLA} we will make a detailed study of these parameterisations and show that they are dual to each other in a
precise sense.
\end{Remark*}

Another interesting class of Poisson algebras to which the theorem can be applied are the classical finite $W$-algebras.
Let $G$ be a connected complex reductive algebraic group, $e \in \g = \Lie(G)$ a nilpotent element and $(e,h,f)$ and $\sl_2$-triple. If $\g_f$ denotes the centraliser of $f$ in $\g$,
then the slice $e + \g_f$ is transversal to the orbit $G\cdot e$ and it inherits a natural Poisson structure from $\g \cong \g^*$ via Hamiltonian reduction; see \cite{Gi1} for more detail.
\begin{Corollary*}
The annihilators of simple Poisson $\C[e+\g_f]$-modules lie in natural one-to-one correspondence with the connected components of the sets $(G\cdot x) \cap (e + \g_f)$ where $x \in e + \g_f$.
\end{Corollary*}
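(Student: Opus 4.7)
The plan is to apply the First Theorem with $A = Z = \C[e + \g_f]$, and then to identify the symplectic leaves of the Slodowy slice with the connected components of its intersections with adjoint orbits. The variety $e + \g_f$ is an affine space, hence a smooth affine variety, and it carries an algebraic Poisson structure from Gan--Ginzburg's Hamiltonian reduction of $\g \cong \g^*$. Thus the substantive hypothesis to verify is that the symplectic leaves of $e + \g_f$ are Zariski locally closed.

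To establish this and to identify the leaves explicitly I appeal to the standard description of the symplectic foliation on $e + \g_f$: the symplectic leaves are precisely the connected components of the sets $(G \cdot x) \cap (e + \g_f)$ as $x$ ranges over the slice. The reason is that the Hamiltonian vector fields on $e + \g_f$ coming from restrictions of $G$-invariant regular functions on $\g^*$, together with those produced by the residual action of the reductive part of the centraliser of the $\sl_2$-triple, span the tangent spaces of these intersections at every point. Each adjoint orbit $G \cdot x$ is locally closed in $\g$ and Slodowy's transverse slice theorem ensures that $e + \g_f$ meets $G\cdot x$ transversally, so the intersection is smooth and locally closed in $e + \g_f$; its connected components are therefore open inside it, and in particular locally closed in $e + \g_f$.

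With the hypotheses of the First Theorem verified, the corollary is almost immediate: the theorem provides a homeomorphism between the space of annihilators of simple Poisson $\C[e + \g_f]$-modules and the space $\Prim_\Co(\C[e + \g_f])$ of symplectic cores; Proposition~\ref{P:algebraicleaves} identifies the latter with the set of symplectic leaves under the local closedness hypothesis; and the description above realises each leaf as a connected component of some $(G \cdot x) \cap (e + \g_f)$. Composing these three identifications yields the claimed bijection.

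The main obstacle is the geometric identification of the symplectic foliation on the Slodowy slice, for which I would invoke the established finite $W$-algebra literature rather than reproduce the argument. Once this is in hand, the algebraic content of the statement is packaged by the First Theorem and little further work is needed beyond bookkeeping.
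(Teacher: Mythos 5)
Your overall route matches the paper's: specialise the First Theorem to $A = Z = \C[e+\g_f]$, observe that $e+\g_f$ is a smooth affine Poisson variety, cite the literature for the identification of the symplectic leaves with the connected components of $(G\cdot x)\cap(e+\g_f)$, note these are Zariski locally closed (orbits of an algebraic group are locally closed, $e+\g_f$ is closed, and connected components of a locally closed set are locally closed), and then compose with $\Prim_\Co(\C[e+\g_f])\cong\{\text{leaves}\}$ via Proposition~\ref{P:algebraicleaves}. The paper packages the geometric input as a single citation to \cite[Theorem~3.1]{Va}; you propose to cite the finite $W$-algebra literature instead, which is a cosmetic difference.

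One caution about your explanatory aside: it is not correct that ``the Hamiltonian vector fields on $e+\g_f$ coming from restrictions of $G$-invariant regular functions on $\g^*$'' contribute to the tangent spaces of the leaves. $G$-invariant functions are Casimirs of the Lie--Poisson bracket on $\g^*$, and their restrictions remain Casimirs of the reduced bracket on $e+\g_f$, so their Hamiltonian vector fields vanish identically. Likewise the residual action of the reductive centraliser of the $\sl_2$-triple preserves the Poisson structure but its orbits are in general strictly smaller than the symplectic leaves. The correct mechanism is simply that Gan--Ginzburg reduction intertwines the Poisson bivector on $\g^*$ with the reduced one on the slice, so symplectic leaves of the slice are connected components of the transverse intersections with coadjoint orbits. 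Since you were going to replace this sketch with a citation anyway, the error does not sink the proof, but as written that sentence is wrong and should not appear in a final version.
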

\begin{proof}
Thanks to \cite[Theorem~3.1]{Va} we know that the symplectic leaves of $e + \g_f$ are the connected components of the sets $(G\cdot x) \cap (e + \g_f), x\in\g$, in particular they are Zariski locally closed.
\end{proof}
%More applications of the first main theorem in case $A \neq Z$ will be discussed in greater detail in Remark~\ref{S:appoftheorem}.
We now briefly describe the proof of the first theorem.
Showing that the bijection is a homeomorphism is easy, and only requires us to make a comparison between the topologies of the two spaces involved,
which is carried out in \textsection \ref{S:bijishom}. The hard work lies in proving that the support of a simple module contains a dense core,
and that this sets up a bijection. Conceptually this is achieved in three steps. Firstly we we show that whenever the leaves of $\Spec(Z)$
are locally closed, the closures of the symplectic cores of $\Prim(A)$ are determined by the \emph{Poisson primitive ideals}, which we define to be $\PPrim(A) := \{\P(I) \mid I \in \Prim(A)\}$.
Secondly we show that $\PPrim(A)$ is equal to the set of annihilators of simple Poisson $A$-modules.
Finally, we observe in Lemma~\ref{L:keyspeclemma} that there is a unique symplectic core in each closure $\overline{\Co(I)}$, and it follows that there exists a bijection
$ \PPrim(A) \leftrightarrow \Prim_\Co(A)$. The first and second steps are really consequences of our second main theorem,
which gives a detailed comparison of different types of $H(Z)$-stable ideals in Poisson orders, as we now explain.

\subsection{The Poisson Dixmier--M{\oe}glin equivalence for Poisson orders}
\label{S:thePDMEsec}
Let $\k$ be any field, $Z$ be an affine Poisson $\k$-algebra and $A$ a Poisson order over $Z$.
As usual $\Spec(A)$ denotes the set of prime two-sided ideals, and we have $\Prim(A) \subseteq \Spec(A)$.
The \emph{Poisson ideals of $A$} are the two-sided ideals which are stable under the Hamiltonian derivations $H(Z)$.
We write $\PSpec(A)$ for the set of all Poisson ideals of $A$ which are also prime.
The set $\PSpec(A)$ endowed with its Jacobson topology is referred to as the \emph{Poisson spectrum of $A$},
and the elements are known as \emph{Poisson prime ideals of $A$}.
%The \emph{Poisson core} $\P(I)$ of an ideal $I \subseteq A$ is defined to be the largest two-sided Poisson ideal of $A$ contained in $I$. 
%Such an ideal exists uniquely because the sum of any collection of Poisson ideals is again Poisson, and thanks to \cite[3.3.2]{Di}
%we know that $\P$ preserves $\Spec(A)$.
If $A$ is prime and $Z$ is an integral domain then we can form the field of fractions $Q(Z)$, and since $A$ is a $Z$-module of finite type
the tensor product $A \otimes_Z Q(Z)$ is isomorphic to $Q(A)$ the division ring of fractions of $A$; in particular $Q(A)$ exists.
When $I \in \Spec(A)$ we have $I \cap Z \in \Spec(Z)$ (see \cite[Theorem~10.2.4]{MR}, for example) and so we can form the division ring $Q(A/I)$.
If $I$ is a Poisson ideal then the set of derivations $H(Z)$ acts naturally on $A/I$, and the action extends to an action on $Q(A/I)$ by the Leibniz rule
\begin{eqnarray}\label{leibniz}
\delta(ab^{-1}) = \delta(a)b^{-1} + ab^{-1}\delta(b) b^{-1},
\end{eqnarray}
where $\delta \in H(Z)$ and $a,b \in A/I$ with $b \neq 0$. The centre of $Q(A/I)$ will be written $C Q(A/I)$ and
we define the \emph{Poisson centre of $Q(A/I)$} to be the subalgebra
\begin{eqnarray}
\label{e:Pcentre}
C_{\P} Q(A/I):= \{z \in C Q(A/I) \mid H(Z)z = 0\}.
\end{eqnarray}
%There are several classes of Poisson prime ideals whose definitions are motivated by algebraic, geometric and representation-theoretic concerns;
%we shall list them now one by one.
Let $I \in \PSpec(A)$ be a Poisson prime ideal. We say that:
\begin{itemize}
\item $I$ is \emph{Poisson locally closed} if $\{I\}$ is a locally closed subset of $\PSpec(A)$;

\item $I$ is \emph{Poisson weakly locally closed} if there are only finitely many Poisson prime ideals in $A/I$ of height one;

\item $I$ is \emph{Poisson primitive} if $I = \P(J)$ for some $J \in \Prim(I)$;

\item $I$ is \emph{Poisson rational} if $C_{\P} Q(A/I)$ is a finite extension of $\k$;

\item $I$ is \emph{the annihilator of a simple Poisson module} if $$I = \{a \in A \mid a M = 0\}$$ for some simple Poisson $A$-module $M$.
\end{itemize}

\begin{SecondTheorem}
Let $A$ be a Poisson order over an affine Poisson $\C$-algebra $Z$. The following hold:
\begin{enumerate}
\item[(a)] every Poisson locally closed ideal in $\PSpec(A)$ is Poisson primitive;
\item[(b)] every annihilator of a simple Poisson $A$-module is is Poisson rational;
\item[(c)] when $Z$ is regular, every Poisson primitive ideal of $A$ is the annihilator of some simple Poisson $A$-module.
\end{enumerate}
Furthermore the following families of ideals in $\PSpec(A)$ coincide:
\begin{enumerate}
\item[(i)] Poisson weakly locally closed ideals;
\item[(ii)] Poisson primitive ideals;
\item[(iii)] Poisson rational ideals.
\end{enumerate}
Finally, the following are equivalent:
\begin{itemize}
\item[(I)] the Poisson primitive ideals of $A$ are Poisson locally closed on $\PSpec(A)$;
\item[(II)] the symplectic cores of $\Prim(A)$ are locally closed sets cut out by the Poisson primitive ideals
\end{itemize}
When the symplectic leaves of $Z$ are locally closed in the Zariski topology conditions (I) and (II) hold for $Z$ as a Poisson order over itself.
If these equivalent conditions hold for $Z$ then they also hold for $A$.
\end{SecondTheorem}

The equivalence of (i), (ii), (iii) is known as \emph{the weak Poisson Dixmier--M{\oe}glin equivalence}. It has recently been proven for
Poisson algebras by the first named author et al. in \cite{BLLM}, and our approach is to lift the theorem to the setting of Poisson orders
using the close relationships between prime and primitive ideals in finite centralising extensions. Part (a) is similarly a well-known fact
in the setting of Poisson algebras, and the same proof works here. When the Poisson primitive ideals of a Poisson algebra $Z$ 
are Poisson locally closed we say that {\em the Poisson Dixmier--M{\oe}glin equivalence} (PDME) holds for $Z$. Generalising this rubric,
we shall say that the PDME holds for $A$ when conditions (I) and (II) hold for $A$.
It was an open question from \cite{BGo} as to whether every affine Poisson algebra satisfies the PDME, however
recently counterexamples have been discovered \cite{BLLM}. We may now rephrase the last sentence of the second theorem:
we have shown that if the PDME holds for a complex affine Poisson algebra $Z$ then it holds for every Poisson order $A$ over $Z$.

\subsection{The universal enveloping algebra of a Poisson order}
To conclude our statement of results it remains to offer some commentary on (b) and (c) of the second main theorem.
Statement (c) was proposed in the case $A = Z$ in \cite{Oh2} although the proof contains an error
\footnote{We thank Professor Oh for his clarifications and assistance here, and for allowing us to reproduce the example in Remark~\ref{R:hypothesisremark}.}. The converse was conjectured at the same time
and proven in case $Z$ is a polynomial algebra in \cite{OPS}. Since our results are stated in the setting of Poisson modules over Poisson orders, we require new tools and 
new methods. Our main technique is to define and study the {\em (universal) enveloping algebra of a Poisson order}.
This is an associative algebra $A^e$ generated by symbols $\{m(a), \delta(z) \mid a\in A, z\in Z\}$ subject to certain relations \eqref{alghom}-\eqref{deltaxy}
such that category $A^e\Mod$ of left modules is equivalent to the category of Poisson $A$-modules.
Using this construction we are able to define localisation of Poisson modules over Poisson orders, which is our main tool in proving part (b) of the second main theorem.
In order to prove part (c) we show that
when $Z$ is regular, $A^e$ is a free (hence faithfully flat) $A$-module (Corollary~\ref{C:faithfully}), which implies that the ideals of $A^e$ are closely related to the ideals
of $A$ (Cf. Lemma~\ref{idealslemma}). The fact that $A^e$ is $A$-free follows quickly from our last main theorem of this paper, which we view as a PBW theorem
for the enveloping algebras of Poisson orders. There is a natural filtration $A^e = \bigcup_{i \geq 0} \F_i A^e$, defined by
placing generators $\{m(a) \mid a\in A\}$ in degree 0 and $\{\delta(z) \mid z\in Z\}$ in degree 1, which we
call \emph{the PBW filtration of $A^e$}. The associated graded algebra is denoted $\gr A^e$.
The statement and proof of our third and final main theorem are quite similar to Rinehart's PBW theorem for Lie algebroids \cite{Ri}.
\begin{ThirdTheorem}
Suppose that $Z$ is affine and regular over a field. Then the natural surjection $$A \otimes_Z S_Z(\Omega_{Z/\k}) \twoheadrightarrow \gr A^e$$
induced by multiplication in $\gr A^e$ is an isomorphism. 
\end{ThirdTheorem}

\subsection{Structure of the paper} We now describe the
structure of the current paper. In \textsection\ref{S:prelims} we state the definition of a Poisson order: our definition is very slightly different to
the one originally given in \cite{BGo}, although a careful comparison is provided in Remark~\ref{R:comparison}. Poisson modules over Poisson orders
are also defined, and in \textsection\ref{S:POexamples} we go on to to give examples of elementary constructions of Poisson orders and their modules, as well as reviewing the
common construction of examples in deformation theory. In \textsection\ref{S:sympcores} we study the symplectic cores of $\Prim(A)$, and
prove the equivalence of (I) and (II), as well as the subsequent two assertions of second main theorem.
In \textsection\ref{S:PBWandlocalisation} we introduce the enveloping algebra of a Poisson order.
We state the universal property in \textsection\ref{S:envalgfirst} and prove a criterion for $A^e$ to be noetherian.
In \textsection\ref{S:localisation} we use $A^e$ to define and study localisations of Poisson $A$-modules, whilst in \textsection\ref{S:PBWproof}
we prove the PBW theorem and state some useful consequences. In \textsection\ref{S:primitiveandsimples} we prove (b) and (c) of the second theorem
using the tools developed in of \textsection\ref{S:PBWandlocalisation}. In \textsection \ref{S:weakPDME} we prove (a) and the equivalence of (i), (ii), (iii) in 
the second main theorem. Following \cite{Go} we observe that results, such as the PDME, can be studied in the slightly more general context of
finitely generated algebras equipped with a set of distinguished derivations, and it is in this setting that we prove the results of \textsection \ref{S:weakPDME}.
Finally, in \textsection\ref{S:proofofthm1} we show that the second theorem implies the first. In \textsection \ref{S:solvablePLA} we make a careful comparison between Dixmier's
bijection $\g^* / G \rightarrow \Prim U(\g)$ and our bijection $\{\text{annihilators of simple Poisson }\C[\g^*]\text{-modules}\} \rightarrow \g^* / G $
in the case where $\g$ is a solvable, and finally in \textsection \ref{S:appoftheorem} we discuss some famous examples
of Poisson orders arising in deformation theory to which our first main theorem can be applied. We conclude the article by posing some questions
about their Poisson representation theory.

\subsection{A discussion of related results and new directions}
\label{S:relres}
It is worth mentioning that our first main theorem is very close in spirit to a conjecture of Hodges and Levasseur \cite[\textsection 2.8, Conjecture 1]{HL} which seeks to relate the
primitive spectrum of the quantised coordinate ring of a complex simple algebra group $\O_q(G)$ in the case where $q$ is a generic parameter,
to the Poisson spectrum of the classical limit $\O(G)$; see \cite[\textsection 4.4]{Go1} for a survey of results.
Although the spectra are always known to lie in natural bijection, this bijection is only known to be a homeomorphism in case $G = \SL_2(\C)$ and $\SL_3(\C)$ \cite{Fr}.
By contrast, our bijection is always a homeomorphism, however our results only apply to these families of algebras when the parameter is a root of unity.
It would be natural to attempt to strengthen this comparison.

Although our results are fairly comprehensive we expect that part (c) of the second main theorem should hold without the hypothesis that $Z$ is regular,
and so the first main theorem should hold true without assuming $\Spec(Z)$ is smooth. Note that the symplectic leaves of a singular Poisson variety can be defined, thanks to
\cite[\textsection 3.5]{BGo}. This would constitute an extremely worthwhile development, as there are important
examples of Poisson orders over singular Poisson varieties, eg. rational Cherednik algebras. At least this should be achievable for Poisson orders over
isolated surface singularities using the methods of \cite[\textsection 3.4]{LOV} along with our proof of Theorem~\ref{primisann}, which only depends upon the PBW theorem for $A^e$.

%Constructing modules for $\C[\g^*]$ with a prescribed annihilator?

Another motivation for this work is the following: there appear to be deep connections
between the dimensions of simple modules of a Poisson order $A$ over $Z$ and the dimensions of its symplectic leaves of $Z$. We expect that the Poisson representation theory of $A$
will be closely related to the representation theory of $A$, and so the current paper will lay the groundwork for such relationships to be understood in a broader context.

\medskip
\noindent {\bf Ackowledgements:} The authors would like to thanks Ken Brown, Francesco Esposito, Baohua Fu, Iain Gordon, David Jordan and Sei-Qwon Oh for useful discussions and correspondence
on the subject of this paper. We would also like to thank the referee for making useful comments and for suggesting freeness, and its proof, in Corollary~\ref{C:faithfully}(iv).
This research was funded in part by EPSRC grant EP/N034449/1.

\section{Preliminaries}
\label{S:prelims}
\subsection{Notations and conventions}

For the first and second sections we let $\k$ be any field whilst in subsequent sections we shall work over $\C$.
When the ground field is fixed all vector spaces, algebras and unadorned tensor products will be defined over
this choice of field.

When we say that $A$ is a algebra, we mean a not necessarily commutative unital $\k$-algebra.
When we say that $A$ is \emph{affine} we mean that it is semiprime and finitely generated.
By an $A$-module we mean a left module, unless otherwise stated. By a primitive ideal we always mean the annihilator of a left $A$-module.
The category of all $A$-modules is denoted $A\Mod$
and the subcategory of finitely generated $A$-modules is denoted $A\mod$.

When we say that $A$ is \emph{filtered} we mean
that there is a non-negative $\mathbb{Z}$-filtration $\F_0 A \subseteq \F_1 A \subseteq \F_2 A \subseteq \cdots \subsetneq A$
satisfying $\F_{-1} A = 0$, $\F_0 A = \k$, $A = \bigcup_{i \geq 0} \F_i A$ and $(\F_i A) (\F_j A) \subseteq \F_{i+j} A$ for all $i, j \geq 0$.
As usual the associated graded algebra of a filtered algebra is $\gr A := \bigoplus_{i \geq 0} \F_i A / \F_{i-1} A$, and if $a \in \F_i A$ then we
refer to the image of $a$ in $\F_i A / \F_{i-1} A \subseteq \gr A$ as \emph{the top graded component of $a$}. Furthermore, $A$ is said to be
\emph{almost commutative} if $\gr A$ is commutative.
\subsection{Poisson orders and their modules}
\label{S:poissonorders}
A \emph{Poisson algebra} is a commutative algebra $Z$ endowed with a skewsymmetric $\k$-bilinear biderivation $\{\cdot, \cdot\} : Z \times Z \rightarrow Z$
which makes $Z$ into a Lie algebra. %In this paper we will be interested in the case where $Z$ is an affine $\k$-algebra.
Let $A$ be a $Z$-algebra which is a module of finite type over $Z$. 
We say that $A$ is a \emph{Poisson order (over $Z$)} if the Poisson bracket on $Z$ extends to a map
\begin{eqnarray}
\label{e:biderbracket}
\{\cdot ,\cdot\} : Z \times A \longrightarrow A.
\end{eqnarray}
such that the map $H(x) := \{x, \cdot\} : A \rightarrow A$ satisfies the following for all $x, y\in Z$ and $a\in A$:
\begin{enumerate}
\item[(i)] $H(x) \in \Der_\k(A)$;% $H(x)Z \subseteq Z$;
\item[(ii)] $H(xy)a = xH(y)a + yH(x)a$.
%\item[(iii)] $H(x)y = - H(y)x$.
\end{enumerate}
In other words, $\{\cdot, \cdot\}$ is a biderivation. Note that $H(x)y = - H(y)x$ and $H(x)Z \subseteq Z$ for all $x, y\in Z$, since $Z$ is a Poisson algebra
and \eqref{e:biderbracket} extends the Poisson bracket on $Z$.
When the choice of Poisson algebra $Z$ is clear we usually refer to $A$ simply as a Poisson order.
%We write $H(z) \in \Der_\k(A)$ for the derivation $a \mapsto \{z,a\}$ and
We refer to $H(Z)$ as the set of \emph{Hamiltonian derivations of $A$}. In most cases of interest $A$ will be a faithful $Z$-module
and in these cases we view $Z$ as an $H(Z)$-stable
subalgebra of $A$ via the algebra homomorphism $Z \rightarrow A; z\mapsto z1_A$.
\begin{Remark}
\label{R:comparison}
The definition of a Poisson order in \cite{BGo} is slightly weaker than the one given here, as they only assume property (ii)
of $H$ in the case where $x,y,a \in Z$. Our justification for choosing this definition is twofold: firstly the most interesting examples which arise in deformation
theory satisfy these slightly stronger properties; see \textsection \ref{S:POexamples}. Secondly the stronger definition
suggests a stronger definition for a Poisson $A$-module, and the enveloping algebra for this category of modules satisfies
the PBW theorem of \textsection\ref{S:PBWandlocalisation}, which is fundamental to all of our results. 
\end{Remark}

%In this paper all modules over associative algebras will be left modules, unless otherwise stated.
When $Z$ is a fixed Poisson algebra and $A$ is a Poisson order over $Z$, we define a \emph{Poisson $A$-module}
to be an $A$-module $M$ together with a linear map 
$$\nabla : Z \rightarrow \End_\k(M)$$
such that for all $x,y \in Z$, all $a \in A$ and all $m \in M$ we have
\begin{itemize}
\item[(i)] $\nabla(xy) m = x\nabla(y)m + y \nabla(x) m$;
\smallskip
\item[(ii)] $\{x,a \} m = [\nabla(x), a] m$;
\smallskip
\item[(iii)] $\nabla(\{x,y\})m = [\nabla(x), \nabla(y)]m$.
\end{itemize}
%The morphisms of Poisson $A$-modules $M, N$ are linear maps $M \rightarrow N$ commuting with the endomorphisms $\nabla(Z)$ and $A$ of $M, N$,
The morphisms of Poisson $A$-modules are defined in the obvious manner, and the category of all Poisson $A$-modules will be denoted $A\PMod$.
Since Poisson $A$-modules are Poisson $Z$-modules by restriction, we are considering a special class of flat Poisson connections.
\begin{Remark}
\label{R:simplesnotfg}
It is not true that simple Poisson $A$-modules are necessarily finitely generated over $A$. For example when $A = Z = \C[\g^*]$ and
$\g$ is a simple Lie algebra, it is not hard to see that simple Poisson $A$-modules annihilated by the augmentation ideal $(\g) \unlhd A$
are the same as simple $\g$-modules, and these are often infinite dimensional. We thank Ben Webster for this useful observation.
\end{Remark}

\subsection{Examples of Poisson orders and their modules}
\label{S:POexamples}
Every Poisson algebra is a Poisson order over itself. Furthermore, for $Z$ fixed there are several constructions which allow us to construct
new Poisson orders over $Z$ from old ones. Let $A$ be a Poisson order over $Z$. Then:
\begin{enumerate}
\item $\Mat_n(A)$ is a Poisson order for any $n > 0$, with $$H(z) (a_{i,j})_{1\leq i,j\leq n} := (H(z) a_{i,j})_{1\leq i,j\leq n}.$$
\item the opposite algebra $A^\opp$ is a Poisson order;
\item the tensor product $A \otimes_Z B$ of two Poisson orders is again a Poisson order with $$H(z) (a\otimes b) := \{z,a\} \otimes b + a \otimes \{z, b\}.$$
\item when $M$ is a Poisson $A$-module the endomorphism ring $\End_Z(M)$ is a Poisson order and the subalgebra $\End_A(M)$ is a sub-Poisson order,
with $$(H(z) e)m := [\nabla(z), e] m$$
for any $z \in Z$, $m\in M$ and $e\in \End_Z(M)$.
\end{enumerate}
The above constructions are very suggestive of a theory of a Brauer group over $Z$ adapted to the theory of Poisson orders. This is a theme we
hope to pursue in future work.

All other examples of Poisson orders which we will be interested in arise in the context of deformation theory. We follow \cite{KR} closely.
Let $R$ be a commutative associative algebra and let $(\epsilon) \subseteq R$ be a principal prime ideal, and write $k = R/ (\epsilon)$. Consider the $k$-algebra $A_\epsilon := A/ \epsilon A$
with centre $Z_\epsilon$ and write $N_\epsilon$ for the preimage of $Z_\epsilon$ in $A$ under the natural projection $\pi : A \twoheadrightarrow A_\epsilon$. For $a\in N_\epsilon$ and $b \in A$
we have $[a,b] \in \epsilon A$ and so we may define
\begin{eqnarray}
\label{e:poissbrackdef}
\{\pi(a), \pi(b)\} := \pi (\epsilon^{-1} [a,b])
\end{eqnarray}
generalising \eqref{e:hiyachi}. When $A_\epsilon$ is finite over $Z_\epsilon$ the bracket \eqref{e:poissbrackdef} makes $Z_\epsilon$ into a Poisson algebra. If $Z \subseteq Z_\epsilon$
is any Poisson subalgebra such that $A_\epsilon$ is a $Z$-module of finite type then $A_\epsilon$ becomes a Poisson order over $Z$. Notable examples include:
\begin{enumerate}
\item when $R = \mathbb{Z}$, $\epsilon = p \in \mathbb{Z}$ is any prime number and $\g_\mathbb{Z}$ is the Lie algebra of a $\mathbb{Z}$-group scheme then $U(\g_\mathbb{Z}) / pU(\g_\mathbb{Z}) \cong U(\g_p)$
where $\g_p := \g_\mathbb{Z} \otimes_\mathbb{Z} \mathbb{F}_p$ and $\mathbb{F}_p := \mathbb{Z}/(p)$. It is well known that $\g_p$ is a restricted Lie algebra and the calculations of \cite{KR} show that
the $p$-centre $Z_p(\g)$ is a Poisson subalgebra of the centre of $U(\g_p)$, naturally isomorphic to $\mathbb{F}_p[\g_p^*]$ with its Lie--Poisson structure.
Since $U(\g_p)$ is finite over the $p$-centre we see that $U(\g_p)$ is a Poisson order over $Z_p(\g)$.
\item let $R = \C[t^{\pm 1}]$, $\epsilon = (t - q_0)$ for some primitive $\ell$th root of unity $q_0 \in \C$, and $A$ is any of the following: a quantised enveloping algebra
of a complex semisimple Lie algebra, a quantised coordinate ring of a complex algebraic group, any quantum affine space. It is well known that the
$\ell$th powers of the standard generators of $A_\epsilon$ generate a central subalgebra $Z_0$ over which $A_\epsilon$ is a finite module, and \eqref{e:poissbrackdef}
equips $Z_0$ with the structure of a complex affine Poisson algebra, so $A_\epsilon$ is a Poisson order over $Z_0$.
\end{enumerate}

We continue with $A$ a Poisson order over $Z$ and we list some elementary examples of Poisson modules.
The first example of a Poisson $A$-module is $A$, with map $\nabla$ defined by $\nabla(z)a := \{z,a\}$. If $I \unlhd A$ is any left ideal
which is also Poisson, then both $I$ and the quotient $A/I$ admit the structure of a Poisson $A$-module. One obvious
source of such ideals are those of the form $AI$ where $I \unlhd Z$ is any Poisson ideal. A method for constructing Poisson $A$-modules from Poisson
$Z$-modules occurs as a special case of the following crucial lemma.
\begin{Lemma}
\label{L:tensormodulelemma}
Suppose that $B \subseteq A$ are Poisson orders over a Poisson algebra $Z$ and that $M$ is a Poisson $B$-module with structure map $\nabla_B$. Then ${A\otimes_{B}M}$
is naturally an $A$-module, and is additionally a Poisson $A$-module with structure map $\nabla_A$ defined by $\nabla_A(x) (a\otimes m) := \{x,a\} \otimes m + a \otimes \nabla_B(x)m$.
\end{Lemma}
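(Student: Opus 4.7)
The plan is to verify that the proposed formula $\nabla_A(x)(a \otimes m) := \{x,a\} \otimes m + a \otimes \nabla_B(x)m$ descends to a well-defined $\k$-linear map on the balanced tensor product $A \otimes_B M$, and then to verify the three axioms (i)--(iii) for a Poisson $A$-module structure; the $A$-module structure on $A \otimes_B M$ is already in place by construction.

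The first step is well-definedness, which amounts to showing $\nabla_A(x)(ab \otimes m) = \nabla_A(x)(a \otimes bm)$ for every $b \in B$. The derivation property of $H(x)$ on $A$ yields $\{x,ab\} = \{x,a\}b + a\{x,b\}$, while axiom (ii) for the Poisson $B$-module $M$ gives $\nabla_B(x)(bm) = \{x,b\}m + b\nabla_B(x)m$. Since $B$ is a sub-Poisson order of $A$ we have $\{x,b\} \in B$, so sliding $b$ and $\{x,b\}$ across the tensor product shows that the two sides agree. Linearity in $x$ and in $m$ is immediate.

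Axiom (i), $\nabla_A(xy)(a \otimes m) = x\nabla_A(y)(a \otimes m) + y\nabla_A(x)(a \otimes m)$, then follows by combining the biderivation identity $\{xy,a\} = x\{y,a\} + y\{x,a\}$ on $A$ with axiom (i) for $\nabla_B$ and collecting terms. Axiom (ii), $\{x,a'\}(a \otimes m) = [\nabla_A(x), a'](a \otimes m)$ for $a' \in A$, drops out of the Leibniz rule $\{x,a'a\} = \{x,a'\}a + a'\{x,a\}$: direct expansion shows that the $\nabla_B$-contributions cancel in the commutator, leaving exactly $\{x,a'\}a \otimes m$.

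The only delicate check is axiom (iii), $\nabla_A(\{x,y\})(a \otimes m) = [\nabla_A(x), \nabla_A(y)](a \otimes m)$. Expanding $\nabla_A(x)\nabla_A(y)(a \otimes m)$ produces four terms, and after antisymmetrising in $x \leftrightarrow y$ the mixed terms $\{y,a\} \otimes \nabla_B(x)m$ and $\{x,a\} \otimes \nabla_B(y)m$ cancel in pairs. What remains matches the expansion of $\nabla_A(\{x,y\})(a \otimes m) = \{\{x,y\},a\} \otimes m + a \otimes \nabla_B(\{x,y\})m$ precisely when $\nabla_B(\{x,y\})m = [\nabla_B(x), \nabla_B(y)]m$ and $\{\{x,y\},a\} = \{x,\{y,a\}\} - \{y,\{x,a\}\}$ in $A$. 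The first is axiom (iii) for $\nabla_B$; the second is the statement that $A$ itself is a Poisson $A$-module, which is recorded as the first example of a Poisson module in \textsection\ref{S:POexamples} and is built into the working notion of a Poisson order adopted in this paper. The main conceptual point of the proof is exactly this identification of the $Z$-Jacobi identity for the extended bracket on $Z \times A$; all the remaining steps are bookkeeping with the biderivation rules.
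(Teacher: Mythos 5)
Your proof is correct and follows the paper's structure for well-definedness and axioms (i)--(ii) almost verbatim. The one point of genuine divergence is axiom (iii): the paper dispatches this in a single sentence by appealing to the coalgebra structure on the universal enveloping algebra of the Lie algebra $(Z,\{\cdot,\cdot\})$ --- the tensor product of two Lie modules over $\k$ is again a Lie module via the diagonal, and this structure descends to the balanced tensor product once well-definedness is established --- whereas you carry out the direct four-term expansion and check the cancellation by hand. The two routes prove the same thing; yours is more elementary and makes completely explicit which hypotheses are consumed, namely axiom (iii) for $\nabla_B$ and the Jacobi-type identity $\{\{x,y\},a\}=\{x,\{y,a\}\}-\{y,\{x,a\}\}$ on $Z\times A$. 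You are right to flag that the latter identity is being treated as part of the package of a Poisson order (equivalently, that $A$ with $\nabla(z)a=\{z,a\}$ is a Poisson $A$-module, as asserted in \textsection\ref{S:POexamples}); it is not displayed among conditions (i)--(ii) of the definition in \textsection\ref{S:poissonorders} but is implicitly invoked there and in the paper's own appeal to the Hopf-algebra argument, so your phrasing of it as ``built into the working notion of a Poisson order'' is the honest thing to say.
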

\begin{proof}
To see that $\nabla_A$ is well-defined we must check that the kernel of the natural map $A\otimes_\k M \twoheadrightarrow A\otimes_B M$ is preserved
by $\nabla_A(Z)$. Let $x \in Z$, $a \in A$, $b\in B$, $m \in M$ and write $a\otimes m := a \otimes_\k m$. We see that $\nabla_A(x) (ab\otimes m - a\otimes bm)$ is equal to
\begin{eqnarray*}
& & \{x, ab\} \otimes m + ab \otimes \nabla_B(x) m - \{x,a\} \otimes bm - a \otimes \nabla_B(bm) \\
& = & (\{x,a\} b \otimes m - \{x,a\} \otimes bm) + (a \{x, b\} \otimes m - a \otimes \{x,b\}m) \\ & & + (ab \otimes \nabla_B(x)m  - a \otimes b \nabla_B(x) m).
\end{eqnarray*}
This confirms that $\nabla_A(Z)$ is well-defined on $A \otimes_B M$. For the rest of the proof tensor products $a \otimes m$ will be taken over $B$.
The first axiom of a Poisson $A$-module follows from the calculation
\begin{eqnarray*}
\nabla_A(xy) (a\otimes m) &=& \{xy, a\} \otimes m + a \otimes \nabla_B(xy) m \\ &=& (\{x,a\}y + x\{y,a\}) \otimes m +  a\otimes (x\nabla_B(y)m + y\nabla_B(x)m)\\
&=& x(\{y,a\} \otimes m + a\otimes \nabla_B(x) m) + y(\{x,a\} \otimes m + a\otimes \nabla_B(x) m) \\
&=& x \nabla_A(y)(a \otimes m) + y \nabla_A(x) (a \otimes m)
\end{eqnarray*}
where $x,y \in Z$, $a \in A$ and $m \in M$. The second axiom of a Poisson module is a consequence of the next calculation, in which $a,b\in A$ and $x, m$ are as before
\begin{eqnarray*}
[\nabla_A(x) , a] (b\otimes m) &=& \nabla_A(x)(ab \otimes m) - a (\{x,b\} \otimes m + b \otimes \nabla_B(x) m)\\
&=& \{x,ab\} \otimes m + ab \otimes \nabla_B(x) m - a\{x,b\}\otimes m - ab \otimes \nabla_B(x)m \\
&=& \{x,a\} b \otimes m = \{x,a\}(b\otimes m).
\end{eqnarray*}
The third axiom of a Poisson module only regards the Lie algebra structure and so follows from the Hopf algebra structure
on the universal enveloping algebra of the Lie algebra $Z$, since $A$ and $M$ are Poisson $Z$-modules.
\end{proof}

\begin{Remark}
\label{R:Dmods}
It was observed in \cite[Proposition~1.1]{Fa} that when $Z$ is a symplectic affine Poisson algebra over $\C$ every Poisson $Z$-module arises from a unique $\mathcal{D}$-module
on $\Spec(Z)$.
\end{Remark}

\subsection{Symplectic cores in primitive spectra}
\label{S:sympcores}

We continue with an affine Poisson $\k$-algebra $Z$ and a Poisson order $A$ over $Z$. If $\mathcal{S}$ is any collection of ideals of $A$ then we can endow
$\mathcal{S}$ with the \emph{Jacobson topology} by declaring the sets $\{I \in \mathcal{S} \mid \bigcap_{J \in S} J \subseteq I \}$ to be closed, where $S \subseteq \mathcal{S}$
is any subset. We will refer to such a set $\mathcal{S}$ as \emph{a space of ideals} to suggest that we are equipping it with the Jacobson topology.
The space of prime ideals and primitive ideals of
$A$ are denoted $\Spec(A)$ and $\Prim(A)$ respectively. A ring is \emph{Jacobson} if every prime ideal is an intersection of primitive ideals;
clearly this property is equivalent to the statement that $\Prim(A)$ is a topological subspace of $\Spec(A)$, not just a subset.
It is well known that $Z$ is Jacobson, since it is affine and commutative, and so it follows from \cite[9.1.3]{MR} that $A$ is a Jacobson ring.

The $H(Z)$-stable ideals of $Z$ and $A$ are called \emph{Poisson ideals} and the space of prime Poisson ideals 
is called the Poisson spectrum, denoted $\PSpec(Z)$ and $\PSpec(A)$ respectively. Recall that for any ideal $I \subseteq A$ the Poisson core
$\P(I)$ is the largest Poisson ideal contained in $I$; by \cite[3.3.2]{Di} we have $\P(I)$ prime whenever $I$ is prime, and the same holds for $Z$.
\begin{Lemma}
\label{L:Pandcap}
Let $A$ be a Poisson order over $Z$ and $I \subseteq J \subseteq A$ are any ideals with $I$ Poisson.
Denote the quotient map $\pi : A \rightarrow A/I$. We have:
\begin{enumerate}
\item[(i)] $\P(J) \cap Z = \P(J \cap Z)$;
\item[(ii)] $\pi \P(J) = \P(\pi J)$. 
\end{enumerate}
\end{Lemma}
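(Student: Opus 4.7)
\medskip
\noindent \textbf{Proof plan.} Both parts have the same flavour: prove each containment by constructing a candidate Poisson ideal of the appropriate kind and invoking the maximality property that defines the Poisson core. The common ingredient I will use is that $H(Z)$ preserves $Z \subseteq A$ (noted right after the definition of a Poisson order) and that $Z$ maps into the centre of $A$ via the structure map, so that for any ideal $L$ of $Z$ the product $AL$ is a two-sided ideal of $A$ with $H(z)(a\ell) = H(z)(a)\ell + a H(z)(\ell)$ for $a\in A$ and $\ell \in L$.

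For part (i), the inclusion $\P(J) \cap Z \subseteq \P(J\cap Z)$ is immediate: $\P(J)\cap Z$ is an ideal of $Z$ contained in $J\cap Z$, and it is Poisson because both $\P(J)$ and $Z$ are $H(Z)$-stable; maximality of the Poisson core then forces it into $\P(J\cap Z)$. For the reverse inclusion, form $A\cdot \P(J\cap Z)$. Since $\P(J\cap Z)$ is an ideal of $Z$ contained in $J\cap Z \subseteq J$, this is a two-sided ideal of $A$ contained in $J$, and the Leibniz calculation above (using that $\P(J\cap Z)$ is itself $H(Z)$-stable in $Z$) shows it is Poisson. Hence $A\cdot \P(J\cap Z) \subseteq \P(J)$; intersecting with $Z$ gives $\P(J\cap Z) \subseteq \P(J)\cap Z$.

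For part (ii), the first observation is that $I \subseteq \P(J)$: indeed $I$ is a Poisson ideal of $A$ contained in $J$, so by maximality it lies in $\P(J)$. Consequently $\P(J) = \pi^{-1}(\pi\P(J))$ and the bijection between ideals of $A$ containing $I$ and ideals of $A/I$ is available. Since $I$ is Poisson, the derivations $H(Z)$ descend to derivations of $A/I$, and an ideal $K$ of $A$ containing $I$ is Poisson if and only if $\pi(K)$ is. The forward inclusion $\pi\P(J) \subseteq \P(\pi J)$ is then immediate: $\pi\P(J)$ is a Poisson ideal of $A/I$ contained in $\pi J$. For the reverse, take the preimage $\pi^{-1}\P(\pi J)$; this is a Poisson ideal of $A$ containing $I$, and it is contained in $\pi^{-1}(\pi J) = J$ because $I \subseteq J$. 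Hence $\pi^{-1}\P(\pi J) \subseteq \P(J)$, and applying $\pi$ yields $\P(\pi J) \subseteq \pi \P(J)$.

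The argument is almost entirely bookkeeping once one notes the two structural facts above, so there is no substantive obstacle; the only point deserving a moment's care is checking the Poisson property of $A\cdot \P(J\cap Z)$ in part (i) via the Leibniz rule, and verifying that $I \subseteq \P(J)$ at the outset of part (ii) so that the saturation identity $\P(J) = \pi^{-1}(\pi \P(J))$ is available.
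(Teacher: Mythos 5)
Your proof is correct and follows the same approach as the paper: part (i) comes down to the maximality characterisation of the Poisson core (the paper states the chain of inclusions tersely; you supply the verification that $A\cdot\P(J\cap Z)$ is a Poisson ideal contained in $J$, which is the substantive point), and part (ii) is exactly the paper's argument via the inclusion-preserving bijection between Poisson ideals of $A/I$ and Poisson ideals of $A$ containing $I$.
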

\begin{proof}
To prove (i) it suffices to observe that $\P(J \cap Z) \subseteq \P(J) \cap Z \subseteq J \cap Z$, by the definition of $\P$ whilst (ii) follows from the fact that
$\pi$ defines an inclusion preserving bijection between the set of Poisson ideals of $A/ I$ and the set of Poisson ideals of $A$ which contain $I$.
\end{proof}

\begin{Remark}
It is not hard to see that the topology on $\PSpec(A)$ is the subspace topology from the embedding $\PSpec(A) \subseteq \Spec(A)$: if $I \subseteq A$
is any ideal then the set $\{J \in \PSpec(A) \mid I \subseteq J\}$ is equal to $\{J \in \PSpec(A) \mid \bigcap_{i \in S} I_i \subseteq J\}$ where $\{I_i \mid i \in S\}$
is the set of minimal Poisson ideals over $I$.
\end{Remark}

Our purpose now is to define the symplectic stratification of the primitive spectrum $\Prim(A)$ used in the statement of the first main theorem.
Consider the following diagram, which is commutative by part (i) of Lemma~\ref{L:Pandcap}:
\begin{eqnarray}
\begin{array}{c}
\begin{tikzpicture}[node distance=2cm, auto]
\pgfmathsetmacro{\shift}{0.3ex}
\node (A) {$\Prim(A)$};
\node (E) [right of=A] {$ $};
\node (B) [right of=E] {$\PSpec(A)$};

\node (C) [below of=A] {$\Prim(Z)$};
\node (F) [right of=C] {$ $};
\node (D) [right of= F] {$\PSpec(Z)$};

\draw[->] (A) --(B) node[above,midway] {$\P$};
\draw[->] (C) --(D) node[below,midway] {$\P$};
\draw[->] (A) --(C) node[left,midway] {$ $};
\draw[->] (B) --(D) node[right,midway] {$ $};

\end{tikzpicture}
\end{array}
\end{eqnarray}
The vertical arrows denote contraction of ideals $I \mapsto I \cap Z$. The fibres of the map $\Prim(Z) \rightarrow \PSpec(Z)$
are called \emph{the symplectic cores of $\Prim(Z)$}, and they were first studied by Brown and Gordon in \cite{BGo}. We define
\emph{the symplectic cores of $\Prim(A)$} to be the fibres of the map $\Prim(A) \rightarrow \PSpec(A)$. 
For $\m \in \Prim(Z)$ we write $\Co(\m)$ for the symplectic core of $\m$ and for $I\in \Prim(A)$ we write $\Co(I)$ for the symplectic core of $I$.
The following result shows that the symplectic cores of $\Prim(Z)$ are closely related to the symplectic leaves; the first part was proven in \cite[Proposition~3.6]{BGo},
and the second statement in \cite[Theorem~7.1(c)]{Go1}.
\begin{Proposition}
\label{P:algebraicleaves}
Let $Z$ be a complex affine Poisson algebra. For $\m \in \Prim(Z)$, write $\L(\m)$ for the symplectic leaf of $\m$. We have
$\L(\m) \subseteq \Co(\m)$ with equality if the symplectic leaves of $\Prim(Z)$ are algebraic. More generally
$$\Co(\m) = \overline{\L(\m)} \setminus \bigcup \L(\n)$$
where the union is taken over all $\n \in \Prim(Z)$ such that $\L(\n) \subsetneq \overline{\Co(\m)}$. 
\end{Proposition}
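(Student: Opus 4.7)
The strategy is to identify $\overline{\L(\m)}=V(\P(\m))$ in $\Prim(Z)$, after which both assertions of the proposition reduce to elementary bookkeeping about the partition of $\overline{\L(\m)}$ into symplectic leaves. The one nontrivial geometric input is that $\overline{\L(\m)}$ is itself a Poisson subvariety, i.e.\ the vanishing ideal $I(\overline{\L(\m)})\subseteq Z$ is $H(Z)$-stable. This is standard in Poisson geometry: Hamiltonian vector fields are tangent to $\L(\m)$ by definition, and this tangency persists along Zariski closures.

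First I would establish $\L(\m)\subseteq\Co(\m)$. Since $\P(\m)$ is $H(Z)$-stable, the flow of any Hamiltonian vector field preserves the closed set $V(\P(\m))\subseteq\Prim(Z)$, so starting from $\m$ one has $\L(\m)\subseteq V(\P(\m))$, i.e.\ $\P(\m)\subseteq\n$ for every $\n\in\L(\m)$. Maximality of Poisson cores then forces $\P(\m)\subseteq\P(\n)$, and reversing the flow gives the opposite inclusion, so $\P(\n)=\P(\m)$ and $\n\in\Co(\m)$. Combining this with closedness of $V(\P(\m))$ yields $\overline{\L(\m)}\subseteq V(\P(\m))$. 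For the reverse inclusion, the tangency input tells us that $I(\overline{\L(\m)})$ is a Poisson ideal contained in $\m$, hence contained in $\P(\m)$; applying $V$ together with the Nullstellensatz (valid since $Z$ is an affine $\C$-algebra) gives $V(\P(\m))\subseteq V(I(\overline{\L(\m)}))=\overline{\L(\m)}$. Hence $\overline{\L(\m)}=V(\P(\m))$, and the bijection $\P\leftrightarrow V(\P)$ between Poisson prime ideals and irreducible Poisson subvarieties sends $\P(\m)$ to $\overline{\L(\m)}$.

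From this identification both statements drop out. Indeed $\Co(\m)=\{\n:\P(\n)=\P(\m)\}=\{\n:\overline{\L(\n)}=\overline{\L(\m)}\}$, so in particular $\overline{\Co(\m)}=\overline{\L(\m)}$. If every symplectic leaf is algebraic then $\L(\m)$ is locally closed and therefore open dense in its irreducible closure $\overline{\L(\m)}$; any $\n\in\Co(\m)$ then has $\L(\n)$ also open dense in the irreducible space $\overline{\L(\m)}$, so $\L(\m)\cap\L(\n)\neq\emptyset$, and disjointness of distinct leaves forces $\L(\n)=\L(\m)$. This gives $\Co(\m)=\L(\m)$. For the general description, every point of $\overline{\L(\m)}$ lies in a unique symplectic leaf contained in $\overline{\L(\m)}$, and by the displayed equivalence this leaf has Zariski closure equal to $\overline{\Co(\m)}$ exactly when the point already lies in $\Co(\m)$; removing from $\overline{\L(\m)}$ all the leaves whose closure is strictly smaller than $\overline{\Co(\m)}$ therefore leaves precisely $\Co(\m)$, which is the stated formula. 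The only real obstacle, and the only genuinely geometric ingredient, is the tangency fact used to identify $\overline{\L(\m)}=V(\P(\m))$; everything else is formal manipulation with the Poisson--prime correspondence.
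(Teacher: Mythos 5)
The paper does not prove this proposition itself: it cites \cite[Proposition~3.6]{BGo} for the containment/equality statement and \cite[Theorem~7.1(c)]{Go1} for the general formula, and your argument follows the same line as those sources, the decisive input being the identification $\overline{\L(\m)} = V(\P(\m))$ obtained from the two-sided Hamiltonian-tangency argument plus the Nullstellensatz. That part of your write-up, together with the density-and-disjointness argument for the locally closed case, is correct.

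There is one place where you silently change the content of the statement you are proving. In your last paragraph you remove from $\overline{\L(\m)}$ the leaves with $\overline{\L(\n)} \subsetneq \overline{\Co(\m)}$ (``whose closure is strictly smaller''), whereas the displayed formula in the proposition, read literally, removes the leaves satisfying $\L(\n) \subsetneq \overline{\Co(\m)}$, with no closure. These are not the same condition: any non-closed leaf that is Zariski dense in $\overline{\Co(\m)}$ satisfies the second but not the first, and such leaves are precisely those making up $\Co(\m)$ itself whenever $\Co(\m)$ is not closed. Concretely, take $Z = \C[\sl_2^*]$ and $\m$ a regular nilpotent point: then $\overline{\Co(\m)} = \Nc$ is the nilpotent cone and $\Co(\m) = \L(\m) = \Nc \setminus \{0\}$; both leaves $\{0\}$ and $\Nc\setminus\{0\}$ contained in $\Nc$ are proper subsets of $\Nc$, so the face-value formula would return the empty set. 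Your version --- keeping exactly the leaves that are dense in $\overline{\Co(\m)}$, equivalently those $\L(\n)$ with $\overline{\L(\n)} = \overline{\L(\m)}$, equivalently $\P(\n) = \P(\m)$ --- is the correct one, and the paper's condition $\L(\n)\subsetneq\overline{\Co(\m)}$ should be understood with $\overline{\L(\n)}$ in place of $\L(\n)$. You should flag this replacement explicitly rather than asserting without comment that your condition ``is the stated formula.''
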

Thus we think of the symplectic cores of $\Prim(A)$ as being something similar to the symplectic leaves of the primitive spectrum.
If the Poisson primitive ideals of $A$ are Poisson locally closed then we say that \emph{the Poisson Dixmier--M{\oe}glin equivalence (PDME) holds for $A$}.
Later on in the paper (Lemma~\ref{L:locimpprimimprat}) we will show that Poisson locally closed ideals are always Poisson primitive.
The following result is an extension of \cite[Lemma~3.3]{BGo}.
\begin{Lemma}
\label{L:PDME}
Let $A$ be a Poisson order over a complex affine Poisson algebra $Z$. Write $N_I := \bigcap \P(K)$ where the intersection is taken over all $K \in \Prim(A)$ with $\P(I) \subsetneq \P(K)$.
The following are equivalent:
\begin{enumerate}
\item[(i)] the PDME holds for $A$;
\item[(ii)] the symplectic cores of $\Prim(A)$ are locally closed subsets defined by the Poisson cores.
In other words, for all $I \in \Prim(A)$ we have
\begin{eqnarray}
\label{e:PDME1}
\Co(I) = \{ J \in \Prim(A) \mid \P(I) \subseteq J \text{ and } N_I \nsubset J\};
\end{eqnarray}

\item[(iii)] the inclusion $\P(I) \subseteq N_I$ is proper.
\end{enumerate}
Furthermore, if the PDME holds for $Z$ as a Poisson order over itself, then it also holds for $A$.
\end{Lemma}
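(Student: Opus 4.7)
The plan is to reduce everything to the identification
\begin{eqnarray*}
N_I \;=\; \bigcap\{Q \in \PSpec(A) : Q \supsetneq \P(I)\},
\end{eqnarray*}
which I would establish first. The inclusion ``$\supseteq$'' is immediate since each $\P(K)$ in the defining intersection of $N_I$ is itself a Poisson prime strictly larger than $\P(I)$. For the reverse, the Jacobson property of $A$ (which follows from module-finiteness over the affine commutative algebra $Z$; see \cite[9.1.3]{MR}) lets me write any Poisson prime $Q \supsetneq \P(I)$ as an intersection of primitives $K$, each of which has $\P(K) \supseteq Q \supsetneq \P(I)$ and therefore contributes to $N_I$; this forces $N_I \subseteq \P(K) \subseteq K$ and, on intersecting over $K$, $N_I \subseteq Q$.

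With this identity in hand, (i) $\Leftrightarrow$ (iii) is essentially tautological: $\{\P(I)\}$ is locally closed in $\PSpec(A)$ iff $\{Q \in \PSpec(A) : Q \supsetneq \P(I)\}$ is closed, and since the intersection of this set is $N_I$, this happens iff $\P(I) \subsetneq N_I$. For (iii) $\Rightarrow$ (ii), if $J$ satisfies the right-hand side of \eqref{e:PDME1} then $\P(J) \supseteq \P(I)$, and a strict inclusion would place $\P(J)$ among the Poisson primes defining $N_I$, forcing $N_I \subseteq \P(J) \subseteq J$, against assumption; conversely, for $J \in \Co(I)$, if $N_I \subseteq J$ then, since $N_I$ is itself Poisson, $N_I \subseteq \P(J) = \P(I)$, contradicting (iii). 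In the other direction, (ii) applied to $I$ itself gives $N_I \nsubset I$, hence $N_I \nsubset \P(I)$, and (iii) then follows from the automatic inclusion $\P(I) \subseteq N_I$.

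For the concluding assertion, the strategy is to pull local closedness back from $\PSpec(Z)$ via contraction. Fix $I \in \Prim(A)$ and set $\m := I \cap Z$; this is a maximal ideal of $Z$ by the standard primitive-to-maximal contraction for finite centralising extensions (since $A$ is a PI ring). Lemma~\ref{L:Pandcap}(i) gives $\P(I) \cap Z = \P(\m)$, and the hypothesised PDME for $Z$ together with the identity of the first paragraph (applied inside $\PSpec(Z)$) yields $\P(\m) \subsetneq N_\m^Z$. Incomparability for finite centralising extensions \cite[Chapter 10]{MR} then ensures that any $Q \in \PSpec(A)$ with $Q \supsetneq \P(I)$ has $Q \cap Z \supsetneq \P(\m)$, and since $Q \cap Z$ is itself a Poisson prime of $Z$ this gives $N_\m^Z \subseteq Q \cap Z \subseteq Q$. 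Because $Z$ is central in $A$, the ideal $AN_\m^Z$ is a two-sided Poisson ideal of $A$ lying inside every such $Q$, hence inside $N_I$; if it lay inside $\P(I)$ we would obtain $N_\m^Z \subseteq \P(\m)$, contrary to the PDME for $Z$. This forces $\P(I) \subsetneq N_I$, and (i) $\Leftrightarrow$ (iii) delivers the PDME for $A$. The main obstacle I anticipate is this last step: keeping careful track of the distinction between Poisson primes and Poisson cores of primitives while juggling centrality, contraction and incomparability---the first-paragraph identification is really the linchpin that makes the bookkeeping manageable.
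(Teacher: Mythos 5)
Your proposal is correct and follows essentially the same route as the paper: the identity $N_I = \bigcap\{Q \in \PSpec(A) : Q \supsetneq \P(I)\}$ (which you prove via the Jacobson property) serves the same purpose as the paper's appeal to Lemma~\ref{intover}, and the concluding assertion is proved in both cases by combining the PDME for $Z$, Lemma~\ref{L:Pandcap}, and incomparability for the module-finite central extension $Z \subseteq A$. Your version is slightly more explicit in two places where the paper is terse --- making the maximality of $\m = I\cap Z$ and the construction of the intermediate ideal $AN_\m^Z$ explicit --- but the underlying argument is the same.
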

\begin{proof}
If $I \in \Prim(A)$ then, using Lemma~\ref{intover}, $\{\P(I)\}$ is a locally closed subset of $\PPrim(A)$ if and only if
the intersection $N_I$ properly contains $\P(I)$, so (i) $\Leftrightarrow$ (ii). We point out that the lemma just cited
does not depend on any of the results of this article which precede it, and follows straight from the definitions.
It is not hard to see that for $I \in \Prim(A)$ we have
$$\Co(I) = V(\P(I)) \setminus V(N_{I})$$
if and only if $N_I \neq \P(I)$, from which the equivalence of (ii) and (iii) follows.

Now suppose that $\P(I\cap Z) \subsetneq \bigcap \P(\m)$ where the intersection is taken over all ideals $\m \in \Prim(Z)$
such that $\P(I \cap Z) \subsetneq \P(\m)$. Using Lemma~\ref{L:Pandcap},
we deduce that $\P(I) \cap Z = \P(I\cap Z) \subsetneq \bigcap \P(\m)$, where the intersection is taken
over all $\m \in \Prim(Z)$ such that $I \cap Z \subsetneq \P(\m)$. By the incomparability property over essential extensions \cite[Theorem~6.3.8]{FS} 
we see that  $\P(I) \subsetneq \P(J)$ implies $\P(I \cap Z) \subsetneq \P(J \cap Z)$ and so from $\bigcap \P(\m) \subseteq N_I \cap Z$ we deduce that
$\P(I) \subsetneq N_I$. We conclude from (iii) $\Rightarrow$ (i) that the PDME holds for $A$.

\end{proof}

\section{The universal enveloping algebra of a Poisson order}
\label{S:PBWandlocalisation}
Throughout this entire section we work over an arbitrary field $\k$.
Let $Z$ be a Poisson $\k$-algebra and let $A$ be a Poisson order over $Z$.

\subsection{Definition and first properties of the enveloping algebra}
\label{S:envalgfirst}
Poisson $A$-modules can be thought of as modules
over a non-associative algebra due to the action of the derivations $\nabla(Z)$, and one encounters elementary
technical problems with dealing with such modules. For example, if $M$ is a simple Poisson $A$-module
then it is not necessarily finitely generated over $A$ (Cf. Remark~\ref{R:simplesnotfg}); this contrasts with the situation for simple
$A$-modules where any such module is generated by any nonzero element. To remedy this problem we take a viewpoint
which is common in universal algebra: we write down an associative algebra
whose module category is equivalent to $A\PMod$ and we use this new algebra to study simple Poisson $A$-modules and their annihilators.

The \emph{Poisson enveloping algebra $A^e$} of the Poisson order $A$ over $Z$ is the $\k$-algebra with generators
\begin{eqnarray}
\label{e:Aegens}
\{\alpha(a) \mid a \in A\} \cup \{\delta(z) \mid z \in Z\}
\end{eqnarray}
and relations 
\begin{eqnarray}
\label{alghom} & & \alpha : A \rightarrow A^e \text{ is a unital algebra homomorphism};\\
\label{liehom} & & \delta : Z \rightarrow A^e \text{ is a Lie algebra homomorphism};\\
\label{mdelta} & & \alpha(\{x, a\}) = [\delta(x), \alpha(a)];\\
\label{deltaxy} & & \delta(xy) = \alpha(x) \delta(y) + \alpha(y) \delta(x),
\end{eqnarray}
for all $x,y \in Z$ and all $a\in A$. Recall that the Poisson algebra $Z$ is a Poisson order over itself and we write $Z^e$ for the enveloping algebra
of $Z$. The algebra $Z^e$ has been extensively studied in the mathematical literature, although the first results appeared in \cite{Ri}, since
Poisson algebras are examples of Lie--Rinehart algebras. Our next observation follows straight from the relations.
\begin{Lemma}
\label{L:psilemma}
There is a natural homomorphism $Z^e \rightarrow A^e$ which sends the elements $\{\alpha(z), \delta(z) \mid z\in Z\}$ of $Z^e$
to the elements of $A^e$ with the same names.
\end{Lemma}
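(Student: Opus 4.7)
The plan is to invoke the universal property of the presentation of $Z^e$: since $Z^e$ is defined by generators and relations analogous to \eqref{alghom}--\eqref{deltaxy} (with $A$ replaced by $Z$ throughout), to produce a $\k$-algebra homomorphism $Z^e \to A^e$ it suffices to exhibit images in $A^e$ of the generators $\alpha(z)$ and $\delta(z)$ of $Z^e$ which satisfy those same relations.

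First I would specify the map on generators by sending $\alpha(z) \in Z^e$ to $\alpha(z\cdot 1_A) \in A^e$, where $Z \to A,\; z\mapsto z\cdot 1_A$ is the structure homomorphism of $A$ as a $Z$-algebra, and sending $\delta(z)\in Z^e$ to $\delta(z)\in A^e$. Then I would verify the four families of defining relations of $Z^e$ in $A^e$. The unital algebra homomorphism property is immediate from the fact that $\alpha:A\to A^e$ is a unital algebra homomorphism composed with the structure map $Z\to A$. The Lie algebra homomorphism property is a direct restriction of \eqref{liehom}. The mixed relation $\alpha(\{x,y\})=[\delta(x),\alpha(y)]$ for $x,y\in Z$ follows from \eqref{mdelta} upon taking $a=y\cdot 1_A\in A$ and using the fact that the bracket $Z\times A\to A$ extends the Poisson bracket on $Z$, so that $\{x,y\cdot 1_A\}_A=\{x,y\}_Z\cdot 1_A$; the latter identity is straightforward from the Leibniz rule for $H(x)\in\Der_\k(A)$ together with $H(x)(1_A)=0$. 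Finally the Leibniz relation $\delta(xy)=\alpha(x)\delta(y)+\alpha(y)\delta(x)$ for $x,y\in Z$ is verbatim \eqref{deltaxy}.

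There is no real obstacle here: the defining relations of $Z^e$ form a special case of those defining $A^e$, so the statement is essentially tautological. The only mild subtlety is that when $A$ is not faithful as a $Z$-module the structure map $Z\to A$ may fail to be injective, so the resulting homomorphism $Z^e\to A^e$ need not itself be injective; however no such injectivity is being claimed. In the faithful case the image of $Z^e$ inside $A^e$ is precisely the Lie--Rinehart enveloping algebra of $Z$ sitting naturally inside $A^e$, which is presumably what will be invoked in later sections when the PBW theorem for $A^e$ is compared with the Rinehart PBW theorem for $Z^e$.
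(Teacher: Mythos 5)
Your proposal is correct and takes the same route as the paper, which simply records that the lemma ``follows straight from the relations'' --- you have merely written out the verification that the generators of $Z^e$, when sent to the correspondingly named elements of $A^e$ (with $\alpha(z)$ mapping to $\alpha(z\cdot 1_A)$), satisfy the defining relations \eqref{alghom}--\eqref{deltaxy} of $Z^e$, invoking the universal property of a presentation. One small nit: the identity $\{x, y\cdot 1_A\}_A = \{x,y\}_Z\cdot 1_A$ is not something to be deduced from the Leibniz rule --- it is exactly what the phrase ``the Poisson bracket on $Z$ extends to $Z\times A\to A$'' in the definition of a Poisson order means, so the Leibniz-rule justification you offer is unnecessary (and by itself would be circular); this does not affect the correctness of the argument.
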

Next we record some criteria for $A^e$ to satisfy the ascending chain condition on ideals.
\begin{Lemma}
\label{L:noetherian}
If $Z$ is noetherian or $A$ is finitely generated then both $A$ and $A^e$ are noetherian.
\end{Lemma}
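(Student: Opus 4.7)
The plan is to split the proof into noetherianity of $A$, which is elementary, and noetherianity of $A^e$, which requires a PBW-type filtration argument. For the first, the Poisson order axioms ensure that $A$ is a finite $Z$-module with $Z$ central in $A$. If $Z$ is noetherian, then every one-sided ideal of $A$ is a $Z$-submodule of the noetherian module $A$, so $A$ satisfies the ascending chain condition. If instead $A$ is finitely generated as a $\k$-algebra, then the Artin--Tate lemma (in its non-commutative version for central subrings) gives that $Z$ is itself a finitely generated $\k$-algebra, hence noetherian by Hilbert's basis theorem, and we reduce to the first case.

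For the noetherianity of $A^e$, I would endow $A^e$ with the PBW filtration of the introduction, with $\alpha(A)$ in degree zero and $\delta(Z)$ in degree one. Relations \eqref{mdelta} and \eqref{liehom} give $[\delta(x),\alpha(a)] = \alpha(\{x,a\}) \in \F_0 A^e$ and $[\delta(x),\delta(y)] = \delta(\{x,y\}) \in \F_1 A^e$, so the principal symbols commute and $\gr A^e$ is a commutative $A$-algebra generated in degree one by the images of $\delta(Z)$. Relation \eqref{deltaxy} becomes the Leibniz identity $\overline{\delta(xy)} = x \overline{\delta(y)} + y \overline{\delta(x)}$ in $\gr_1 A^e$, so $z \mapsto \overline{\delta(z)}$ is a $\k$-derivation of $Z$ into the $Z$-module $\gr_1 A^e$. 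The universal properties of $\Omega_{Z/\k}$ and of the symmetric algebra then produce a surjective $A$-algebra map
\begin{eqnarray*}
A \otimes_Z S_Z(\Omega_{Z/\k}) \twoheadrightarrow \gr A^e.
\end{eqnarray*}

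Under the hypotheses of the lemma, $Z$ is noetherian and, in particular when $A$ is finitely generated over $\k$, is also affine, so $\Omega_{Z/\k}$ is a finitely generated $Z$-module and the source above is a finitely generated commutative $A$-algebra. Hilbert's basis theorem then shows that this source is noetherian, whence so is its quotient $\gr A^e$, and the standard lifting-of-ideals argument (lift an ascending chain in $A^e$ to its top graded symbols in $\gr A^e$) transfers the ACC to $A^e$.

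The main obstacle lies in handling the pure ``$Z$ noetherian'' hypothesis: a noetherian $Z$ need not be a finitely generated $\k$-algebra, and then $\Omega_{Z/\k}$ may fail to be finitely generated over $Z$. I expect the resolution either uses an implicit affineness convention in the paper's running setup, or exploits the finiteness of $A$ over $Z$ to control $\gr_1 A^e$ by a finite set of symbols independently of whether $\Omega_{Z/\k}$ is itself finitely generated as a $Z$-module.
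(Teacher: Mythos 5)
For the noetherianity of $A$ your argument coincides with the paper's (Artin--Tate reduces to the case $Z$ noetherian, and $A$ is then a finite module over the central noetherian subring $Z$). For $A^e$ you take a genuinely different route. The paper's proof is essentially a citation: it observes that $\alpha\colon A\to A^e$ is an almost normalizing extension in the sense of \cite[1.6.10]{MR} --- the finitely many $\delta(x_j)$, for $x_1,\dots,x_n$ generating $Z$ as a $\k$-algebra, normalize $\alpha(A)$ by \eqref{mdelta}, \eqref{liehom} and \eqref{deltaxy} --- and invokes \cite[Theorem~1.6.14]{MR}. You instead work with the PBW filtration and the surjection $A\otimes_Z S_Z(\Omega_{Z/\k})\twoheadrightarrow\gr A^e$; that is a legitimate and more self-contained alternative, and you are right that only the surjection is needed (not the isomorphism of Theorem~\ref{T:PBWenvalg}), so no regularity hypothesis on $Z$ intrudes.

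One step as written is false: $\gr A^e$ is not a commutative $A$-algebra in general, since $\gr_0 A^e\cong A$ and $A$ is typically noncommutative. What the top-degree parts of \eqref{mdelta} and \eqref{liehom} actually give, as recorded in the proof of Lemma~\ref{L:envgenset}, is that the image of $\gr Z^e$ is \emph{central} in $\gr A^e$. The corrected finish should go like this: $S_Z(\Omega_{Z/\k})$ is a finitely generated commutative $Z$-algebra (once $Z$ is affine), hence noetherian by Hilbert's basis theorem; its image is a central commutative noetherian subring of $\gr A^e$; and $\gr A^e$ is a \emph{finite module} over that image, because $A$ is a finite $Z$-module and $\gr A^e$ is generated as a ring by the central image together with $\gr_0 A^e=A$. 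A ring that is module-finite over a central commutative noetherian subring is noetherian, so $\gr A^e$ is noetherian, and the symbol-lifting argument transfers the ACC to $A^e$. With that correction your proof is sound.

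On your closing caveat: you are right that a noetherian $Z$ need not be affine over $\k$, in which case $\Omega_{Z/\k}$ may fail to be finite over $Z$. But the paper's proof has exactly the same latent requirement, since an almost normalizing extension in the sense of McConnell--Robson must have a \emph{finite} set of normalizing generators, i.e.\ requires $Z$ finitely generated as a $\k$-algebra. In every use made of this lemma in the paper $Z$ is affine (or $A$ is finitely generated, whence Artin--Tate forces $Z$ affine), so neither proof is damaged by this in practice.
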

\begin{proof}
The Artin--Tate lemma shows that when $A$ is finitely generated so too is $Z$, and so $Z$ is noetherian. It suffices to
prove that when $Z$ is noetherian so too are $A$ and $A^e$. 
The extension $Z\subseteq A$ is centralizing in the sense of \cite[10.1.3]{MR} and so Corollary~10.1.11 of that book
shows that $A$ is noetherian. Now by the relations the map of rings $\alpha : A \rightarrow A^e$ is almost normalizing in the sense
of \cite[1.6.10]{MR} and so the lemma follows from Theorem~1.6.14 of the same book.
\end{proof}

When $a\in A^e$ we write $\ad(a)$ for the derivation of $A^e$ given by $b \mapsto ab - ba$.
The following two statements can be proven by induction on \eqref{mdelta} and \eqref{deltaxy} respectively.
\begin{Lemma}
\label{L:extendedrels}
For $x_1,...,x_n \in Z$ and $a \in A$ we have:
\begin{enumerate}
\item[(i)] $\alpha(H(x_1) \cdots H(x_n) a) = \ad(\delta(x_1))\cdots \ad(\delta(x_n)) a$;
\item[(ii)] $\delta(x_1 \cdots x_n) = \sum_{i=1}^n x_1 \cdots x_{i-1} \hat x_i x_{i+1} \cdots x_n \delta(x_i)$.
\end{enumerate}
\end{Lemma}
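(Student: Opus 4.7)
The plan is to prove both parts by straightforward induction on $n$, following the hint in the statement that (i) unfolds from \eqref{mdelta} and (ii) from \eqref{deltaxy}. Throughout I will suppress $\alpha$ where the meaning is clear; on the right-hand side of (i), the symbol $a$ is interpreted as $\alpha(a)\in A^e$, while the factor $x_1\cdots\widehat{x_i}\cdots x_n$ in (ii) denotes the product of the $\alpha(x_j)$ in $A^e$, which is well-defined and order-independent because $Z$ is commutative and $\alpha$ is an algebra homomorphism by \eqref{alghom}.

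For (i), the base case $n=1$ is exactly \eqref{mdelta}, which reads $\alpha(H(x_1)a)=\alpha(\{x_1,a\})=[\delta(x_1),\alpha(a)]=\ad(\delta(x_1))\alpha(a)$. For the inductive step, set $b:=H(x_2)\cdots H(x_n)a$, which again lies in $A$ since each $H(x_i)\in\Der_\k(A)$. Applying \eqref{mdelta} to the pair $(x_1,b)$ and then invoking the inductive hypothesis on $\alpha(b)$ gives
\[
\alpha\bigl(H(x_1)\cdots H(x_n)a\bigr)=\alpha(H(x_1)b)=[\delta(x_1),\alpha(b)]=\ad(\delta(x_1))\ad(\delta(x_2))\cdots\ad(\delta(x_n))\alpha(a),
\]
as required.

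For (ii), the $n=2$ case is exactly \eqref{deltaxy}. For the inductive step, write $x_1\cdots x_n=x_1\cdot(x_2\cdots x_n)$ and apply \eqref{deltaxy} to obtain
\[
\delta(x_1\cdots x_n)=\alpha(x_1)\,\delta(x_2\cdots x_n)+\alpha(x_2\cdots x_n)\,\delta(x_1).
\]
The inductive hypothesis rewrites the middle factor as $\sum_{i=2}^{n}\alpha(x_2\cdots\widehat{x_i}\cdots x_n)\,\delta(x_i)$. Substituting this and using commutativity within $\alpha(Z)\subseteq A^e$ to absorb the leading $\alpha(x_1)$ into each prefactor yields the desired sum over $i=1,\ldots,n$, where the $i=1$ term comes from the second summand above.

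There is essentially no obstacle beyond bookkeeping; the only point requiring any care is that in (ii) the leading $\alpha(x_1)$ commutes freely with the other $\alpha(x_j)$ but not with the $\delta(x_i)$ on the right, so the reassembly must be done inside the commutative subalgebra $\alpha(Z)\subseteq A^e$ before the $\delta(x_i)$ are attached.
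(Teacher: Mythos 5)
Your proof is correct and is exactly the induction the paper has in mind: the authors simply assert that both parts "can be proven by induction on \eqref{mdelta} and \eqref{deltaxy} respectively," and your argument fills in precisely those inductions, with the base cases being the relations themselves. The bookkeeping in part (ii) is handled correctly (the leading $\alpha(x_1)$ only ever multiplies against elements of $\alpha(Z)$, so no commutation with the $\delta(x_i)$ is needed).
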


We define a filtration on $A^e$
by placing $A$ in degree $0$ and $\delta(z)$ in degree $1$ for all $z \in Z$.
We call the resulting filtration \emph{the PBW filtration on $A^e$}
\begin{eqnarray}
\label{e:PBWfilt}
A^e = \bigcup_{i\geq 0} \F_i A^e
\end{eqnarray}
and as usual we denote the associated graded algebra by $\gr A^e$. One of our main tools in this paper is a precise description of $\gr A^e$, which we give in
Theorem~\ref{T:PBWenvalg}. For now we record a precursor to that result which will be needed when describing localisations of Poisson modules.
\begin{Lemma}
\label{L:envgenset}
If $\{a_i \mid i \in I\}$ generates $A$ and $\{x_i \mid i \in J\}$ generates $Z$ as $\k$-algebras, where $I$ and $J$ are index sets and $J$ is totally ordered,
then $A^e$ is spanned by the monomials
\begin{eqnarray*}
\alpha(a_{i_1}) \cdots \alpha(a_{i_n}) \delta(x_{j_1}) \cdots \delta(x_{j_m})
\end{eqnarray*}
where $i_1,...,i_n \in I$ and $j_1 \leq \cdots \leq j_m$ lie in $J$. The same statement holds with the elements $\alpha(a)$ occurring after the elements $\delta(x)$.
\end{Lemma}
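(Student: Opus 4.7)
The plan is to set $V \subseteq A^e$ to be the $\k$-linear span of the monomials described in the first assertion, note that $1 \in V$, and show $V = A^e$ by establishing stability under left multiplication by a finite-type algebra generating set. First I would verify that $A^e$ is generated as a $\k$-algebra by $\{\alpha(a_i) \mid i \in I\} \cup \{\delta(x_j) \mid j \in J\}$ alone: every $\alpha(a)$ is a polynomial in the $\alpha(a_i)$ because $\alpha$ is a unital algebra homomorphism and $\{a_i\}$ generates $A$, while any $\delta(z)$ reduces to a combination $\sum_l \alpha(y_l)\delta(x_{j_l})$ via $\k$-linearity of $\delta$ and the iterated Leibniz rule of Lemma~\ref{L:extendedrels}(ii). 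Stability of $V$ under left multiplication by $\alpha(a_i)$ is trivial.

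For $\delta(x_j)$ acting on a typical element $\alpha(a_{i_1})\cdots\alpha(a_{i_n})\delta(x_{j_1})\cdots\delta(x_{j_m}) \in V$, I would proceed in two stages. Stage one commutes $\delta(x_j)$ past the $\alpha$-block by iterating relation \eqref{mdelta}, producing a main term $\alpha(a_{i_1})\cdots\alpha(a_{i_n})\delta(x_j)\delta(x_{j_1})\cdots\delta(x_{j_m})$ together with a correction of the shape $\alpha(\{x_j,\, a_{i_1}\cdots a_{i_n}\})\,\delta(x_{j_1})\cdots\delta(x_{j_m})$ having strictly fewer $\delta$ factors. Stage two is a bubble sort on the main term using \eqref{liehom}: each adjacent swap $\delta(x_j)\delta(x_{j_k}) = \delta(x_{j_k})\delta(x_j) + \delta(\{x_j, x_{j_k}\})$ yields a main term with one fewer inversion in the $\delta$-string and a correction $\delta(\{x_j, x_{j_k}\})$ which Lemma~\ref{L:extendedrels}(ii) rewrites as a sum $\sum_l \alpha(y_l)\delta(x_{j'_l})$ of strictly smaller total $\delta$-count.

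The argument should be organised as a double induction: an outer induction on the number of $\delta$ factors in the monomial (with trivial base case since $\alpha(A) \subseteq V$) and an inner induction on the number of inversions in the $\delta$-list. Each rewriting step either strictly decreases the inner measure on the main term, or strictly decreases the outer measure on the correction term, so the induction closes. The second form of the statement — with $\delta$'s preceding $\alpha$'s — should follow by the symmetric argument, commuting $\alpha$'s rightward through $\delta$'s with the same relation \eqref{mdelta}, or equivalently by applying the first form to the opposite Poisson order $A^\opp$ from Example~(2) of \textsection\ref{S:POexamples}. The main obstacle I anticipate is purely organisational: making the lexicographic complexity (number of $\delta$'s, then number of inversions) explicit enough that the double induction truly closes without circularity, which is the standard sticking point in PBW-style ordering arguments.
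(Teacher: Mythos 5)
Your proposal is correct, but it takes a genuinely different route from the paper. The paper dispatches the spanning claim in one line by passing to the associated graded algebra with respect to the PBW filtration: once it is observed that the top graded components of relations \eqref{liehom} and \eqref{mdelta} are trivial (both sides of each relation lie in strictly lower filtration degree), the images of the $\delta(x_j)$ in $\gr A^e$ are central, so every product of generators in $\gr A^e$ can be freely reordered into the shape $\overline{\alpha}\cdots\overline{\alpha}\,\overline{\delta}\cdots\overline{\delta}$, and the usual filtration induction then lifts this spanning statement from $\gr A^e$ back to $A^e$. Your explicit double induction — outer on the number of $\delta$-factors, inner on inversions plus misplaced $\alpha$'s, with corrections from \eqref{mdelta}, \eqref{liehom} and Lemma~\ref{L:extendedrels}(ii) all dropping the outer measure — is doing by hand exactly what the passage to $\gr A^e$ does automatically: every correction term lies in strictly lower filtration degree, so killing them is precisely what taking associated graded means. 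Your version is more elementary and exposes the mechanism, at the cost of having to organise the lexicographic complexity carefully (and note that the Lie-bracket corrections $\delta(\{x_j,x_{j_k}\})$ reintroduce $\alpha$'s into the middle of the $\delta$-block after expansion by Lemma~\ref{L:extendedrels}(ii), so ``number of inversions'' alone does not suffice as the inner measure; you also need to track misplaced $\alpha$--$\delta$ pairs, which you gesture at but would need to make precise). The paper's version is shorter and safer but hides the bookkeeping inside the filtration formalism. For the reversed ordering, both the symmetric bubble-sort and the associated-graded argument apply directly; appealing to $A^{\opp}$ is an unnecessary detour since \eqref{mdelta} is already the two-sided commutation rule you need.
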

\begin{proof}
It follows from relations \eqref{alghom} and part (ii) of Lemma~\ref{L:extendedrels} that the algebra $A^e$ is generated by the set
$\{\alpha(a_i) , \delta(x_j) \mid i \in I, j\in J\}$. 
Therefore the lemma will follow from the claim that $\gr Z^e$ is central in $\gr A^e$.
This is clear upon examining the top graded components of relations \eqref{liehom} and \eqref{mdelta}. 
\end{proof}

We now record the universal property of $A^e$ which allows us to view Poisson $A$-modules as $A^e$-modules.
Consider the category $\U$ whose objects are triples $(B, \alpha', \delta')$ where $B$ is an associative algebra
with unital algebra homomorphism $\alpha' : A \rightarrow B$ and Lie algebra homomorphism $\delta' : Z \rightarrow B$
satisfying \eqref{mdelta} and \eqref{deltaxy}, and where the morphisms
$(B, \alpha', \delta') \rightarrow (C, \alpha'', \delta'')$ between two objects in $\U$ are the algebra homomorphisms
$\beta : B\rightarrow C$ making the diagrams below commute.
$$
\begin{tikzpicture}[node distance=1.5cm, auto]
\pgfmathsetmacro{\shift}{0.3ex}
\node (A) {$A$};
\node (B) [right of=A] {$B$};
\node (C) [below of=B] {$C$};

\node (D) [right of= B] {$Z$};
\node (E) [right of=D] {$B$};
\node (F) [below of=E] {$C$};

\draw[->] (A) --(B) node[above,midway] {$\alpha'$};
\draw[->] (A) --(C) node[left,midway] {$\alpha''$};
\draw[->] (B) --(C) node[right,midway] {$\beta$};

\draw[->] (D) --(E) node[above,midway] {$\delta'$};
\draw[->] (D) --(F) node[left,midway] {$\delta''$};
\draw[->] (E) --(F) node[right,midway] {$\beta$};

\end{tikzpicture}
$$
\begin{Lemma}
\label{L:categoryequivalence}
Let $A$ be a Poisson $Z$-order.
\begin{enumerate}
\item{$(A^e, \alpha, \delta)$ is an initial object in the category $\U$;}
\item{There is a category equivalence
$$A\PMod \cong A^e\Mod.$$
If $(M, \nabla)$ is a Poisson $A$-module then it becomes a $A^e$-module by defining
\begin{eqnarray}
\label{e:alph} \alpha(a) m := am;\\
\label{e:delt} \delta(x) m := \nabla(x) m,
\end{eqnarray}
for $x \in Z$, $a\in A$ and $m \in M$. Conversely if $M$ is a $A^e$-module then \eqref{e:alph}, \eqref{e:delt} make $M$ in to a Poisson $A$-module.
Consequently the $A^e$-module homomorphisms are precisely the Poisson $A$-module homomorphisms.}
\end{enumerate}
\end{Lemma}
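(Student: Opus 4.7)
The plan is to exploit the fact that the defining relations \eqref{alghom}--\eqref{deltaxy} of $A^e$ are precisely a transcription of the axioms (i)--(iii) in the definition of a Poisson $A$-module. Part (1) and part (2) are formally quite similar: in both cases one produces a ring homomorphism out of $A^e$ (into $B$ in the first case, into $\End_\k(M)$ in the second case) by invoking the universal property that $A^e$ enjoys as an algebra presented by generators and relations.

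For part (1) I would start from the free associative $\k$-algebra on the generating set \eqref{e:Aegens} and observe that the unique $\k$-algebra homomorphism from this free algebra to $B$ sending $\alpha(a)\mapsto \alpha'(a)$ and $\delta(z)\mapsto \delta'(z)$ descends to the quotient $A^e$, precisely because $(B,\alpha',\delta')$ is assumed to satisfy the defining relations of $A^e$. Uniqueness of the resulting morphism $A^e\to B$ is automatic from the fact that $A^e$ is generated by the images of $\alpha$ and $\delta$. Finally, since $(A^e,\alpha,\delta)$ is itself an object of $\U$ (which is nothing more than an unpacking of \eqref{alghom}--\eqref{deltaxy}), it is indeed an initial object.

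For part (2), given a Poisson $A$-module $(M,\nabla)$, I would construct the pair $(\alpha',\delta')$ with $\alpha'\colon A\to \End_\k(M)$ the structural map giving $M$ its $A$-module structure and $\delta'=\nabla$. The axioms of a Poisson $A$-module translate directly into the statement that $(\End_\k(M),\alpha',\delta')$ is an object of $\U$: axiom (i) gives \eqref{deltaxy}, axiom (ii) gives \eqref{mdelta}, and axiom (iii) together with $\k$-linearity of $\nabla$ gives \eqref{liehom}; \eqref{alghom} is automatic. By part (1) this yields a unique algebra map $A^e\to\End_\k(M)$, i.e.\ an $A^e$-module structure on $M$ extending the given $A$-action via $\alpha$. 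Conversely, given an $A^e$-module, the same relations read backwards show that defining $\nabla$ by \eqref{e:delt} produces a Poisson $A$-module structure. The two constructions are evidently mutually inverse on the underlying $\k$-vector spaces, and a $\k$-linear map $M\to N$ intertwines the $A^e$-actions if and only if it intertwines both the $A$-actions (via $\alpha$) and the $\nabla$-maps (via $\delta$), which is exactly the condition to be a Poisson $A$-module morphism, so the constructions are functorial and define an equivalence.

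There is no serious obstacle here: the argument is essentially a bookkeeping exercise translating the four relations of $A^e$ into the three axioms of a Poisson $A$-module plus the $A$-module structure. The only point that merits care is making sure one does not confuse the roles of the maps $\alpha$ and $\delta$ (for instance, in the identity $\delta(xy)=\alpha(x)\delta(y)+\alpha(y)\delta(x)$ the right-hand side involves the image of $x$ under $\alpha$, \emph{not} $\delta$), and that the check of \eqref{mdelta} requires using that $M$ is already a module for the algebra structure on $A$. Once this is done the two assignments are visibly inverse to one another.
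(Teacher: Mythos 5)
Your proposal is correct and follows the same route as the paper's (very terse) proof: part (1) by the universal property of the presentation of $A^e$, and part (2) by applying part (1) with $B=\End_\k(M)$ and reading the $A^e$-relations against the Poisson-module axioms. You have simply spelled out the bookkeeping that the paper leaves implicit.
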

\begin{proof}
Part (1) is an immediate consequence of the definition of a Poisson $A$-module, whilst part (2) follows directly from part (1).
\end{proof}
\begin{Remark}
We may now define the category $A\Pmod$, of \emph{finitely generated Poisson $A$-modules}, to be the essential image of $A^e\mod$ in $A\PMod$ under the above equivalence.
\end{Remark}

\begin{Corollary}
\label{C:alphainj}
The map $\alpha : A \rightarrow A^e$ is injective.
\end{Corollary}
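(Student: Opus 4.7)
The plan is to exhibit a Poisson $A$-module on which $\alpha(a)$ acts nontrivially whenever $a \neq 0$, so that the universal property of $A^e$ forces $\alpha$ to be injective. The natural candidate is $A$ itself, viewed as a left $A$-module via multiplication.

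First I would verify that $A$ is a Poisson $A$-module with structure map $\nabla(z)a := \{z,a\}$. Axiom (i) is the Leibniz identity $\{xy,a\} = x\{y,a\} + y\{x,a\}$, which holds since $\{\cdot,\cdot\}\colon Z\times A \to A$ is a biderivation (in fact this is condition (ii) in the definition of a Poisson order). Axiom (ii) is a direct computation using the derivation property of $\{x,\cdot\}$: for $m \in A$,
\[
[\nabla(x),a]m \;=\; \{x,am\} - a\{x,m\} \;=\; \{x,a\}m,
\]
which is exactly $\{x,a\}m$. Axiom (iii) is the Jacobi identity for the Poisson bracket on $Z$ extended to $A$, noting that $Z$ acts on $A$ by Hamiltonian derivations.

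Having established that $(A,\nabla)$ is a Poisson $A$-module, Lemma~\ref{L:categoryequivalence}(2) endows $A$ with the structure of an $A^e$-module for which $\alpha(a)$ acts as left multiplication by $a$ and $\delta(z)$ acts as $\{z,-\}$. Now suppose $\alpha(a) = 0$ in $A^e$; then $\alpha(a)$ acts as $0$ on the $A^e$-module $A$. Applying this to $1_A$ yields $a = a\cdot 1_A = 0$, proving injectivity.

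There is no real obstacle here: the entire argument reduces to checking that $A$ is a Poisson module over itself, which is immediate from the Poisson order axioms, and then invoking the already-established equivalence $A\PMod \simeq A^e\Mod$. The point is simply to observe that the module category is rich enough to separate points of $A$ under $\alpha$.
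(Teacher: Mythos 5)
Your proof is correct and is essentially the same as the paper's: the paper also takes $A$ itself as a Poisson $A$-module (citing this from the examples in \textsection\ref{S:POexamples}), passes to the induced $A^e$-module structure $\rho : A^e \to \End_\k(A)$, and observes that $x \mapsto \rho(x)1_A$ is a left inverse to $\alpha$, which is exactly your evaluation-at-$1_A$ argument. The only difference is that you spell out the verification of the Poisson-module axioms rather than citing the earlier example.
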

\begin{proof}
Recall from Example~\ref{S:POexamples} that $A$ is a Poisson $A$-module, hence an $A^e$-module. This induces a homomorphism $\rho : A^e \rightarrow \End_\k(A)$.
Now it suffices to notice that the map $\rho_1 : A^e \rightarrow A$ given by $\rho_1(x) := \rho(x) 1_A$ is a left inverse to $\alpha$.
\end{proof}
\begin{Remark}
\label{R:relsremark}
Thanks to the lemma we may (and shall) identify $A$ with a subalgebra of $A^e$. Thus we view $A^e$ as the algebra generated by $A$ and $\delta(Z)$
subject to relations \eqref{liehom}, \eqref{mdelta}, \eqref{deltaxy} with every instance of $\alpha(a)$ replaced by $a$.
\end{Remark}

\subsection{Localisation of Poisson $A$-modules}
\label{S:localisation}
It is well known that if $S\subseteq Z$ is any multiplicative subset containing no zero divisors then the localisation $Z_S := Z[S^{-1}]$
carries a unique Poisson algebra structure such that the natural map $Z \rightarrow Z_S$ is a Poisson algebra homomorphism.
Briefly, this structure is defined by extending the Hamiltonian derivations to $Z_S$ via \eqref{leibniz}. The reader
may refer to the proof of \cite[Lemma~1.3]{Ka} for the precise formula.
In the same manner, when a multiplicative set $S\subseteq Z$ consists of non-zero divisors of $A$ the algebra $A_S := A \otimes_Z Z_S$
carries a unique structure of a $Z_S$-Poisson order and $A_S$ is a faithful $Z_S$-module.
Let $A_S^e$ denote the  Poisson enveloping algebra of $A_S$.
\begin{Lemma} 
\label{L:localenvalg}
If $S \subseteq Z$ is any multiplicative subset then the natural map $Z_S \otimes_Z A^e \rightarrow A_S^e$ induced by multiplication is surjective.
\end{Lemma}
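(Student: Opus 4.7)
The plan is to identify the image of the multiplication map with the left $Z_S$-submodule of $A_S^e$ generated by the image of the canonical algebra homomorphism $\phi:A^e\to A_S^e$ furnished by the universal property (Lemma~\ref{L:categoryequivalence}) applied to the compositions $A\to A_S\hookrightarrow A_S^e$ and $Z\to Z_S\hookrightarrow A_S^e$. Under this identification the map sends $(z/s)\otimes u\mapsto\alpha(z/s)\phi(u)$, and its image is $N:=\alpha(Z_S)\cdot\phi(A^e)\subseteq A_S^e$. It therefore suffices to show $N=A_S^e$, which I would do by checking that $N$ is a $\k$-subalgebra of $A_S^e$ containing every generator.

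For the generators: any $\alpha(a/s)$ with $a\in A$ and $s\in S$ factors as $\alpha(1/s)\alpha(a)$, manifestly in $N$. For $\delta(x/t)$ with $x\in Z$ and $t\in S$, relation~\eqref{deltaxy} applied to the factorisation $(x/t)\cdot t=x$ in $Z_S$ gives $\delta(x)=\alpha(x/t)\delta(t)+\alpha(t)\delta(x/t)$; since $\alpha(t)$ is invertible in $A_S^e$ with inverse $\alpha(1/t)$, rearranging yields
$$\delta(x/t)=\alpha(1/t)\delta(x)-\alpha(x/t^{2})\delta(t),$$
and both $\delta(x)$ and $\delta(t)$ lie in $\phi(A^e)$.

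The only substantive step is closure of $N$ under multiplication, which reduces to showing $u\alpha(\xi)\in N$ for every $u\in A^e$ and every $\xi\in Z_S$. Using the generating set of Lemma~\ref{L:envgenset}, it is enough to treat monomials $w=\alpha(a_1)\cdots\alpha(a_n)\delta(z_1)\cdots\delta(z_m)$ with $a_i\in A$ and $z_j\in Z$, and I would induct on $m$. When $m=0$, centrality of $Z_S$ in $A_S$ (a consequence of $Z\subseteq Z(A)$, built into the definition of a $Z$-algebra) makes $\alpha(\xi)$ commute with every $\alpha(a_i)$, so $w\alpha(\xi)=\alpha(\xi)w\in N$. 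For the inductive step, relation~\eqref{mdelta} yields
$$\delta(z_m)\alpha(\xi)=\alpha(\xi)\delta(z_m)+\alpha(\{z_m,\xi\}),$$
where $\{z_m,\xi\}\in Z_S$ by the standard extension of the Poisson bracket to the localisation. This rewrites $w\alpha(\xi)$ as $w'\alpha(\xi)\delta(z_m)+w'\alpha(\{z_m,\xi\})$, with $w'=\alpha(a_1)\cdots\alpha(a_n)\delta(z_1)\cdots\delta(z_{m-1})$ involving only $m-1$ factors of $\delta$. Applying the inductive hypothesis to both $w'\alpha(\xi)$ and $w'\alpha(\{z_m,\xi\})$, and observing that right-multiplication by $\delta(z_m)\in A^e$ preserves $N$, completes the induction. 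The only real subtlety is keeping track of this recursion cleanly, and the Poisson-bracket term that appears at each stage is handled precisely because the bracket extends to $Z_S$; once the induction is in place, $N$ is a $\k$-subalgebra of $A_S^e$ containing every generator, so $N=A_S^e$ as required.
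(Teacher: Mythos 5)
Your proof is correct and follows the same route as the paper: both hinge on the rewriting formula $\delta(x/t)=t^{-1}\delta(x)-xt^{-2}\delta(t)$ obtained from relation~\eqref{deltaxy}, together with the spanning set supplied by Lemma~\ref{L:envgenset}. You have merely made explicit the closure-under-multiplication step (showing $N=\alpha(Z_S)\,\phi(A^e)$ is a $\k$-subalgebra via the induction on $\delta$-factors and relation~\eqref{mdelta}), which the paper's terse final sentence leaves implicit.
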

\begin{proof}
By relation \eqref{deltaxy}, for $a\in A$ and $s\in S$ we have
$\delta(a) = \delta(as^{-1} s) = as^{-1}\delta(s) + s\delta(as^{-1})$
which can be rewritten as $\delta(as^{-1}) = s^{-1}\delta(a) - as^{-2}\delta(s).$
Applying Lemma~\ref{L:envgenset}, this shows that the desired map surjects.
\end{proof}

From the generators and relations of $A^e$ we see there is a natural map $A^e \rightarrow A_S^e$ sending generators of $A^e$
to the elements in $A^e_S$ with the same name.
Now if $M$ is any Poisson $A$-module and $S \subseteq Z$ is a multiplicative subset then we can define the localisation $M_S$
by viewing $M$ as an $A^e$-module, and then defining
\begin{eqnarray}
M_S := A^e_S \otimes_{A^e} M.
\end{eqnarray}
This is an $A^e_S$-module and thus it is a Poisson $A_S$-module, via the equivalence described in Lemma~\ref{L:categoryequivalence}, (2).
The \emph{torsion subset of $M$} is defined by
\begin{eqnarray}
T(M) := \{z \in Z\mid zm = 0 \text{ for some } m \in M\}.
\end{eqnarray}
\begin{Proposition}
\label{P:simplepoisson}
If $M$ is a simple Poisson $A$-module then $M_S$ is either zero or a simple Poisson $A_S$-module.
Furthermore $M_S$ is zero if and only if $$S \cap T(M) \neq \emptyset.$$
\end{Proposition}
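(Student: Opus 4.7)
My plan is to identify $M_S$ with the classical $A$-module localization $S^{-1}M$, equipped with an enhanced Poisson $A_S$-module structure, and then reduce both assertions to a dichotomy for the $S$-torsion submodule of $M$.

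First, using Lemma~\ref{L:localenvalg} and the fact that $S \subseteq Z$ is central in $A$, I would observe that every element of $M_S = A_S^e \otimes_{A^e} M$ can be written in the form $s^{-1} \otimes m$ with $s \in S$ and $m \in M$. Such a representative vanishes whenever some $t \in S$ annihilates $m$, since $s^{-1} \otimes m = s^{-1}t^{-1} \otimes tm = 0$. In particular, if every element of $M$ is killed by some element of $S$, then $M_S = 0$.

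The key step is to show that the $S$-torsion subset $T_S(M) := \{m \in M : sm = 0 \text{ for some } s \in S\}$ is a Poisson $A$-submodule. $A$-stability is immediate from the centrality of $Z$ in $A$, and the Poisson stability uses axiom~(ii) of a Poisson module: if $sm = 0$ then $s^2 \nabla(x)(m) = -s\{x,s\}m = -\{x,s\}\cdot sm = 0$, so $\nabla(x)(m) \in \ker_M(s^2) \subseteq T_S(M)$. By simplicity of $M$, either $T_S(M) = M$ (giving $M_S = 0$ and $S \cap T(M) \neq \emptyset$ from any nonzero $m \in M$) or $T_S(M) = 0$. In the latter case I would endow $S^{-1}M$ with a Poisson $A_S$-module structure through the Leibniz-type formula $\delta(x)(s^{-1}m) := s^{-1}\nabla(x)(m) - s^{-2}\{x,s\}m$ for $x \in Z$, and then use the universal property of Lemma~\ref{L:categoryequivalence} to construct mutually inverse $A_S^e$-module maps between $M_S$ and $S^{-1}M$. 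This identifies the two; in particular $M$ embeds into $M_S$, so $M_S \neq 0$, while $S \cap T(M) = \emptyset$ trivially.

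For the simplicity of $M_S$ in the nonzero case, let $N \subseteq M_S$ be a nonzero Poisson $A_S$-submodule. Given a nonzero $s^{-1} \otimes m \in N$, the $Z_S$-stability of $N$ gives $1 \otimes m = s \cdot (s^{-1} \otimes m) \in N$ with $m \neq 0$, so $N \cap M$ is a nonzero Poisson $A$-submodule of $M$ (under the embedding $M \hookrightarrow M_S$) and hence equals $M$ by simplicity. Multiplying by $Z_S$ yields $N \supseteq S^{-1}M = M_S$.

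The main obstacle is the identification $M_S \cong S^{-1}M$ in the middle step: one must check that the prescribed formula for $\delta(x)$ satisfies the three Poisson-module axioms, and that the two natural maps produced by the universal property are mutually inverse. This demands some careful bookkeeping but introduces no essentially new ideas. Once the identification is established, the remaining arguments reduce to the simplicity of $M$, the submodule-intersection trick, and routine facts from localization theory.
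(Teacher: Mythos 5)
Your proof is correct and, while it starts from the same observation (elements of $M_S$ are $s^{-1}\otimes m$, via Lemma~\ref{L:localenvalg}), it departs from the paper's argument at two significant points. For the dichotomy $M_S=0$ versus $M_S\neq 0$, the paper cites Bourbaki~\cite[Ch.~II, \S 2, No.~4, Prop.~10(ii)]{Bo} for the injectivity criterion, whereas you show directly that the $S$-torsion set $T_S(M)$ is a Poisson $A$-submodule (the computation $s^2\nabla(x)m = -\{x,s\}\cdot sm = 0$ is exactly right) and then manufacture the Poisson $A_S$-module structure on $S^{-1}M$ by the Leibniz formula, obtaining the isomorphism $M_S\cong S^{-1}M$ from the universal property. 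This is longer because you must verify the Poisson axioms for the extended structure, but it is arguably more watertight: the Bourbaki reference concerns the commutative localization $S^{-1}M$, and a priori $M_S$ is only a quotient of it via the surjection from Lemma~\ref{L:localenvalg}, so the paper is implicitly using precisely the identification you construct. For simplicity of $M_S$, the paper picks a nonzero element and shows it generates, using that any $n\in M$ is of the form $am$ for $a\in A^e$; you instead take an arbitrary nonzero Poisson $A_S$-submodule $N$, intersect with the image of $M$, and clear denominators with $Z_S$. Both arguments are sound; the paper's is a couple of lines shorter, yours trades that for a more structural statement. One small thing to tighten in the write-up: when invoking the universal property to get the two inverse maps, you should note explicitly that the surjection $S^{-1}M\twoheadrightarrow M_S$ is an $A_S^e$-module map once $S^{-1}M$ carries the Poisson structure you define, which follows from the identity $\delta(x)s^{-1}=s^{-1}\delta(x)-s^{-2}\{x,s\}$ in $A_S^e$.
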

\begin{proof}
Since $M$ is a Poisson $A$-submodule of $M_S$ it follows that the kernel of $M \rightarrow M_S$ is a Poisson $A$-submodule.
Since $M$ is simple we have $M_S \neq 0$ if and only if $\Ker(M \rightarrow M_S) = 0$, and it follows from \cite[Ch. II, \textsection~2, No. 4, Proposition~10(ii)]{Bo} the natural map
$M \rightarrow M_S$ is injective if and only if $S\cap T(M) = \emptyset$.
Suppose that $S\cap T(M) = \emptyset$ so that $M_S \neq 0$. Then according to Lemma~\ref{L:localenvalg} we have a surjection
$Z_S \otimes_Z A^e \otimes_{A^e} M \twoheadrightarrow M_S$. In other words $M_S$ is spanned by expressions $s^{-1} m$ with $s \in S$ and $m \in M$.
The following calculation shows that every element of $M_S$
can be written in the form $s^{-1}m$: for $s_1,...,s_n \in S$ and $m_1,...,m_s \in M$ we have
$$\sum_{i=1}^n s_i^{-1} m_i = (\prod_{i=1}^n s_i)^{-1} \sum_{i=1}^n s_1 \cdots \hat s_i \cdots s_n m_i.$$
 Now we use the following characterisation of simple $A^e$-modules: they are precisely the modules which are generated by
any non-zero element. Pick $0 \neq s^{-1}m \in M_S$ and let $t^{-1} n$ be any other element. Since $M$ is a simple $A^e$-module there
is $a \in A^e$ such that $am = n$, and it follows that $t^{-1}as(s^{-1}m) = t^{-1}n$. Hence $M_S$ is generated as an $A_S^e$-module
by any nonzero element, and so $M_S$ is a simple Poisson $A_S$-module as required.
\end{proof}

\begin{Remark}
When $\p \in \Spec(Z)$ we adopt the usual convention of writing $A_\p$ and $Z_\p$ for the localisations $A_{S\setminus \p}$ and $Z_{S\setminus \p}$.
When $z \in Z \setminus \{0\}$ is not nilpotent we write $A_z$ and $Z_z$ for the localisations at the multiplicative set $\{z^i \mid i \geq 0\}$.
\end{Remark}

\subsection{A Poincar\'{e}--Birkhoff--Witt theorem for the enveloping algebra}
\label{S:PBWproof}

Our present goal is to describe the associated graded algebra $\gr A^e$ with respect to the PBW filtration \eqref{e:PBWfilt}
in the case where $Z$ is a regular Poisson algebra, i.e. when $\Spec(Z)$ is a smooth affine variety.
Let $\Omega := \Omega_{Z/\k}$ denote the $Z$-module of K\"{a}hler differentials for $Z$; see \cite[Ch. II, \textsection 8]{Ha} for an overview.
The relations in the enveloping algebra imply that there is a natural map $A \otimes_Z S_Z(\Omega) \rightarrow \gr A^e$ which is surjective.
The PBW theorem for Poisson orders takes the following form.
\begin{Theorem}
\label{T:PBWenvalg}
Suppose that $Z$ is a regular, affine Poisson algebra over an algebraically closed field $\k$. The following hold:
\begin{enumerate}
\item{The natural surjective algebra homomorphism
$$A\otimes_Z S_Z(\Omega) \xtwoheadrightarrow{} \gr A^e$$
is an isomorphism;}
\item{There is an isomorphism of $(A, Z^e)$-bimodules $$A \otimes_Z Z^e \overset{\sim}{\longrightarrow} A^e;$$}
\item{There is an isomorphism of $(Z^e, A)$-bimodules $$Z^e\otimes_Z A \overset{\sim}{\longrightarrow}A^e.$$}
\end{enumerate}
\end{Theorem}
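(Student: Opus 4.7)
I would prove (2) first and deduce (1) and (3) from it. The multiplication map $\mu : A\otimes_Z Z^e \to A^e$, $a\otimes u \mapsto a\cdot u$, is a surjective $(A, Z^e)$-bimodule homomorphism by Lemma~\ref{L:envgenset}; to show it is injective I would construct an inverse using the universal property of $A^e$ from Lemma~\ref{L:categoryequivalence}(1).

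\textbf{Smash product construction.} Since $A$ is a Poisson $A$-module over itself, and hence by restriction a Poisson $Z$-module, it carries a natural left $Z^e$-module structure in which $\alpha(z)$ acts as multiplication by $z$ and $\delta(z)$ acts as the Hamiltonian derivation $H(z)\in \Der_\k(A)$. Imitating Rinehart's smash-product construction, I would endow $B := A\otimes_Z Z^e$ with the unique associative $\k$-algebra structure characterised by $(a\otimes 1)(1\otimes u) = a\otimes u$ together with the commutation rule
\[(1\otimes \delta(z))(a\otimes 1) \;=\; a\otimes \delta(z) + H(z)a\otimes 1,\]
extended to arbitrary $u\in Z^e$ by induction on the PBW filtration of $Z^e$. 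The maps $\alpha'(a) := a\otimes 1$ and $\delta'(z) := 1\otimes \delta(z)$ then visibly satisfy \eqref{alghom}--\eqref{deltaxy}, and the universal property of Lemma~\ref{L:categoryequivalence}(1) produces a $\k$-algebra homomorphism $A^e\to B$ which is inverse to $\mu$ on generators, proving~(2). Since both $\mu$ and its inverse preserve the natural filtrations (with $\delta(z)$ and $1\otimes \delta(z)$ placed in degree $1$), (2) is in fact a strict filtered isomorphism.

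\textbf{Deducing (1) and (3).} For~(1), I would invoke Rinehart's classical PBW theorem \cite{Ri} for the Lie--Rinehart algebra $(Z, \Omega_{Z/\k})$: this applies because $Z$ regular and affine implies that $\Omega_{Z/\k}$ is projective of finite rank, giving $\gr Z^e \cong S_Z(\Omega_{Z/\k})$. The graded pieces of the PBW filtration of $Z^e$ are therefore $Z$-projective, so the filtration splits term-by-term and yields $Z^e \cong S_Z(\Omega_{Z/\k})$ as $Z$-modules. Applying $\gr$ to the filtered isomorphism in~(2) then gives $\gr A^e \cong A\otimes_Z S_Z(\Omega_{Z/\k})$, which is~(1). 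Statement~(3) follows by applying (2) to the opposite Poisson order $A^\opp$, which is a Poisson order over $Z$ by Example~(2) of~\textsection\ref{S:POexamples}, together with the natural identification $(A^\opp)^e \cong (A^e)^\opp$.

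\textbf{Main obstacle.} The principal technical difficulty is verifying that the smash multiplication on $B$ is well-defined over $\otimes_Z$ and associative. This is a delicate but mechanical exercise that uses the Poisson-order axioms~(i)--(ii) together with relations \eqref{liehom}--\eqref{deltaxy} inside $Z^e$, and is the direct analog for Poisson orders of Rinehart's original construction in the commutative setting. All remaining steps, including the verification of the universal relations in $B$ and the strictness of the filtered isomorphism, are routine once this associativity is established.
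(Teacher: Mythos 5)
Your proof takes a genuinely different route from the paper's. The paper proves part (1) first: it first treats the case where $\Omega_{Z/\k}$ is free by invoking Rinehart's module construction for $S_Z(\Omega)$ (Lemma~\ref{Haminsym}) and Lemma~\ref{L:tensormodulelemma}, obtaining an $A$-equivariant map $\gr A^e\to A\otimes_Z S_Z(\Omega)$ that witnesses linear independence of the ordered monomials; it then deduces the general regular case by localising at each prime (using that $\Omega_{Z_\p/\k}$ is free) and embedding into the product of localisations. Parts (2) and (3) are then obtained as easy corollaries of (1). You reverse this entirely: you build (2) directly by equipping $A\otimes_Z Z^e$ with a smash-product multiplication (equivalently, by exploiting that $Z^e$ is a left bialgebroid over $Z$ acting on $A$ as a module algebra) and invoking the universal property to construct the inverse, then deduce (1) by passing to associated graded and inputting Rinehart's PBW theorem for $Z^e$ itself together with projectivity of the graded pieces to split the filtration term-by-term, and obtain (3) via $A^\opp$ and the identification $(A^\opp)^e\cong(A^e)^\opp$ (which needs $\delta\mapsto-\delta$). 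This is a sound alternative, and arguably more structural since it avoids the localisation diagram. Two things are worth flagging. First, the well-definedness and associativity of the smash product is where all the real work lies, and this is not quite as automatic as "mechanical": in particular you must verify $[H(x),H(y)]=H(\{x,y\})$ as operators on $A$ to see that $\alpha,\delta$ land in $\End_\k(A)$ respecting \eqref{liehom}, and the paper's Poisson-order axioms (i)--(ii) in \textsection\ref{S:poissonorders} do not literally state this Jacobi compatibility — it is implicit (it is exactly axiom (iii) for $A$ as a Poisson $A$-module) but should be recorded. Second, the smash multiplication appears to be well-defined independently of regularity; regularity enters your argument only in deducing (1) from (2) via Rinehart's theorem for $Z^e$ and the term-wise splitting. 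That is a feature, not a bug — your proof of (2) and (3) is, in effect, slightly more general than the paper's, which threads all three statements through the regularity-dependent proof of (1).
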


The proof will occupy the rest of the current subsection. The approach is modelled on that of \cite{Ri} where a similar result was proven for
enveloping algebras of Lie--Rinehardt algebras.
We first prove the theorem in the case where $\Omega$ is a finitely generated free $Z$-module and then use localisation of Poisson orders
to deduce the theorem in the case where $\Omega$ is locally free, i.e. projective. By \cite[Theorem~8.15]{Ha} we know that $\Omega$ is a projective $Z$-module
if and only if $Z$ is regular, from which we will conclude the theorem.

Suppose that $\Omega$ is a free $Z$-module of finite type, so there exist $z_1,...,z_n \in Z$ such that $d(z_1),...,d(z_n)$ is a basis for $\Omega$.
Therefore the symmetric algebra $S_Z(\Omega)$ is free over $Z$ and the ordered products $d(z_I) := d(z_{i_1}) \cdots d(z_{i_m})$
with $i_1\leq \cdots \leq i_m$ provide a basis. When $I$ is a sequence $1\leq i_1 \leq \cdots \leq i_m \leq n$ we write $|I| = m$ and write
$j \leq I$ if $j \leq i_1$.
\begin{Lemma}\label{Haminsym}
Let $\Omega$ be a free $Z$-module with a finite basis. There is a Poisson $A$-module structure on $A\otimes_Z S_Z(\Omega)$ such that 
\begin{eqnarray}\label{delz}
\delta(z_j) (1\otimes d(z_I)) = 1\otimes d(z_j)d(z_I)
\end{eqnarray}
whenever $j \leq I$.
\end{Lemma}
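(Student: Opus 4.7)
My plan is to reduce the lemma to the case $A = Z$ by invoking Lemma~\ref{L:tensormodulelemma}. Specifically, I first construct a Poisson $Z$-module structure on $N := S_Z(\Omega)$, and then take $B = Z$ in that lemma to obtain a Poisson $A$-module structure on $A \otimes_Z N \cong M$ satisfying the prescription. Since $\Omega$ is free over $Z$ on the basis $\xi_j := d(z_j)$, we identify $N$ with the polynomial ring $Z[\xi_1,\ldots,\xi_n]$. To build the $Z^e$-module structure on $N$, I appeal to the universal property of $Z^e$ (Lemma~\ref{L:categoryequivalence}): it is enough to produce $\k$-linear operators $\delta'(z) \in \End_\k(N)$ for each $z \in Z$ such that $\delta': Z \to \End_\k(N)$ is a Lie algebra homomorphism satisfying $[\delta'(z), g] = \{z, g\}$ and $\delta'(xy) = x\delta'(y) + y\delta'(x)$.

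My approach is to define $\delta'(z_j)$ attached to the distinguished generators first, by induction on symmetric degree. On degree zero, set $\delta'(z_j)(g) := \{z_j,g\} + g\xi_j$ for $g \in Z$. On a monomial $\xi^I = \xi_{i_1}\cdots\xi_{i_m}$ with $i_1 \leq \cdots \leq i_m$: when $j \leq i_1$, the prescription $\delta'(z_j)(\xi^I) = \xi_j\xi^I$ is imposed directly; when $j > i_1$, write $\xi^I = \xi_{i_1}\xi^{I'}$ and use the formula forced by the desired Lie bracket relation $[\delta'(z_j), \delta'(z_{i_1})] = \delta'(\{z_j,z_{i_1}\})$, which expresses $\delta'(z_j)(\xi^I)$ in terms of quantities of strictly smaller symmetric degree. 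For a general $z \in Z$, I extend $\delta'$ using the Leibniz rule \eqref{deltaxy} together with the expansion $d(z) = \sum_k \partial_k(z)\,\xi_k$ coming from the chosen basis.

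The main obstacle is verifying that these operators actually satisfy the Lie algebra homomorphism condition $\delta'(\{x,y\}) = [\delta'(x), \delta'(y)]$ and the Leibniz-type relation $\delta'(xy) = x\delta'(y) + y\delta'(x)$ on all of $N$, not just on low-degree elements. This will be proven by induction on the symmetric degree, with the inductive step resting on the Jacobi identity for the Poisson bracket on $Z$ and careful bookkeeping of the recursion used when passing across basis generators in a non-standard order. This step is structurally analogous to Rinehart's verification \cite{Ri} that the universal enveloping algebra of a Lie--Rinehart algebra acts on the symmetric algebra, and in fact the desired Poisson $Z$-module structure on $N$ is essentially the one furnished by Rinehart's construction applied to the Lie algebroid $\Omega$ with anchor $d(f) \mapsto H(f)$ and bracket $[d(f), d(g)] = d\{f,g\}$.

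Once the Poisson $Z$-module structure on $N$ is in hand, Lemma~\ref{L:tensormodulelemma} applied with $B = Z$ immediately produces the desired Poisson $A$-module structure on $M = A \otimes_Z N$. The prescribed formula $\delta(z_j)(1 \otimes d(z_I)) = 1 \otimes d(z_j)d(z_I)$ for $j \leq I$ then follows directly by unwinding the definition of the $A$-module structure from Lemma~\ref{L:tensormodulelemma} in the case $a = 1$, since the $Z$-module level action of $\delta'(z_j)$ on $\xi^I$ with $j \leq I$ is $\xi_j \xi^I$ by construction.
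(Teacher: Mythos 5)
Your proof takes essentially the same route as the paper: both reduce to the case $A = Z$ by invoking Lemma~\ref{L:tensormodulelemma} with $B = Z$, and both construct the Poisson $Z$-module structure on $S_Z(\Omega)$ via Rinehart's method for the Lie--Rinehart algebra $(Z,\Omega)$ (Huebschmann's observation). The only difference is that the paper cites \cite[Theorem~3.1]{Ri} for the existence of this structure, whereas you sketch the underlying inductive construction before acknowledging it is Rinehart's.
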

\begin{proof}
It was first observed by Huebschmann that when $Z$ is a Poisson algebra $\Omega$ carries a natural Lie algebra structure
and that $(Z, \Omega)$ is a Lie--Rinehardt algebra; see \cite[Theorem~3.11]{Hue}. Therefore we may apply the first part of
the proof of \cite[Theorem~3.1]{Ri} to deduce that $S_Z(\Omega)$ carries a Poisson $Z$-module structure satisfying \eqref{delz}
provided $\Omega$ is free. Using Lemma~\ref{L:tensormodulelemma} we see that $A \otimes_Z S_Z(\Omega)$ carries
the required Poisson $A$-module structure.

\end{proof}

\begin{proofofPBW}
We start by proving the statement of part (1) of the Theorem, however for the moment we replace the hypothesis that $Z$ is regular with
the assumption that $\Omega$ is a free $Z$-module of finite rank. We adopt the notation introduced preceding Lemma~\ref{Haminsym}
so that $z_1,...,z_n$ is a basis for $\Omega$ over $Z$, and we write
$$\odelta(z_I) := \delta(z_1)^{i_1} \cdots \delta(z_n)^{i_n} + \F_{\sum i_j - 1} A^e \in \gr A^e.$$
We have $A = \F_0 A^e \cong \F_0 A^e / \F_{-1} A^e \subseteq \gr A^e$ and so $\gr A^e$ is a left $A$-module.
We need to show that the set
\begin{eqnarray}
\label{e:argo}
\{\odelta(z_I) \mid I \in \mathbb{Z}_{\geq 0}^n\}
\end{eqnarray}
spans a free left $A$-submodule of $\gr A^e$.
Observe that the Poisson $A^e$-module structure defined in Lemma~\ref{Haminsym}
makes $T := A \otimes_Z S_Z(\Omega)$ into a filtered $A^e$-module, and so $\gr (T) \cong T$
is a graded $\gr A^e$-module. We denote the operation $\gr A^e \otimes_\k T \rightarrow T$ by $u \otimes a \mapsto u \cdot a$.
Thanks to \eqref{delz} the map $\psi : \gr A^e \rightarrow T$ defined by $u\mapsto u\cdot (1\otimes 1)$ sends $\odelta(z_I)$
to $1\otimes d(z_1)^{i_1} \cdots d(z_n)^{i_n}$ for $I = (i_1,...,i_n)$. Since $\psi$ is $A$-equivariant and the image of \eqref{e:argo} is 
$A$-linearly independent we deduce that \eqref{e:argo} is $A$-linearly independent, as claimed. This proves part (1) in the case where
$\Omega$ is a free $Z$-module.

Now we suppose that $Z$ is regular. Then it follows from \cite[Theorem 8.15]{Ha} that $\Omega$ is a locally free
$Z$-module in the sense that there is a function $r : \Spec Z \rightarrow \N_0$ such that
$$\Omega_{Z_\p/\k} \cong Z_\p \otimes_Z \Omega_{Z/\k}\cong Z^{r(\p)}_\p$$ as $Z$-modules, for all $\p \in \Spec Z$.
By the previous paragraph we deduce that the natural map $A_\p \otimes_{Z_\p} S_{Z_\p}(\Omega_{Z_\p/\k}) \rightarrow \gr (A_\p^e)$
is an isomorphism. This shows that there is a commutative diagram of algebra homomorphisms:
\begin{center}
\begin{tikzpicture}[node distance=1.8cm, auto]
\node (A) {$A \otimes_Z S_Z(\Omega)$};
\begin{scope}[node distance=6cm and 10cm]
\node (B) [right of=A] {$\gr A^e$};
\end{scope}
\node (C) [below of=A] {$\prod_{\p \in \Spec Z}(A_\p \otimes_{Z_\p} S_{Z_\p}(\Omega_{Z_\p/\k}))$};
\begin{scope}[node distance=6cm and 10cm]
\node (D) [right of=C] {$\prod_{\p\in \Spec Z}\gr (A_\p^e)$};
\end{scope}
 \draw[>=stealth', ->>] (A) to node {$ $} (B);
 \draw[left hook-latex] (A) to node {$ $} (C);
 \draw[>=stealth', ->] (C) to node {$\sim$} (D);
 \draw[>=stealth', ->] (B) to node {$ $} (D);
\end{tikzpicture}
\end{center}
We point out that the natural map $A \otimes_Z S_Z(\Omega) \to \prod_{\p \in \Spec Z} A_\p \otimes_{Z_\p} S_{Z_\p}(\Omega_{Z_\p/\k})$ is injective: this is a special case of the very general statement that a $Z$-module $M$ embeds in the product of the localisations over $\Spec Z$. We deduce from the diagram that the natural map $A\otimes_Z S_Z(\Omega) \xtwoheadrightarrow{} \gr A^e$ is an injection, hence an isomorphism as required.

We now prove (2). There is a surjective homomorphism of $(A, Z^e)$-bimodules
\begin{eqnarray*}
\phi &:& A \otimes_Z Z^e \longrightarrow A^e;\\
& & a \otimes u \longrightarrow a u.
\end{eqnarray*}
Here we view $A$ as a subalgebra of $A^e$ as explained in Remark~\ref{R:relsremark} 
and $Z^e \rightarrow A^e$ is the map described in Lemma~\ref{L:psilemma}. The kernel of $\phi$ is an $A$-linear
dependence between the ordered monomials $\delta(z_I)$ in $A^e$ but by part (1) we know that all
such dependences are trivial, whence (2). Part (3) follows by a symmetrical argument.$\hfill \qed$
\end{proofofPBW}

We now list some results which follow easily from Theorem~\ref{T:PBWenvalg}. We thank the referee for pointing out the proof of freeness in part (iv) of the following result.
\begin{Corollary}
\label{C:faithfully}
Suppose that $Z$ is regular and affine. Then the following hold:
\begin{enumerate}
\item[(i)] The natural map $Z^e \rightarrow A^e$ from Lemma~\ref{L:psilemma} is an inclusion.
\item[(ii)] If $A$ is a free $Z$-module then $A^e$ is a free (left  and right) $Z^e$-module.
\item[(iii)] If $A$ is a projective $Z$-module then $A^e$ is a projective (left and right) $Z^e$-module.
\item[(iv)] $A^e$ is a free (left and right) $A$-module, hence $A^e$ is projective and faithfully flat over $A$.
\end{enumerate}
\end{Corollary}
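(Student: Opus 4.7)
The plan is to derive all four parts from the $(A, Z^e)$- and $(Z^e, A)$-bimodule identifications of $A^e$ supplied by Theorem~\ref{T:PBWenvalg}(2) and (3).

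As a preliminary I would verify that $Z^e$ is flat (in fact projective) as a $Z$-module. Applying Theorem~\ref{T:PBWenvalg}(1) in the special case $A = Z$ yields $\gr Z^e \cong S_Z(\Omega)$, and the regularity of $Z$ makes $\Omega$ finitely generated projective over $Z$, so $S_Z(\Omega)$ is $Z$-projective. A standard filtration argument—splitting the short exact sequences $0 \to \F_{n-1} Z^e \to \F_n Z^e \to S^n_Z(\Omega) \to 0$ against the projectivity of the quotient—then lifts $Z$-projectivity, and in particular flatness, from $\gr Z^e$ to $Z^e$ itself. Part (i) is now immediate: the map of Lemma~\ref{L:psilemma} factors as $Z^e \cong Z^e \otimes_Z Z \hookrightarrow Z^e \otimes_Z A \cong A^e$, where the injection comes from tensoring the faithful inclusion $Z \hookrightarrow A$ with the flat module $Z^e$ and the right-hand identification is Theorem~\ref{T:PBWenvalg}(3).

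Parts (ii) and (iii) follow from the same isomorphisms without additional work: a free (respectively projective) $Z$-generating set of $A$ extends via $A^e \cong Z^e \otimes_Z A$ to a free (respectively projective) left $Z^e$-generating set of $A^e$, and by the symmetric identification $A^e \cong A \otimes_Z Z^e$ of part (2) the same argument yields the corresponding right-module statements.

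Part (iv) is where I expect the main obstacle. From part (2), as a left $A$-module $A^e \cong A \otimes_Z Z^e$, and the filtration argument above shows this is isomorphic as an $A$-module to $A \otimes_Z S_Z(\Omega) \cong \gr A^e$ (by part (1)); since $S_Z(\Omega)$ is projective over $Z$, this is projective over $A$, which immediately secures both projectivity and the subsequent faithful flatness (once freeness is in hand). The more delicate claim is genuine freeness, because $S_Z(\Omega)$ need not be $Z$-free when the cotangent bundle of $\Spec Z$ is non-trivial. Following the referee's suggestion, I would aim to exhibit an explicit filtered $A$-basis of $A^e$: choose coherent $Z$-generators for each symmetric power $S^n_Z(\Omega)$, lift them to ordered monomials in $\delta(Z)$ inside $A^e$, and invoke the PBW isomorphism of Theorem~\ref{T:PBWenvalg}(1) to verify $A$-linear independence at every filtered degree simultaneously. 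The technical heart is organizing the lifts so that the resulting family is free rather than merely projective as an $A$-module; the right-module version is then symmetric via part (3), and faithful flatness of $A^e$ over $A$ is automatic from freeness together with the unital inclusion $A \hookrightarrow A^e$.
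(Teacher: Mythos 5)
Your treatments of (i), (ii), and (iii) are sound and run parallel to the paper: the paper's (i) also reduces to injectivity at the associated-graded level, and (ii)/(iii) are the same direct reading of the bimodule isomorphisms from Theorem~\ref{T:PBWenvalg} (the paper phrases (iii) via Hom-tensor duality rather than extending a projective generating set, but the content is identical). Note that the freeness half of (ii) does not need the regularity preliminaries you develop; it is immediate from the bimodule isomorphisms.

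Part (iv) is where your proposal has a genuine gap. You correctly observe that $A^e$ is $A$-projective, being a countable direct sum of finitely generated projective $A$-modules $A\otimes_Z S^n_Z(\Omega)$ obtained by splitting the filtration, and you correctly identify the obstruction: $S^n_Z(\Omega)$ need not be $Z$-free, so these summands need not be $A$-free. But your proposed remedy --- choosing ``coherent $Z$-generators'' of each $S^n_Z(\Omega)$, lifting them to ordered $\delta$-monomials, and verifying $A$-linear independence by PBW --- cannot succeed as stated. Linear independence of a lifted generating set for $S^n_Z(\Omega)$ over $A$ is exactly freeness of $A\otimes_Z S^n_Z(\Omega)$, which fails precisely when the projective module has no free basis; no choice of ``lifts'' cures this degree by degree. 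Freeness of the total direct sum is a global phenomenon, not a reorganisation of bases in each filtered layer. The paper obtains it from a theorem of Bass \cite[Corollary~3.2]{Ba62}: $A^e$ is a countable direct sum of finitely generated projectives over the noetherian ring $A$, and $A^e/IA^e$ fails to be finitely generated for every two-sided ideal $I\subseteq A$ (because infinitely many layers survive modulo $I$), so $A^e$ is an $\aleph_0$-big projective and therefore free. This Eilenberg-swindle-style argument is the ingredient your proposal is missing; without it, you have projectivity and faithful flatness, but not freeness.
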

\begin{proof}
The PBW theorems for $Z^e$ and $A^e$ show that the map $Z^e\rightarrow A^e$ is injective on the level of associated graded algebras, proving (i).
Part (ii) follows from parts (2) and (3) of the PBW theorem, whilst (iii) is an application of Hom-tensor duality. Part (iv) requires slightly more work, and we begin by showing that $A^e$ is a countable direct sum of projective $A$-module.
As we noted many times previously, when $Z$ is regular and affine we have $\Omega$ finitely generated and projective.
Write $S_Z(\Omega) = \bigoplus_{k\geq 0} S_Z^k(\Omega)$ for the $Z$-module decomposition into symmetric powers.
Since projective modules of finite type are retracts of finite rank free modules, and since symmetric powers $S^k_Z$ preserve
retracts and free modules of finite rank, we see that $S_Z^k(\Omega)$ is projective of finite type for all $k \geq 0$.
If $S_Z^k(\Omega) \oplus Q_k = Z^{n(k)}$ for $Z$-modules $\{Q_k\mid k \geq 0\}$ and integers $\{n(k) \in \N \mid k \geq 0\}$ then we see
$A^{n(k)} \cong A \otimes_Z Z^{n(k)} \cong (A\otimes_Z S_Z^k(\Omega)) \oplus (A \otimes_Z Q_k)$ and so $A\otimes_Z S_Z^k(\Omega)$
is a projective $A$-module. It follows that the exact sequence
$$0 \rightarrow \F_{k-1} A^e \rightarrow \F_k A^e \rightarrow A \otimes_Z S_Z^k(\Omega) \rightarrow 0$$
splits for all $k \geq 0$, which implies that $A^e$ is a direct sum of projective (left) $A$-modules, hence projective. In this last deduction we have used the fact that $\F_0 A^e \cong A$ is a projective $A$-module. A symmetrical argument shows that $A^e$ is projective also as a right $A$-module.

We have actually shown that $A^e$ is a direct sum of countably many projective $A$-modules. It follows that if $I \subseteq A$ is a two sided ideal then $A^e / IA^e$ is not finitely generated as an $A$-module. In the language of \cite{Ba62} we have that $A^e$ is an $\aleph_0$-big projective $A$-module.
By Lemma~\ref{L:noetherian} we know that $A$ is noetherian so $A^e$ satisfies the hypotheses of \cite[Corollary~3.2]{Ba62} and $A^e$ is a free left $A$-module; by symmetry it is also free as a right $A$-module. Faithful flatness follows immediately.
\end{proof}

\section{Poisson primitive ideals vs. annihilators of simple Poisson $A$-modules}
\label{S:primitiveandsimples}

In this section we shall prove parts (b) and (c) of the second main theorem which relate the Poisson primitive ideals of a Poisson order to
the annihilators of simple modules. \emph{For the rest of the paper the ground field will be the complex numbers $\C$.}

\subsection{Poisson primitive ideals are annihilators}
Let $Z$ be a complex affine Poisson algebra and let $A$ be a Poisson order over $Z$.
 We write
\begin{eqnarray*}
& & \I(A) := \{\text{ideals of }A\};\\
& & \I_{\P}(A) := \{\text{Poisson ideals of }A\};\\
& & \I_l(A^e) := \{\text{left ideals of }A^e\};\\
& & \I(A^e) := \{\text{2-sided ideals of }A^e\}.
\end{eqnarray*}
We will consider extension and contraction of ideals over the inclusion of $\C$-algebras $A \subseteq A^e$
\begin{eqnarray*}
\phi &:& \I(A) \longrightarrow \I_l(A^e);\\
& & I \longmapsto A^e I;\\
\medskip
\psi &:& \I_l(A^e) \longrightarrow \I(A);\\
& & J\longmapsto J\cap A. 
\end{eqnarray*}
\begin{Lemma}\label{idealslemma}
If $Z$ is a regular affine Poisson algebra then:
\begin{enumerate}
\item[(i)]{$\psi$ is the left inverse to $\phi$;}
\item[(ii)]{$\phi : \I_{\P}(A) \rightarrow \I(A^e)$ and $\psi : \I(A^e) \rightarrow \I_{\P}(A)$.}
\end{enumerate}
\end{Lemma}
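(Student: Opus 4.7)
The plan is to treat the two parts separately. Part (ii) will be a direct unwinding of the defining relations of $A^e$, while part (i) is the substantive assertion and rests entirely on the faithful flatness of $A^e$ over $A$ established in Corollary~\ref{C:faithfully}(iv); this is the sole place where the regularity of $Z$ enters, since faithful flatness of $A^e$ is ultimately a consequence of the PBW theorem of Section~\ref{S:PBWproof}.

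For part (i) I need to show $A^e I \cap A = I$ for every ideal $I$ of $A$. The containment $I \subseteq A^e I \cap A$ is immediate. For the reverse inclusion I would tensor the short exact sequence $0 \to I \to A \to A/I \to 0$ of left $A$-modules on the left by $A^e$, which is flat as a right $A$-module by Corollary~\ref{C:faithfully}(iv); this yields an exact sequence whose last term is $A^e \otimes_A (A/I) \cong A^e/A^e I$, so it suffices to check that the canonical map $A/I \to A^e/A^e I$ is injective. This injectivity is the standard purity property of a faithfully flat ring extension: whenever $B$ is faithfully flat as a right $A$-module and $N$ is a left $A$-module, the map $N \to B \otimes_A N$, $n \mapsto 1 \otimes n$, is injective. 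Briefly, letting $K$ be its kernel, flatness provides an inclusion $B \otimes_A K \hookrightarrow B \otimes_A N$ whose image is the left $B$-span of $\{1 \otimes k : k \in K\} = \{0\}$, whence $B \otimes_A K = 0$ and $K = 0$ by faithful flatness.

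For part (ii), let $I$ be a Poisson ideal of $A$. By Lemma~\ref{L:envgenset}, $A^e$ is generated as an algebra by $A$ together with the elements $\delta(z)$, $z \in Z$, so in order to see that $A^e I$ is a two-sided ideal it is enough to check closure under right multiplication by these generators. Closure under right multiplication by $A$ is immediate from the fact that $I$ is a right ideal of $A$, while closure under $\delta(z)$ comes from relation~\eqref{mdelta}: for $i \in I$ and $z \in Z$,
$$i\,\delta(z) \;=\; \delta(z)\, i - [\delta(z), i] \;=\; \delta(z)\, i - H(z)(i),$$
and the second term lies in $I$ precisely because $I$ is $H(Z)$-stable. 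Conversely, if $J$ is a two-sided ideal of $A^e$, then $J \cap A$ is tautologically an ideal of $A$; for any $a \in J \cap A$ and $z \in Z$ the element $H(z)(a) = [\delta(z), a]$ lies in $J$ by two-sidedness of $J$ and in $A$ since $H(z)$ is a derivation of $A$, so $J \cap A$ is $H(Z)$-stable, i.e.\ Poisson.
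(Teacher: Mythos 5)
Your proof is correct and follows essentially the same route as the paper's. For part~(i) the paper cites Corollary~\ref{C:faithfully}(iv) together with Bourbaki's purity result for faithfully flat modules, whereas you supply the (standard) purity argument explicitly — same idea, just unpacked; one small imprecision is that the identification $A^e/A^eI \cong A^e\otimes_A(A/I)$ is pure right-exactness of $\otimes$ and does not need flatness, which enters only in the purity step where you correctly use it. For part~(ii) the paper observes that $\phi(I)=IZ^e=Z^eI$ via relation~\eqref{mdelta} and that $\ad(\delta(z))$ stabilises both $J$ and $A$ while restricting to $H(z)$ on $A$; your closure-under-generators argument and your treatment of $J\cap A$ are the same computation phrased slightly differently, and both are valid.
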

\begin{proof}
The first part follows from part (iv) of Corollary~\ref{C:faithfully} and \cite[Chapter I, \S 3, No. 5, Proposition 8]{Bo}.
For $I \in \I_{\P}(A)$ we have $\phi(I) := IZ^e = Z^e I \in \I(A^e)$, thanks to relation (\ref{mdelta}) in $A^e$.
Furthermore, when $J \in \I(A^e)$ we see that the derivation $\ad(\delta(z))$ stabilises
$\psi(J) := Z\cap J \subseteq A^e$ for all $z\in Z$. Using (\ref{mdelta}) again, the latter assertion is equivalent to saying that $\psi(J) \in \I_{\P}(A)$.
\end{proof}

We now prove part (c) of the second main theorem. 
\begin{Theorem}
\label{primisann}
If $Z$ is regular and $I\in \Prim(A)$ is a primitive ideal then there exists a simple $A^e$-module $M$ such that $$\Ann_A(M) = \P(I).$$
\end{Theorem}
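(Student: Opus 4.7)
The plan is to take $I = \Ann_A(N)$ for some simple $A$-module $N$, induce up to an $A^e$-module $\widetilde{N} := A^e \otimes_A N$, show $\Ann_A(\widetilde{N}) = \P(I)$, and then cut out a simple $A^e$-submodule $M$ of a suitable quotient of $\widetilde{N}$ having the same $A$-annihilator.

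First, to embed $N$ as an $A$-submodule of $\widetilde{N}$ it suffices to show that $A$ is a direct summand of $A^e$ as a right $A$-module. Theorem~\ref{T:PBWenvalg}(3) provides an isomorphism $A^e \cong Z^e \otimes_Z A$ of $(Z^e, A)$-bimodules, and the PBW theorem for $Z^e$ (the $A=Z$ case of Theorem~\ref{T:PBWenvalg}(1)), together with projectivity of each $S_Z^k(\Omega)$ over $Z$ coming from the regularity of $Z$, splits the PBW filtration of $Z^e$. Consequently $Z$ is a $Z$-module direct summand of $Z^e$, and tensoring with $A$ on the right yields the required splitting of $A \hookrightarrow A^e$. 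This in turn makes $n \mapsto 1\otimes n$ an embedding $N \hookrightarrow \widetilde{N}$.

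For the identity $\Ann_A(\widetilde{N}) = \P(I)$: the inclusion $\Ann_A(\widetilde{N}) \subseteq I$ is immediate from $N \subseteq \widetilde{N}$, and $\Ann_A(\widetilde{N}) = \Ann_{A^e}(\widetilde{N}) \cap A$ is a Poisson ideal by Lemma~\ref{idealslemma}(ii), so $\Ann_A(\widetilde{N}) \subseteq \P(I)$. Conversely, relation~\eqref{mdelta} forces $A^e \cdot J = J \cdot A^e$ in $A^e$ for any Poisson ideal $J \subseteq A$ (as in the proof of Lemma~\ref{idealslemma}(ii)); applying this with $J = \P(I)$ gives $\P(I) \cdot \widetilde{N} = A^e \cdot (\P(I) \cdot N) = 0$ since $\P(I) \subseteq I = \Ann_A(N)$.

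The main obstacle, and the final step, is producing a simple $A^e$-module with the correct $A$-annihilator, since $\widetilde{N}$ need not itself be simple. Fix a nonzero $n_0 \in N \subseteq \widetilde{N}$ and, by Zorn's lemma, choose an $A^e$-submodule $K \subseteq \widetilde{N}$ maximal with respect to $1\otimes n_0 \notin K$. Then every nonzero $A^e$-submodule of $\widetilde{N}/K$ contains the image of $1 \otimes n_0$, so $M := A^e \cdot \overline{1 \otimes n_0}$ is the unique minimal nonzero $A^e$-submodule of $\widetilde{N}/K$, and is therefore simple. Because $N$ is simple as an $A$-module, each $1 \otimes n$ equals $a \cdot (1\otimes n_0)$ for some $a \in A$, placing the image of $N$ inside $M$; and $N \cap K = 0$ since it is a proper $A$-submodule of the simple $A$-module $N$. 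Hence $N$ embeds in $M$ as an $A$-module, which forces $\Ann_A(M) \subseteq \Ann_A(N) = I$ and therefore $\Ann_A(M) \subseteq \P(I)$ as $\Ann_A(M)$ is Poisson. Combining with $\Ann_A(M) \supseteq \Ann_A(\widetilde{N}/K) \supseteq \Ann_A(\widetilde{N}) = \P(I)$ gives $\Ann_A(M) = \P(I)$, completing the proof.
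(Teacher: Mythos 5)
Your proof is correct and is essentially the paper's own argument, repackaged: the paper writes $N = A/L'$ so that $\widetilde{N} = A^e \otimes_A N = A^e/A^eL'$, chooses a maximal left ideal $L \supseteq A^eL'$ by Zorn's lemma (which corresponds exactly to your maximal $K$, since $\widetilde{N}$ is cyclic on $1\otimes n_0$), and uses Lemma~\ref{idealslemma} and Corollary~\ref{C:faithfully}(iv) for the contraction/extension of ideals and the embedding $N \hookrightarrow \widetilde{N}$ that you re-derive via the $Z$-module splitting of the PBW filtration of $Z^e$. The intermediate computation $\Ann_A(\widetilde{N}) = \P(I)$ and the sandwich at the end are a mild rearrangement of the paper's two inclusions $\P(I)\subseteq \Ann_A(A^e/L)\subseteq I$, not a different route.
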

\begin{proof}
Since $I$ is primitive there is a maximal left ideal $L' \subseteq A$ such that $I = \Ann_A(A/L')$.
We consider the left ideal $A^e L' \in \I_l(A^e)$ containing $A^e I$ and observe that, by Zorn's lemma
there is a maximal left ideal $L \in \I_l(A^e)$ containing $A^eL'$ and the quotient $A^e/L$ is a simple left $A^e$-module.
Since $L$ is a proper ideal of $A^e$ it follows that $L\cap A$ is a proper left ideal of $A$. 
By part (ii) of Lemma~\ref{idealslemma} we have $L' = A^e L' \cap A \subseteq L \cap A$ and so the maximality of $L'$
implies that 
\begin{eqnarray}
\label{e:primisanne1}
L' = L \cap A.
\end{eqnarray}

The annihilator
$\Ann_{A^e}(A^e/L)$ is the largest two sided ideal contained in $L$, and we claim that $\Ann_{A^e}(A^e/L) \cap A = \P(I)$.
If we can show that $$\P(I) \overset{(1)}{\subseteq} \Ann_{A}(A^e/L) \overset{(2)}{\subseteq} I$$ then the claim will follow, since we know that
$\P(I)$ is the largest Poisson ideal contained in $I$, whilst $\Ann_{A}(A^e/L)$ is a Poisson ideal by part (ii) of Lemma~\ref{idealslemma}.

Observe that if $J \subseteq I$ is any Poisson ideal of $A$ then $A^e J \subseteq A^e I \subseteq L$ and so
$A^eJ \subseteq \Ann_{A^e}(A^e/L)$ by part (ii) of Lemma~\ref{idealslemma}, since $\Ann_{A^e}(A^e/L)$ is the largest two sided ideal of $A^e$ in $L$.
By part (i) of the same lemma it follows that $J = A^e J \cap A\subseteq \Ann_{A}(A^e/L)$ and so inclusion (1) follows, taking $J = \P(I)$.

Now \eqref{e:primisanne1} and Corolary~\ref{C:alphainj} together imply that $A/L' \hookrightarrow A^e / L$ embeds as an $A$-submodule,
and it follows that $\Ann_A(A^e / L) \subseteq \Ann_A(A/L') = I$, which confirms inclusion (2). The proof is now complete.
\end{proof}

\subsection{Annihilators are Poisson rational}
In order to prove part (b) of the second main theorem we make a more detailed study of the torsion subset 
of a simple module. For a Poisson $A$-module $M$ recall that we define the torsion subset by
\begin{eqnarray}
T(M) := \{z\in Z \mid zm = 0 \text{ for some non-zero } m \in M\}.
\end{eqnarray}
The next result is one of the key steps in proving part (b) of the second main theorem. Part (ii) is rather surprising at first glance,
since in general there is no reason to expect $T(M)$ to be an ideal.
\begin{Lemma} \label{IVideal}
Let $Z$ be any complex affine Poisson algebra, $A$ a Poisson order over $Z$ and $M$ a simple Poisson $A$-module.
The following hold:
\begin{enumerate}
\item[(i)] $\Ann_A(M)$ is a prime ideal of $A$;
\item[(ii)] There exists an element $m_0 \in M$ such that $$T(M) = \{z\in Z\mid zm_0 = 0\}$$ is a prime ideal of $Z$;
\item[(iii)]$\P(T(M)) = \Ann_Z(M).$
\end{enumerate}
\end{Lemma}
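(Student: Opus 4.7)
The plan is to prove the parts in the order (ii), (iii), (i), since (ii) is the technical core.

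For (ii), since $Z$ is affine it is noetherian, so I would choose $m_0 \in M$ with $P := \Ann_Z(m_0)$ maximal among the collection $\{\Ann_Z(m) : 0 \neq m \in M\}$. The classical associated-prime argument shows $P$ is prime: if $xy \in P$ but $y \notin P$, then $ym_0 \neq 0$ and $\Ann_Z(ym_0) \supseteq P$; by maximality this inclusion is an equality, forcing $x \in P$. The containment $P \subseteq T(M)$ is immediate. For the reverse, I would localise at the multiplicative set $S := Z \setminus P$: since $\Ann_Z(m_0) \cap S = \emptyset$, the image of $m_0$ in $M_S$ is non-zero and so $M_S \neq 0$. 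Proposition~\ref{P:simplepoisson} then yields $S \cap T(M) = \emptyset$, so $T(M) \subseteq P$.

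For (iii), one checks directly that $\Ann_Z(M)$ is a Poisson ideal of $Z$ sitting inside $T(M)$, giving $\Ann_Z(M) \subseteq \P(T(M))$. For the reverse inclusion I would set $N := \{m \in M : \P(T(M))\cdot m = 0\}$ and verify this is an $A^e$-submodule of $M$: stability under $A$ uses that $Z$ is central in $A$, while stability under $\delta(Z)$ uses that $\P(T(M))$ is Poisson, via the computation $f\delta(z)m = \delta(z)fm - \{z,f\}m = 0$ for $f \in \P(T(M))$ and $m \in N$, since $\{z,f\} \in \P(T(M))$ also annihilates $m$. As $m_0 \in N$ and $M$ is simple, $N = M$, whence $\P(T(M)) \subseteq \Ann_Z(M)$.

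For (i), note first that $\Ann_A(M)$ is a Poisson ideal of $A$ by the same sort of direct check. The crucial observation is that for any Poisson ideal $I \subseteq A$, the subspace $IM$ is an $A^e$-submodule of $M$: indeed $\delta(z)(im) = i\delta(z)m + \{z,i\}m \in IM$ since $\{z,i\} \in I$. Applied to two Poisson ideals $I_1, I_2$ with $I_1 I_2 \subseteq \Ann_A(M)$, if $I_2 \not\subseteq \Ann_A(M)$ then $I_2M = M$ by simplicity, so $I_1 M = I_1 I_2 M = 0$ and $I_1 \subseteq \Ann_A(M)$. This establishes that $\Ann_A(M)$ is prime among Poisson ideals, and (iii) combined with the fact that Poisson cores of primes are prime shows its contraction $\Ann_A(M) \cap Z = \Ann_Z(M) = \P(T(M))$ is genuinely prime in $Z$. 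The main obstacle is upgrading Poisson-primality to full primeness of $\Ann_A(M)$ in $A$; my plan is to adapt Dixmier's classical argument by passing to the Poisson closures of the two-sided ideals $AaA, AbA$ generated by candidate elements $a, b$ satisfying $aAb \subseteq \Ann_A(M)$, exploiting that $A$ is a finite module over the central subring $Z$ so that the relationship between Poisson ideals and two-sided ideals is well controlled, thereby reducing to the Poisson case handled above.
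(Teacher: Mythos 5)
Your parts (ii) and (iii) follow a genuinely different route from the paper, but part (ii) has a gap. The paper proves (ii) by a filtration argument: with $m_0$ chosen so that $\Ann_Z(m_0)$ is maximal among annihilators (hence an associated prime), it shows $z^{i+1}\F_i M = 0$ for every $z \in \Ann_Z(m_0)$ and deduces that any $m$ with prime annihilator has $\Ann_Z(m) = \Ann_Z(m_0)$. You localise instead, which looks shorter, but the assertion that ``since $\Ann_Z(m_0)\cap S=\emptyset$, the image of $m_0$ in $M_S$ is non-zero'' is not justified by Proposition~\ref{P:simplepoisson} and the reasoning as written is circular: that Proposition shows $\Ker(M\to M_S)$ is a Poisson submodule, hence $0$ or all of $M$, and that it is zero precisely when $S\cap T(M)=\emptyset$ --- which is exactly the conclusion you are trying to reach. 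Because $M_S$ is a quotient of $Z_S\otimes_Z M$ (only a surjection $Z_S\otimes_Z A^e\twoheadrightarrow A_S^e$ is known in general), the kernel of $M\to M_S$ is a priori larger than the $S$-torsion, so the absence of $S$-torsion on $m_0$ does not by itself force $m_0$ to survive in $M_S$. The repair avoids $M_S$ entirely: the set $N_0 := \{m\in M : sm=0 \text{ for some } s\in S\}$ is a Poisson $A$-submodule of $M$ (if $sm=0$ then $s^2\nabla(z)m=-\{z,s\}sm=0$, using the Poisson module axioms and centrality of $s$ in $A$), so by simplicity $N_0$ is $0$ or $M$; since $m_0\notin N_0$ you get $N_0=0$, whence $S\cap T(M)=\emptyset$ and $T(M)\subseteq P$.

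Your part (iii) is correct given (ii). Packaging the computation as ``$N := \{m\in M : \P(T(M))m=0\}$ is an $A^e$-submodule containing $m_0$'' is in fact a cleaner reformulation of what the paper does through the auxiliary chains $I_k, J_k$, the two proofs carrying the same content.

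Your part (i) establishes Poisson-primeness of $\Ann_A(M)$ exactly as the paper does. However, the final upgrade from Poisson-primeness to genuine primeness is only a sketch, and the proposed route is doubtful: passing to the \emph{Poisson closures} of $AaA$ and $AbA$ loses the containment $aAb\subseteq\Ann_A(M)$ because closures grow, so the product of the closures need not land in $\Ann_A(M)$. The paper invokes \cite[Lemma~1.1(d)]{Go}; the underlying argument is that in a noetherian $\C$-algebra, the radical of an ideal stable under a set of derivations is again stable, as are the minimal primes over it (Dixmier 3.3.2, 3.1.10 and 3.3.3), so a Poisson-prime ideal is semiprime and coincides with one of its minimal primes, hence is prime. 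You should invoke this directly rather than replay Dixmier's classical argument.
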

\begin{proof}
If $J, K \subseteq A$ are Poisson ideals with $J, K \nsubseteq \Ann_A(M)$ then $KM$ is a nonzero Poisson 
submodule of $M$ hence equal to $M$, and $JKM = M$ similarly, hence $JK \nsubseteq \Ann_A(M)$. Now the argument
of \cite[Lemma~1.1(d)]{Go} shows that $I$ is prime. Note that the minimal primes of $A$ are Poisson
thanks to paragraphs 3.1.10 and 3.3.3 of \cite{Di}, since $A$ is noetherian. This proves (i).

We suppose $M$ is simple. Recall that the associated primes $\Ass_Z(M)$ of $M$ as a $Z$-module are those prime ideals $\p\in \Spec(Z)$ of the form
$\Ann_Z(m) := \{z \in Z\mid zm = 0\}$ for some $m \in M \setminus \{0\}$. It is well known that those ideals $\Ann_Z(m_0)$ which are
maximal in the set $\{\Ann_Z(m) \mid m\in M \setminus \{0\}\}$ are prime, and hence lie in $\Ass_Z(M)$; see \cite[Proposition~3.4]{Ei} for example.
Since $Z$ is noetherian every annihilator $\Ann_Z(m)$ is contained in a maximal annihilator and this implies
\begin{eqnarray}
\label{e:Tisp}
T(M) = \bigcup_{\p \in \Ass_Z(M)} \p.
\end{eqnarray}

Choose $m_0 \neq 0$ such that $\Ann_Z(m_0) \in \Ass_Z(M)$ is maximal amongst the annihilators.
We claim that $T(M) = \Ann_Z(m_0)$. Observe that $A^e m_0 = M$ since $m_0 \neq 0$ and $M$ is simple.
Define a filtration $M = \bigcup_{i \geq 0} \F_i M$ where $\F_i M := (\F_i A^e) m_0 = A \F_i Z^e m_0$ by Lemma~\ref{L:envgenset}.
Using relations \eqref{alghom}, \eqref{mdelta} and induction on $i$ we see that $z^{i+1} \F_i M = 0$ for every $z \in \Ann_Z(m_0)$.
Pick some $z \in \Ann_Z(m_0)$ and note that if $m \in M$ has a prime annihilator in $Z$ then $m \in \F_i M$ for some $i$, so $z^{i+1} m = 0$.
This implies $z \in \Ann_Z(m)$ by primality.
We have deduced the inclusion $\Ann_Z(m_0) \subseteq \Ann_Z(m)$, which is actually an equality by the maximality of $\Ann_Z(m_0)$. Using \eqref{e:Tisp} we conclude
\begin{eqnarray}
\label{e:Tmzisann}
T(M) = \Ann_Z(m_0)
\end{eqnarray}
as desired, proving (ii). 

Set $I := \Ann_Z(M)$. We have $I \subseteq \P(T(M))$ and we now prove that this is an equality. According to \cite[3.3.2]{Di} we have
\begin{eqnarray}
\label{e:coredef1}
\P(T(M)) = \left\{z \in T(M) \mid \begin{array}{c} H(x_1) \cdots H(x_n) z \in T(M) \\ \text{ for all } n \geq 0 \text{ and } x_1,...,x_n \in Z\end{array}\right\}.
\end{eqnarray}
According to \eqref{e:Tmzisann} this set is equal to
\begin{eqnarray}
\label{e:coredef2}
\{z \in Z \mid  (H(x_1) \cdots H(x_n) z) m_0 = 0 \text{ for all } x_1,....,x_n \in Z, n \geq 0\}
\end{eqnarray}
We claim that \eqref{e:coredef2} is equal to
\begin{eqnarray}
\label{e:coredef3}
\{z \in Z \mid z \nabla(x_1) \cdots \nabla(x_n) m_0 = 0 \text{ for all } x_1,...,x_n \in Z, n \geq 0\}
\end{eqnarray}
where $\nabla : Z \rightarrow \End_\C(M)$ is the structure map of the module $M$. To prove they are equal we define
\begin{eqnarray*}
& & I_k := \{z \in Z \mid  (H(x_1) \cdots H(x_n) z) m_0 = 0 \text{ for all } x_1,....,x_n \in Z, k \geq n \geq 0\};\\
& & J_k := \{z \in Z \mid z \nabla(x_1) \cdots \nabla(x_n) m_0 = 0 \text{ for all } x_1,...,x_n \in Z, k \geq n \geq 0\},
\end{eqnarray*}
and we show that $I_k = J_k$ for all $k \geq 0$. The case $k = 0$ is trivial and so we prove the
case $k > 0$ by induction. By part (i) of Lemma~\ref{L:extendedrels} and part (iii) of Lemma~\ref{L:categoryequivalence}
we have
\begin{eqnarray}
\label{e:coredef4}
(H(x_1) \cdots H(x_n) z) m_0 = (\ad(\nabla(x_1)) \cdots \ad(\nabla(x_n)) z) m_0.
\end{eqnarray}
The right hand side of \eqref{e:coredef4} is a sum of expressions of the form
\begin{eqnarray}
\label{e:coredef5}
\pm \nabla(x_{j_1}) \cdots \nabla(x_{j_p}) z \nabla(x_{j_p+1}) \cdots \nabla(x_{j_n}) m_0
\end{eqnarray}
where $\{j_1,...,j_n\} = \{1,...,n\}$ and $0 \leq p \leq n$, and there is a unique summand in \eqref{e:coredef4} with $p = 0$,
in which case $(j_1,...,j_n) = (n, n-1,...,1)$. If $z \nabla(x_1) \cdots \nabla(x_n) m_0 = 0$ for all $x_1,...,x_n \in Z, k \geq n \geq 0$
then it follows immediately that \eqref{e:coredef4} vanishes, whence $J_k \subseteq I_k$. Conversely, if $z \in I_k$ then $z \in J_{k-1}$
by the inductive hypothesis, and so we deduce $z \nabla(x_k) \cdots \nabla(x_1) m_0 = 0$ for all $x_1,...,x_k \in Z$ 
from our description of the summands occuring in \eqref{e:coredef4}. This shows that $I_k \subseteq J_k$.

Since \eqref{e:coredef2} is equal to $\bigcap_{k \geq 0} I_k$ and \eqref{e:coredef3} is equal to $\bigcap_{k \geq 0} J_k$ we have
proven that $\P(T(M))$ is given by \eqref{e:coredef3}. It follows that this ideal annihilates $Z^e m_0$.
By Lemma~\ref{L:envgenset} we see that $M = A^e m_0 = A Z^e m_0$. If $z \in \P(T(M))$ then since $z$ is central in $A$
we have $z M = A (z Z^e m_0) = 0$ and we have shown that $\P(T(M)) = I$. This proves (iii).
\end{proof}

We are ready to prove part (b) of the second main theorem.
\begin{Theorem}
\label{annisprim}
Let $Z$ be a complex affine Poisson algebra and $A$ a Poisson order over $Z$.
If $M$ is a simple Poisson $A$-module then $\Ann_A(M)$ is Poisson rational.
\end{Theorem}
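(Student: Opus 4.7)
The plan is to reduce Poisson rationality of $I := \Ann_A(M)$ to the ordinary rationality of the primitive ideal $I^e := \Ann_{A^e}(M)$ in the Poisson enveloping algebra, and then to invoke the implication ``primitive $\Rightarrow$ rational'' from the classical non-commutative Dixmier--M{\oe}glin equivalence. Since $Z$ is affine and $A$ is a finite $Z$-module, $A$ is finitely generated as a $\C$-algebra; by Lemma~\ref{L:envgenset} the same is then true of $A^e$, and by Lemma~\ref{L:noetherian} both rings are Noetherian. Via Lemma~\ref{L:categoryequivalence}, $M$ is a simple $A^e$-module, so $I^e \in \Prim(A^e)$ and $A^e/I^e$ is a prime Noetherian affine $\C$-algebra with $I^e \cap A = I$. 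Since $\C$ is uncountable, the standard combination of Dixmier's version of Schur's lemma (which yields $\End_{A^e}(M) = \C$) with Martindale's embedding of the extended centroid of a prime ring into the endomorphism ring of a faithful simple module gives $Z(Q(A^e/I^e)) = \C$.

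It therefore suffices to embed $C_{\P} Q(A/I)$ into $Z(Q(A^e/I^e))$. Given $z \in C_{\P} Q(A/I)$, I would set $L := \{b \in A/I : zb \in A/I\}$: centrality of $z$ in $Q(A/I)$ makes $L$ a two-sided ideal of $A/I$, while $H(Z)\,z = 0$ makes $L$ stable under the Hamiltonian derivations and hence Poisson. The map $\phi : L \to A/I$, $\ell \mapsto z\ell$, is then a Poisson $(A/I)$-bimodule homomorphism, satisfying $\phi(a\ell b) = a\phi(\ell)b$ for $a, b \in A/I$ and $\phi(H(x)\ell) = H(x)\phi(\ell)$ for $x \in Z$.

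The central technical step---and the main obstacle---is to extend $\phi$ to an $(A^e/I^e)$-bimodule homomorphism $\tilde\phi \colon (A^e L A^e + I^e)/I^e \to A^e/I^e$ via the rule $\tilde\phi(u\ell v) := u\phi(\ell)v$ for $u, v \in A^e$ and $\ell \in L$. Well-definedness comes down to verifying that the defining relations~\eqref{liehom}--\eqref{deltaxy} of $A^e$ are respected by this formula: the crucial check is that the identity $[\delta(x), \ell] = H(x)\ell$ in $A^e$ (from relation~\eqref{mdelta}) matches $[\delta(x), \phi(\ell)] = H(x)\phi(\ell)$ in $A^e/I^e$, which is precisely the $H(Z)$-equivariance of $\phi$; the remaining relations are handled analogously. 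Once well-definedness is known, the image of $A^e L A^e$ in $A^e/I^e$ is nonzero (since $LM = M$ by simplicity of $M$ and Poisson-ness of $L$ forces $A^e L A^e \nsubseteq I^e$), so $\tilde\phi$ represents an element of the Martindale extended centroid of the prime Noetherian ring $A^e/I^e$, which here coincides with $Z(Q(A^e/I^e)) = \C$. Consequently $\tilde\phi$ acts as multiplication by some $\lambda \in \C$; restricting back to $\ell \in L$ yields $(z - \lambda)L = 0$ in $Q(A/I)$, and primeness of $A/I$ forces $z = \lambda$, proving $z \in \C$.
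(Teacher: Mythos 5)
The overall strategy---pass to the Poisson enveloping algebra $A^e$, use the fact that $A^e$ is an affine Noetherian $\C$-algebra to get a Dixmier/Nullstellensatz-type conclusion, and transfer back to $C_{\P}Q(A/I)$---is recognisably the same as the paper's. What differs is the transfer mechanism: the paper localises $M$ at a carefully chosen element $d \in Z$ with $d \notin T(M)$, obtaining a simple Poisson $A_d$-module $M_d$ on which the would-be central element genuinely acts, and then applies Dixmier's lemma to $\End_{A^e_d}(M_d)$; you instead try to realise $z$ as an element of the Martindale extended centroid of $A^e/I^e$ and appeal to a general ``primitive $\Rightarrow$ rational'' statement for $A^e$.

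The genuine gap in your argument is the well-definedness of $\tilde\phi$. You assert that ``well-definedness comes down to verifying that the defining relations~\eqref{liehom}--\eqref{deltaxy} of $A^e$ are respected,'' and you check the compatibility $[\delta(x),\phi(\ell)] = H(x)\phi(\ell) = \phi(H(x)\ell)$. That check shows the formula is consistent under a single application of relation~\eqref{mdelta}, but it does not prove well-definedness: the formula $\tilde\phi(u\ell v) = u\phi(\ell)v$ is over-determined, because the same element of $(A^eLA^e+I^e)/I^e$ admits many expressions $\sum_i u_i\ell_iv_i$, and most of the resulting relations $\sum u_i\ell_iv_i = \sum u'_j\ell'_jv'_j$ arise not from the defining relations of $A^e$ but from cancellation after passing to the quotient $A^e/I^e$. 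If you write $z = ab^{-1}$ with $b\in Z/(Z\cap I)$, then the natural attempt to prove $\sum\ell_iu_i = 0 \Rightarrow \sum\phi(\ell_i)u_i = 0$ multiplies through by $b$: one gets $b\sum\phi(\ell_i)u_i = a\sum\ell_iu_i = 0$, and one must cancel $b$. This requires $b$ to be left-regular in $A^e/I^e$, equivalently (since $A^e/I^e$ acts faithfully on $M$) that $b\notin T(M)$. There is no reason for that to hold for an arbitrary representative, and establishing that one \emph{can} choose a representative whose denominator avoids $T(M)$ is exactly the content of the paper's Lemma~\ref{IVideal}(iii), the statement $\P(T(M)) = I\cap Z$. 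That lemma is the central technical input into the paper's proof, and it is entirely absent from your argument; without it the extension of $\phi$ does not exist.

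Two smaller remarks. First, the assertion that the extended centroid of a prime Noetherian ring embeds into $\End_R(M)$ for a faithful simple module $M$ is not a standard theorem; what you actually need is ``primitive $\Rightarrow$ rational'' for countably generated Noetherian $\C$-algebras (this is true, via the Nullstellensatz), and it should be cited as such rather than as a Martindale-type embedding. Second, note that once well-definedness is fixed, the concluding step is correct: $\tilde\phi$ represents an element of $Z(Q(A^e/I^e)) = \C$, say $\lambda$, and $(z-\lambda)L = 0$ with $L$ a nonzero (hence essential) ideal of the prime Goldie ring $A/I$ forces $z = \lambda$. But the missing ingredient---control over the $Z$-torsion of $M$ in the form of $\P(T(M)) = I\cap Z$---must be supplied before this conclusion can be reached.
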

\begin{proof}
Write $I := \Ann_A(M)$ and observe that the quotient $A/I$ is prime thanks to part (i) of Lemma~\ref{IVideal}.
The image of the torsion subset $T(M)$ in $A/I$ is denoted $\oT(M)$. Since $A/I$ is a finite module
over the central subalgebra $Z/Z \cap I$ it follows that $Q(A/I) \cong A/I \otimes_{Z/Z\cap I} Q(Z/I)$.
In other words, when considering elements $ab^{-1} \in Q(A/I)$ we may choose a representative such
that $b \in Z/Z\cap I$. Notice that $M$ is a Poisson $Z$-module and so $Z\cap I = \Ann_Z(M)$ is a Poisson ideal
of $Z$, and consequently $Z/Z\cap I$ is a Poisson algebra.

Let $ab^{-1} \in C_{\P} Q(A/I)$ (defined in \eqref{e:Pcentre}) with $b\in Z/Z\cap I$. We claim that $ab^{-1}$
has a representative such that $b \notin \oT(M)$.
To see this, suppose that $b \in \oT(M)$ and consider the ideal
$$J := \{ z\in Z/Z\cap I \mid z x ab^{-1}, \{z, x ab^{-1}\} \in A/I \text{ for all } x \in A\}.$$
It is not hard to see that:
\begin{itemize}
\item[(i)]{$J$ is a Poisson ideal of $Z/Z \cap I$ so $A J/ I$ is an $H(Z/Z\cap I)$-stable ideal of $A/I$;}
\item[(ii)]{$b^2 \in J$ so that $J \neq 0$.}
\end{itemize}
If $J \subseteq \oT(M)$ then we would have $\P(\oT(M))\neq 0$. By Lemma~\ref{L:Pandcap} this would contradict the fact that $\P(T(M)) = I \cap Z$, as demonstrated in Lemma~\ref{IVideal}.
It follows that there exists $a_1 \in A$ and $b_1 \in (Z/Z\cap I) \setminus \oT(M)$ such that $b_1 ab^{-1} = a_1$, which implies that $ab^{-1} = a_1 b_1^{-1}$
and this confirms our claim.

We proceed with $ab^{-1} \in C_{\P} Q(A/I)$ and $b \in (Z/Z\cap I) \setminus \oT(M)$. We can choose $c \in A$ and $d \in Z \setminus T(M)$ such that $c\mapsto a$, $d\mapsto b$ under
the natural homomorphism $A \rightarrow A/I$, and this allows us to define the localisation $M_d := A_d^e \otimes_{A^e} M$.
Write $\phi : A_d \rightarrow \End_\k(M_d)$ for the representation induced by the $A_d$-module structure.
Since $ab^{-1} \in C_{\P}Q(A/I)$ we have $H(Z) cd^{-1} \subseteq I$ and $\ad(A) cd^{-1} \subseteq I$, and so by relations (ii) and (iii) of a Poisson $A_d$-module $cd^{-1}$ is sent to
$\End_{A^e_d}(M_d)$ under $\phi$. Since $M$ is a simple Poisson $A$-module and $d\notin T(M)$ we have that, $M_d$ is a non-zero simple Poisson $A_d$-module
by Proposition~\ref{P:simplepoisson}. Since $M \rightarrow M_d$ is a Poisson $A$-module homomorphism it is necessarily injective.
 By Lemma~\ref{L:envgenset} we have that $A_d^e$ is a finitely generated $\C$-algebra and so we may apply Dixmier's
lemma to deduce that $\End_{A^e_d}(M_d) = \C$. It follows that $\phi(cd^{-1} - \lambda) = 0$ for some $\lambda \in \C$,
which implies $\phi(c - \lambda d) = 0$. Since $M \hookrightarrow M_d$ we deduce that $c - \lambda d \in I$
and so $a = \lambda b$ in $A/I$. Finally we deduce that $ab^{-1} = \lambda$ in $Q(A/I)$.
This shows that $I$ is rational and completes the proof.
\end{proof}

\begin{Remark}
\label{R:hypothesisremark}
\begin{enumerate}
\item[(i)] It seems credible that the hypothesis $Z$ is regular can be removed from Theorem~\ref{primisann}; see the remarks in \textsection \ref{S:relres}
for a suggested approach in some special cases.
\item[(ii)] The hypothesis that $Z$ is affine is necessary in the statement of Theorem~\ref{annisprim}, as the following example shows.
We are grateful to Sei-Qwon Oh for explaining this to us, and permitting us to reproduce it here. Let $Z = \C[[x]]$ be the ring of formal power series in one
variable, and let $M = \C((x)) =Z[x^{-1}]$ be the ring of formal Laurent series. Since $Z$ is local it is not hard to see that the $Z$-submodules of $M$ are all of the form
$M_k := \{\sum_{i \geq k} a_i x^i \mid a_k, a_{k+1},... \in \C\}$ for $k \in \mathbb{Z}$. Now equip $Z$ with the trivial Poisson structure and make $M$ a Poisson $Z$-module
by setting $H(x) = \{x, \cdot\} := x^{-1} \frac{\partial}{\partial x}$. It is not hard to see that $H(x) : M_k \rightarrow M_{k-1}$ for all $k\in \mathbb{Z}$
and so $M$ is a simple Poisson $Z$-module. It follows that $(0)$ is the annihilator of a simple Poisson $Z$-module, however it is clearly not Poisson rational since $C_{\P}(Q(Z)) = M$.

\end{enumerate}
\end{Remark}

\section{The weak Poisson Dixmier Moeglin equivalence}
\label{S:weakPDME}
Once again the ground field is $\C$. In this section we prove (a) and the equivalence of (i), (ii), (iii) from the second main theorem.

\subsection{$\Delta$-ideals in $\Delta$-algebras}
It will be convenient to work in a slightly more general context than the setting of Poisson orders over affine algebras: we do not need to assume that the derivations $H(Z)$ arise
from a Poisson structure in order to state and prove that Poisson weakly locally closed, Poisson primitive and Poisson rational ideals all coincide.
We proceed by stating all of the notations needed, which shall remain fixed throughout the current section.

Let $A$ be a finitely generated semiprime noetherian $\C$-algebra which is a finite module over some central subalgebra $Z$.
By the Artin--Tate lemma it follows immediately that $Z$ is an affine algebra. The centre of $A$ will be written $C(A)$ for the current section.
We continue to denote the primitive and prime spectra of $A$ by $\Prim(A)$ and $\Spec(A)$, endowed with their Jacobson topologies (Cf. \textsection \ref{S:sympcores}).

We fix for the entire section an arbitrary subset $\Delta \subseteq \Der_\C(A)$ such that $\Delta(Z) \subseteq Z$,
and we remark that we do not need to assume that $\Delta$ is a Lie algebra, or even a vector space in what follows.
When $I$ is any subset of $A$ we write $\Delta(I) \subseteq I$ whenever
$\delta(I)\subseteq I$ for all $\delta \in \Delta$. We say that an ideal $I$ of $A$ is a \emph{$\Delta$-ideal} if $\Delta(I) \subseteq I$.
For every ideal $I \subseteq A$ we consider the $\Delta$-core of $I$, denoted $\P_\Delta(I)$,
which is the unique maximal two-sided $\Delta$-ideal of $A$ contained in $I$. It is is easy to see that such an ideal exists and is unique
since it coincides with the sum of all $\Delta$-ideals contained in $I$.
The \emph{$\Delta$-primitive ideals} of $A$ are the ideals $$\Prim_\Delta(A) := \{\P_\Delta(I) \mid I \in \Prim(A)\}.$$

An ideal is called $\Delta$-prime if whenever $J, K$ are $\Delta$-ideals satisfying $JK \subseteq I$
we have $J \subseteq I$ or $K \subseteq I$. The $\Delta$-spectrum of $A$ is the space of all
$\Delta$-prime ideals, equipped with the Jacobson topology, denoted $\Spec_\Delta(A)$.
\begin{Lemma}\label{primebasics}
The following hold:
\begin{enumerate}
\item{If $I\in \Spec(A)$ then $\P_\Delta(I) \in \Spec(I)$;}
\item{If $I$ is a $\Delta$-ideal and $I_1,...,I_n$ are the minimal prime ideals over $I$ then $I_1,...,I_n$ are $\Delta$-ideals;}
\item{$\Spec_\Delta(A) = \{ I \in \Spec(A) \mid \Delta(I) \subseteq I\}$;}
\item{If $\{I_s \mid s\in S\}$ is any collection of ideals of $A$ then $$\bigcap_{s\in S} \P_\Delta(I_s) = \P_\Delta(\bigcap_{s\in S} I_s)$$
and 
$$\P_\Delta(I) \cap Z = \P_\Delta(I \cap Z)$$ for all $I \in \Spec(A)$. 
}
\end{enumerate}
\end{Lemma}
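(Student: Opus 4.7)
I would prove the four parts in the order (4), (1), (2), (3), since (3) depends on (2) and (1) is clarified by (3). Part (4) is purely formal. For the first identity, the left-hand side is an intersection of $\Delta$-ideals, hence $\Delta$-stable, and contained in $\bigcap_{s\in S} I_s$, so it lies inside $\P_\Delta(\bigcap_{s\in S} I_s)$ by maximality; conversely, $\P_\Delta(\bigcap_{s\in S} I_s)$ is a $\Delta$-ideal contained in each $I_s$, so it sits in every $\P_\Delta(I_s)$ and therefore in their intersection. The second identity uses the hypothesis $\Delta(Z) \subseteq Z$: the intersection $\P_\Delta(I) \cap Z$ is a $\Delta$-stable ideal of $Z$ contained in $I\cap Z$, yielding one inclusion; for the reverse, centrality of $Z$ ensures that $A \cdot \P_\Delta(I\cap Z)$ is a two-sided $\Delta$-ideal of $A$ contained in $I$, hence in $\P_\Delta(I)$, and intersecting with $Z$ recovers $\P_\Delta(I\cap Z)$.

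For part (1), the ideal $\P_\Delta(I)$ is $\Delta$-stable by construction, so it suffices to verify the $\Delta$-prime condition: if $J, K$ are $\Delta$-ideals with $JK \subseteq \P_\Delta(I) \subseteq I$, primeness of $I$ forces $J \subseteq I$ or $K \subseteq I$, and whichever inclusion holds, the containing $\Delta$-ideal sits inside $\P_\Delta(I)$ by maximality. Part (2) is a form of Seidenberg's theorem in the noncommutative setting: replacing $A$ by $A/I$ (which carries an induced $\Delta$-action because $I$ is $\Delta$-stable) reduces matters to showing that each minimal prime of a noetherian associative algebra equipped with a set of derivations is $\Delta$-stable. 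My approach is to fix a minimal prime $P$, choose via prime avoidance an element $c$ lying in every other minimal prime but not in $P$, and combine nilpotency of the nilradical $N$ (so $N^k = 0$ for some $k$) with the Leibniz rule to force $\delta(P) \subseteq P$ for every $\delta \in \Delta$.

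Finally, part (3) follows from (2). The implication prime and $\Delta$-stable $\Rightarrow$ $\Delta$-prime is immediate by restriction to $\Delta$-ideals. For the converse, passing to $A/I$ I must show that a $\Delta$-prime ring is prime. By (2) each minimal prime of $A$ is a $\Delta$-ideal; their product is then a $\Delta$-ideal (products of $\Delta$-stable ideals are $\Delta$-stable by Leibniz), and it is nilpotent by the noetherian hypothesis. Iterated application of the $\Delta$-prime property of $(0)$ forces one of the minimal primes to equal $(0)$, whence $A$ is prime. The main obstacle will be part (2): while Seidenberg's theorem is classical in the commutative case, its noncommutative analogue requires either careful localization at regular elements or direct manipulation of the nilradical together with prime avoidance.
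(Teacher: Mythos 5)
Your treatment of (4) is complete and correct, filling in a calculation the paper leaves to the reader, and your argument for (3) is also correct and is essentially the paper's: you usefully spell out the nilpotence step that the paper compresses when it writes $I = \bigcap_i I_i$ directly for a $\Delta$-prime ideal $I$. The two remaining parts, which the paper disposes of by citing Dixmier~3.3.2, are where your proposal falls short. In (1) the assertion is that $\P_\Delta(I)$ is an \emph{ordinary} prime of $A$; your argument tests the product condition only against pairs of $\Delta$-ideals and therefore establishes just $\Delta$-primeness. Part (3) does identify the two notions, so the conclusion is recoverable, but your stated order (4), (1), (2), (3) leaves (1) undersupported: you should either move (1) to the end and close it with an appeal to (3), or give the classical direct argument (for arbitrary ideals $J, K$ with $JK \subseteq \P_\Delta(I)$, pass to the smallest $\Delta$-ideals $\tilde J, \tilde K$ containing them, generated by all iterated $\Delta$-derivatives, show $\tilde J\tilde K \subseteq I$, and invoke primeness of $I$).

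For (2) you attempt a self-contained Seidenberg-type argument and correctly identify it as the crux. The sketch points in the right direction, but the decisive inference --- from ``$cp$ lies in a nilpotent ideal'' to ``$\delta(p)\in P$'' --- is precisely the step that is delicate noncommutatively: $\delta\bigl((cp)^k\bigr)$ expands into a sum of interleaved terms $(cp)^i\,\delta(cp)\,(cp)^{k-1-i}$, none of which vanishes on its own, so there is no one-line deduction from $(cp)^k = 0$. One needs either an induction on the nilpotency index with a careful bookkeeping of where $c\notin P$ and the primeness of $P$ enter, or a reduction to $A/N$; also note that selecting $c\in\bigcap_{j\neq 1}P_j\setminus P_1$ is an incomparability-of-minimal-primes observation rather than prime avoidance. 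As written this is a plan, not a proof; either fill in the induction in full or cite Dixmier as the paper does.
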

\begin{proof}
Part (1) and (2) are \cite[3.3.2]{Di}. It is clear that the $\Delta$-ideals in $ \Spec(A)$ all lie in $\Spec_\Delta(A)$.
Note that if $I \in \Spec_\Delta(A)$ with minimal primes $I = \bigcap_{i=1}^n I_i$ then part (2) implies that $I = I_i$ for some $i$
and so $I \in \Spec(A)$, which gives the reverse inclusion, proving (3). Part (4) is a short calculation which we leave to the reader.
\end{proof}

An ideal $I$ in $\Spec_\Delta(A)$ is called \emph{$\Delta$-locally closed} if $\{I\}$ is a locally closed subset of $\Spec_\Delta(A)$. The following lemma is
immediate from the definition.
\begin{Lemma}\label{intover}
An ideal $I\in \Spec_\Delta(A)$ is $\Delta$-locally closed if and only if $I$ is properly contained in the intersection of all $\Delta$-ideals strictly containing it. $\hfill \qed$
\end{Lemma}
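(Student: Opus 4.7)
The plan is a direct unpacking of the definition of the Jacobson topology on $\Spec_\Delta(A)$. Recall that its closed sets are precisely $V(K) := \{J \in \Spec_\Delta(A) \mid K \subseteq J\}$ as $K$ ranges over ideals (equivalently, over $\Delta$-ideals) of $A$. In particular the closure of the singleton $\{I\}$ is $V(I)$, so $\{I\}$ is $\Delta$-locally closed exactly when $\{I\}$ is open in $V(I)$, equivalently when the complementary set
\[
V(I) \setminus \{I\} \;=\; \{J \in \Spec_\Delta(A) : I \subsetneq J\}
\]
is itself closed in $\Spec_\Delta(A)$.

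I would then introduce $N := \bigcap \{J \in \Spec_\Delta(A) : I \subsetneq J\}$, which is manifestly a $\Delta$-ideal (as an intersection of such) containing $I$. The claim is that $V(I) \setminus \{I\}$ is closed if and only if it coincides with $V(N)$, if and only if $I \subsetneq N$. Indeed, every $J \in V(I) \setminus \{I\}$ is one of the ideals whose intersection defines $N$, so $N \subseteq J$ and $J \in V(N)$; thus $V(I) \setminus \{I\} \subseteq V(N)$ always. The reverse inclusion $V(N) \subseteq V(I) \setminus \{I\}$ amounts to requiring $I \notin V(N)$, which reads $N \nsubseteq I$, which (combined with the trivial $I \subseteq N$) is exactly $I \subsetneq N$. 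This directly yields the stated equivalence in terms of $\Delta$-primes.

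The only subtle step is to match the intersection $N$ above with the phrasing ``the intersection of all $\Delta$-ideals strictly containing $I$'' appearing in the statement. One inclusion is tautological since $\Delta$-primes are $\Delta$-ideals. For the other, I would invoke Lemma~\ref{primebasics}(2): the minimal primes of any $\Delta$-ideal $J \supsetneq I$ are $\Delta$-primes, and primality of $I$ together with $J \nsubseteq I$ forces each such minimal prime to strictly contain $I$. This lets one identify the two intersections for the purpose of the property ``strictly contains $I$'', closing the proof. I expect this last identification to be the only minor obstacle; the bulk of the argument is a purely formal unwinding of the Jacobson topology.
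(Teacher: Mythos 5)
The bulk of your argument --- unwinding the Jacobson topology on $\Spec_\Delta(A)$, identifying $\overline{\{I\}}=V(I)$, and showing that $\{I\}$ is locally closed precisely when $I \subsetneq N := \bigcap\{J \in \Spec_\Delta(A) : I \subsetneq J\}$ --- is correct, and this is exactly the ``immediate from the definition'' content the paper intends; it is also how the lemma is actually deployed in Lemma~\ref{L:PDME}, where the relevant intersection $N_I$ runs over Poisson (hence $\Delta$-)prime ideals.

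Your final paragraph, however, does not close the gap you flag there, and indeed no argument can. Passing to the minimal primes $P_1,\dots,P_k$ over a $\Delta$-ideal $J \supsetneq I$ only yields $N \subseteq \bigcap_i P_i = \sqrt{J}$, which can strictly contain $J$, so the inclusion $N\subseteq J$ you would need is not available. Concretely, take $A = Z = \C[x,y]$ with Poisson bracket $\{x,y\} = x$ and $I = (0)$. The nonzero Poisson primes of $Z$ are $(x)$ and the maximal ideals $(x, y-c)$, so $N = (x) \supsetneq (0)$ and $\{(0)\}$ is $\Delta$-locally closed; yet each $(x^n)$ is a Poisson ideal and $\bigcap_n (x^n) = (0)$ by Krull's intersection theorem, so $(0)$ equals the intersection of all the nonzero Poisson \emph{ideals}. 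The phrase ``$\Delta$-ideals strictly containing $I$'' in the statement should therefore be read as ``elements of $\Spec_\Delta(A)$ strictly containing $I$'' (i.e.\ $\Delta$-prime ideals); under that reading your main argument is the whole proof and the final reconciliation step is neither needed nor obtainable.
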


Since $A$ is noetherian and $\Spec_\Delta(A)\subseteq \Spec(A)$ we see that $A/I$ is a prime noetherian ring for all
$I \in \Spec_\Delta(A)$. By Goldie's theorem the set of nonzero elements of $A/I$ satisfies the right Ore condition and so we can consider
the full ring of fractions $Q(A/I)$ which is a simple artinian ring. The set of derivations $\Delta$ acts naturally on
$A/I$, and the action extends to an action on $Q(A/I)$ by the Leibniz rule \eqref{leibniz}.
We define the $\Delta$-centre of $Q(A/I)$ to be the set
$$C_\Delta(Q(A/I)):= \{z \in C(Q(A/I)) \mid \Delta(z) = 0\}$$
and we say that an ideal $I \in \Spec_\Delta(A)$ is \emph{$\Delta$-rational} if $C_\Delta(Q(A/I)) = \C$.

\begin{Lemma}
\label{L:locimpprimimprat}
\begin{enumerate}
\item{Every $\Delta$-locally closed ideal of $A$ is $\Delta$-primitive;}
\item{Every $\Delta$-primitive ideal of $A$ is $\Delta$-rational.}
\end{enumerate}
\end{Lemma}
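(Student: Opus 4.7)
The plan is to exploit the Jacobson property of $A$, which is affine by the Artin--Tate lemma applied to the finite centralising extension $Z \subseteq A$. By Lemma~\ref{primebasics}(3) the $\Delta$-prime ideal $I$ is prime, hence equals the intersection of the primitive ideals $P \supseteq I$. For each such $P$ the $\Delta$-core $\P_\Delta(P)$ contains $I$, and the goal is to produce at least one $P$ with equality. Suppose for contradiction that every $\P_\Delta(P)$ strictly contains $I$. Each $\P_\Delta(P)$ is a $\Delta$-prime by Lemma~\ref{primebasics}(1), hence contains the intersection $J$ of all $\Delta$-primes strictly containing $I$. Lemma~\ref{intover} gives $I \subsetneq J$, but then
\begin{equation*}
J \;\subseteq\; \bigcap_{P \supseteq I,\, P \in \Prim(A)} \P_\Delta(P) \;\subseteq\; \bigcap_{P \supseteq I} P \;=\; I,
\end{equation*}
a contradiction.

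\textbf{Part (2).} Write $I = \P_\Delta(P)$ with $P = \Ann_A M$ for a simple $A$-module $M$, and fix $z \in C_\Delta Q(A/I)$. Since $A/I$ is prime noetherian and finite over the affine domain $Z/(Z \cap I)$, the full ring of quotients is $Q(A/I) = (A/I) \otimes_{Z/(Z \cap I)} Q(Z/(Z \cap I))$, so I may write $z = c b^{-1}$ with $b$ in the image of $Z$. A denominator-ideal argument then arranges $b \notin P$: the set $J := \{\bar b \in Z/(Z \cap I) : z\bar b \in A/I\}$ is a nonzero $\Delta$-ideal (for $\bar b \in J$ and $\delta \in \Delta$, the identity $\delta(\bar b) z = \delta(\bar b z) \in A/I$ uses $\delta(z) = 0$), so its preimage $\tilde J \subseteq Z$ is a $\Delta$-ideal and $A \tilde J$ is a two-sided $\Delta$-ideal of $A$ by centrality of $Z$; if $A \tilde J \subseteq P$ then $A \tilde J \subseteq \P_\Delta(P) = I$, forcing $\tilde J \subseteq Z \cap I$ and contradicting $J \neq 0$. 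Since $A$ is countable-dimensional over $\C$, so is the cyclic module $M$, and Dixmier's lemma applies: the central element $b$ acts on $M$ as a scalar $\beta \in \C^\times$. The product $c = zb$ lies in $A/I$ and is central there (being central in $Q(A/I)$), so any lift of $c$ to $A$ commutes with $A$ modulo $I \subseteq P$, hence acts $A$-linearly on $M$ and equals a scalar $\gamma \in \C$. It remains to show that $u := \beta c - \gamma b \in A/I$ vanishes; at this stage we know only $u \in P/I \cap Z(A/I)$.

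\textbf{The main obstacle} is closing this last step, and the plan is to study the $\C$-span $V$ of the iterated derivatives $\delta_1 \cdots \delta_n(u)$. Since derivations preserve the centre of a ring, $V \subseteq Z(A/I)$. In the Ore localisation $(A/I)_b$ the element $w := u b^{-1} = \beta z - \gamma$ is $\Delta$-invariant by construction, so $\delta_n(u) = \delta_n(wb) = w \delta_n(b)$ and a short induction yields
\begin{equation*}
\delta_1 \cdots \delta_n(u) \cdot b \;=\; u \cdot \delta_1 \cdots \delta_n(b) \qquad \text{in } A/I.
\end{equation*}
Because $uM = 0$ and $b$ acts as $\beta \neq 0$ on $M$, this identity forces $V \cdot M = 0$, so $V \subseteq P/I$. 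Then $(A/I) V$ is a two-sided $\Delta$-ideal of $A/I$ contained in $P/I$ (two-sided by centrality of $V$, $\Delta$-stable by the Leibniz rule), and its preimage in $A$ is a $\Delta$-ideal of $A$ sitting inside $P$, hence inside $\P_\Delta(P) = I$. Therefore $V = 0$, in particular $u = 0$, and $z = \gamma/\beta \in \C$.
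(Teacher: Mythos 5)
Both parts are correct.

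\textbf{Part (1)} is essentially the paper's argument: write $I$ as an intersection of primitives using the Jacobson property, pass to the $\Delta$-cores via Lemma~\ref{primebasics}(4), and use Lemma~\ref{intover} to conclude that one of those cores must equal $I$. You phrase it as a contradiction with the intersection of all $\Delta$-primes strictly containing $I$, but the content is the same.

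\textbf{Part (2)} takes a genuinely different route. The paper does not invoke Dixmier's lemma for $M$ at this point at all; instead, after replacing $A$ by $A/I$, it uses the observation $a\otimes z^{-1}=\delta(a)\otimes\delta(z)^{-1}$ repeatedly to land $C_{\Delta}Q(A)$ inside $A\otimes_Z Z_{J_0}$, then composes with the quotient to obtain a field embedding $C_{\Delta}Q(A)\hookrightarrow A/J_0A$, where $J_0=J\cap Z$ is maximal by Dixmier's lemma applied to $Z$; since $A/J_0A$ is finite-dimensional and $\C$ is algebraically closed, the embedded field is $\C$. Your argument instead fixes a simple $A$-module $M$ with $P=\Ann_A M$, arranges a denominator $b\in Z/(Z\cap I)$ with $b\notin P/I$ via a $\Delta$-stable denominator ideal (this is a cleaner substitute for the paper's ``apply the observation repeatedly'' step, which never addresses what happens when $\delta(z)=0$), applies Dixmier's lemma to $M$ to extract scalars $\beta,\gamma$, and then closes the gap with the closure-under-$\Delta$ argument showing $V=0$. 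What the paper's embedding buys is brevity and the immediate identification of the correct target ring; what your approach buys is an explicit value for $z$ (namely $\gamma/\beta$), a corrected handling of the denominator-clearing step, and a self-contained reduction that stays entirely inside $A/I$ and the module $M$. One small notational slip: you write ``so its preimage $\tilde J$ ... and $A\tilde J$ is a two-sided $\Delta$-ideal of $A$'', but you should note explicitly that $A\tilde J\subseteq P$ would force $\tilde J\subseteq I\cap Z$ and hence $J=0$ since $\tilde J$ is the \emph{full} preimage; as written it reads smoothly but it is worth making the preimage convention explicit since the contradiction depends on it.
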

\begin{proof}
Let $I$ be $\Delta$-locally closed. By Lemma 9.1.2(ii) and Corollary 9.1.8(i) of \cite{MR} we know that $A$ is a Jacobson ring,
and so $I = \bigcap_{s \in S} I_s$ for some index set $S$ where each $I_s$ is a primitive ideal of $A$. From part (4) of Lemma~\ref{primebasics}
we deduce that $I = \bigcap_{s\in S} \P_\Delta(I_s)$. Now by Lemma~\ref{intover} it follows that $I = I_s$ for some $s\in S$, hence $I$ is $\Delta$-primitive.

We now prove (2), so suppose that $I = \P_\Delta(J)$ is $\Delta$-primitive, and that $J \subseteq A$ is primitive.
We shall show that $C_\Delta(Q(A/I))\hookrightarrow A/J_0A$ where $J_0 = J \cap Z$. Since $J$ is primitive Dixmier's lemma
tells us that $J_0$ is a maximal ideal of $Z$. Since $A$ is a finite module over $Z$ it follows that $A/J_0A$ is finite dimensional over $\C$,
thus $\C$ is the only subfield of $A/J_0A$. Hence once we have proven the existence of such an embedding the lemma will be complete.

After replacing $A$ by $A/I$ we can assume that $\P_\Delta(J) = 0$ and show that $C_\Delta(Q(A))\hookrightarrow A/J_0A$.
Since $A$ is prime and finite over its centre we have $Q(A) \cong A \otimes_{Z} Q(Z)$ (Cf. \textsection \ref{S:thePDMEsec}) and this isomorphism is $\Delta$-equivariant.
Now if $a \otimes z^{-1} \in C_\Delta(A\otimes_{Z} Q(Z)))$ then by (\ref{leibniz}) we have $a\otimes z^{-1} = \delta(a) \otimes \delta(z)^{-1}$ for all $\delta \in \Delta$.
Since $J$ contains no nonzero $\Delta$-ideals, we can use this observation repeatedly to find a representative $a_1 \otimes z_1^{-1}$ of $a \otimes z^{-1}$
such that $z_1 \notin J$. In other words we have $a \otimes z^{-1}\in A \otimes_Z Z_{J_0} $ where $Z_{J_0}$ denotes the localisation of $Z$
at the prime $J_0 \unlhd Z$. We have
$$A\otimes_Z Z_{J_0} / J_0Z_{J_0} \cong A \otimes_Z Z/J_0 \cong A/J_0A$$
and so there is a map
$$C_\Delta(Q(A)) \hookrightarrow A \otimes_Z Z_{J_0} \twoheadrightarrow A/J_0 A.$$
The composition is necessarily an embedding since $C_\Delta(Q(A))$ is a field.
\end{proof}

\subsection{The $\Delta$-rational ideals are the $\Delta$-primitive ideals}
Now we suppose that $S \subseteq Z$ is a multiplicative subset, so that the localisation $ZS^{-1}$ may be defined. Notice that $S$ is also
a multiplicative subset of $A$ and the ring $AS^{-1} := A \otimes_Z ZS^{-1}$ satisfies the universal property of the localisation of $A$ at $S$.
In the following we identify $ZS^{-1}$ with a subalgebra of $AS^{-1}$ and, for the sake of economy, we view $A$ and $S^{-1}$ as subsets of $AS^{-1}$.
Notice that the derivations $\Delta$ extend uniquely to a set of derivations of $AS^{-1}$ via the Leibniz rule (\ref{leibniz}).

There are natural operations which send ideals of $A$ to ideals of $AS^{-1}$ and vice versa, as follows. For each ideal $I \subseteq A$
we define the extension $I^e := I \otimes_Z ZS^{-1}$ which is an ideal of $AS^{-1}$, and for each ideal $I \subseteq AS^{-1}$ we define the contraction
$I^c := I \cap A$. The following classical fact can be found in \cite[Theorem~10.20]{GW}, for instance.
\begin{Lemma}
The following hold:
\begin{enumerate}
\item[(i)]{$I^{ce} = I$ for every ideal $I$ of $AS^{-1}$;}
\item[(ii)]{$I^{ec} = I$ for each ideal $I$ of $A$ such that $A/I$ is $S$-torsion free;}
\item[(iii)]{Every ideal in the set $\Spec_S(A) := \{I \in \Spec(A)\mid I \cap S = \emptyset\}$ is $S$-torsion free.}
\end{enumerate}
\end{Lemma}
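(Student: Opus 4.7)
The plan is to treat the three parts in turn; all of them follow easily from the fact that the localisation $AS^{-1}$ is an Ore localisation with respect to a central multiplicative set $S \subseteq Z$. In particular $AS^{-1} = A \otimes_Z ZS^{-1}$ and every element of $AS^{-1}$ can be written as $as^{-1}$ with $a\in A$ and $s\in S$; the extension of an ideal $I$ of $A$ coincides with $IS^{-1} = I \cdot ZS^{-1} \subseteq AS^{-1}$.

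For (i), let $J$ be an ideal of $AS^{-1}$. The inclusion $J^{ce} \subseteq J$ is immediate, since $J^c = J \cap A \subseteq J$ and $J$ is closed under multiplication by elements of $AS^{-1}$. Conversely, I would write an arbitrary element of $J$ as $as^{-1}$ with $a \in A$, $s\in S$, note that $a = (as^{-1})s \in J \cap A = J^c$, and conclude that $as^{-1} = a \cdot s^{-1} \in J^c S^{-1} = J^{ce}$. For (ii), with the hypothesis that $A/I$ is $S$-torsion free, the inclusion $I \subseteq I^{ec}$ is clear. For the other direction, I would take $a \in I^{ec} = IS^{-1} \cap A$ and write $a = bs^{-1}$ for some $b \in I$, $s \in S$, so that $as = b \in I$. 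The image $\overline{a} \in A/I$ then satisfies $\overline{a} s = 0$, and the torsion-freeness hypothesis forces $\overline{a} = 0$.

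For (iii), let $I \in \Spec_S(A)$ and suppose $s \in S$, $a \in A$ satisfy $sa \in I$. The key point is to exploit the centrality of $s$: since $s \in Z$, one has $sxa = xsa \in I$ for all $x \in A$, that is $sAa \subseteq I$. Primeness of $I$ (in the standard non-commutative sense: $bAc \subseteq I$ forces $b \in I$ or $c \in I$) combined with $s \notin I$, which holds because $I \cap S = \emptyset$, yields $a \in I$, and hence $A/I$ is $S$-torsion free.

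No step here is substantive; the only minor care required is to formulate primeness correctly in the non-commutative setting in (iii), and to check that elements of $AS^{-1}$ admit a common denominator coming from the central set $S$ so that the formula $as^{-1}$ is always available. Once these two points are in place the three statements are essentially definitional; this is a standard fact about localisation at a central multiplicative subset and is why the authors are content to quote \cite[Theorem~10.20]{GW}.
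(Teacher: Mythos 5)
Your argument is correct and follows the same elementary route the authors had in mind: the paper itself only cites \cite[Theorem~10.20]{GW} for this fact, and the proof sketch retained (commented out) in the source treats (i) and (ii) exactly as you do, by clearing denominators to a single central $s\in S$ and using $S$-torsion freeness, leaving (iii) as ``proven similarly''. Your explicit argument for (iii) — using centrality of $s$ to get $sAa\subseteq I$ and then invoking non-commutative primeness together with $I\cap S=\emptyset$ — is precisely the intended filling of that gap.
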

\iffalse
\begin{proof}
When $I \subseteq AS^{-1}$ is an ideal the inclusion $I^{ce} \subseteq I$ is clear, whilst the opposite inclusion follows from the fact that
every element of $AS^{-1}$ can be written in the form $as^{-1}$ for some $a\in A$ and $s \in S$, by clearing denominators.
Now let $I \subseteq A$ be an ideal. The inclusion $I \subseteq I^{ec}$ is clear. Suppose $a \in A$ and $a = bs^{-1}$ for some $b \in I, s\in S$.
Then $as \in I$ implies $a \in I$ when $A/I$ is $S$-torsion free, hence $I = I^{ec}$. The last claim is proven similarly.
\end{proof}
\fi

The lemma leads directly to a crucial proposition, which is probably well known.
\begin{Proposition}\label{idealsbijection}
The following hold:
\begin{enumerate}
\item[(1)]{Extension and contraction of ideals define inverse bijections between the sets $\Spec_S(A)$
and $\Spec(AS^{-1})$;}
\item[(2)]{These bijections preserve the subsets of $\Delta$-ideals;}
\item[(3)]{When $AS^{-1}$ is countably generated these bijections also preserve the set of all primitive ideals.}
\end{enumerate}
\end{Proposition}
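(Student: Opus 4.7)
The plan is to apply the preceding lemma for Part (1), a direct Leibniz-rule calculation for Part (2), and a module-theoretic argument based on Dixmier's lemma for Part (3). For Part (1), I first observe that any $J \in \Spec(AS^{-1})$ contracts to a prime $J^c = J \cap A$; since elements of $S$ are units in $AS^{-1}$, we have $J^c \cap S = \emptyset$, so $J^c \in \Spec_S(A)$. Conversely, for $I \in \Spec_S(A)$ the quotient $AS^{-1}/I^e$ is naturally isomorphic to the localization $(A/I)[S^{-1}]$ of the prime ring $A/I$ at the image of the central multiplicative set $S$; primeness of $A/I$ makes these nonzero central images non-zero-divisors, so the localization is again prime and $I^e \in \Spec(AS^{-1})$. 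The identities $J^{ce} = J$ and $I^{ec} = I$ from the preceding lemma (together with part (iii), which ensures $A/I$ is $S$-torsion free) yield the desired mutually inverse bijection.

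For Part (2), let $I \subseteq A$ be a $\Delta$-ideal and fix $\delta \in \Delta$, $a \in I$, $s \in S$; applying the Leibniz rule \eqref{leibniz} expresses $\delta(as^{-1})$ as an $AS^{-1}$-linear combination of $\delta(a)$ and $a$, both of which lie in $I$, so $\delta(as^{-1}) \in I^e$. Conversely, if $J \subseteq AS^{-1}$ is a $\Delta$-ideal then each $\delta \in \Delta$ restricts to a derivation of $A$, so $\delta(J \cap A) \subseteq A \cap J = J^c$, and $J^c$ is $\Delta$-stable. For Part (3), one direction is elementary: if $J = \Ann_{AS^{-1}}(M)$ for a simple $AS^{-1}$-module $M$, then since $S$ acts invertibly on $M$ every $A$-submodule of $M$ is automatically $AS^{-1}$-stable; hence $M$ is simple over $A$ and $J^c = \Ann_A(M)$ is primitive.

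The main obstacle is the opposite direction of Part (3): promoting a simple $A$-module to a simple $AS^{-1}$-module. Let $I = \Ann_A(M)$ be primitive with $I \cap S = \emptyset$. Because $A$ is finitely generated over the uncountable field $\C$, Dixmier's lemma yields $\End_A(M) = \C$, so the central subalgebra $Z$ acts on $M$ through a character $Z \to \C$ whose kernel is $Z \cap I$. The assumption $S \cap I = \emptyset$ then forces each $s \in S$ to act on $M$ as a nonzero scalar, in particular invertibly, so the $A$-action extends uniquely to an $AS^{-1}$-action. The module $M$ remains simple in this extended structure, since the lattice of submodules is unchanged, and because $\Ann_{AS^{-1}}(M) \cap A = \Ann_A(M) = I$, the bijection of Part (1) forces $\Ann_{AS^{-1}}(M) = I^e$; hence $I^e$ is primitive. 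The countability hypothesis on $AS^{-1}$ (automatic from finite generation of $A$) is precisely what makes Dixmier's lemma available at this step.
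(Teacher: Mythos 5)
Your handling of Parts (1) and (2) is fine and matches the paper. For Part (3), however, you have the logical burden of the two directions reversed, and the direction you label ``elementary'' contains a genuine gap.

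For the contraction direction ($J \in \Prim(AS^{-1}) \Rightarrow J^c \in \Prim(A)$), your claim that ``since $S$ acts invertibly on $M$ every $A$-submodule of $M$ is automatically $AS^{-1}$-stable'' is false in general. If $N \subseteq M$ is an $A$-submodule then $sN \subseteq N$, but there is no reason for the containment to be an equality unless $N$ is finite-dimensional (so that the injective map $s|_N$ is forced to be onto). For a concrete failure outside the finite-dimensional setting: with $A = Z = \C[x]$, $S = \C[x]\setminus\{0\}$, $AS^{-1} = \C(x)$, the simple $\C(x)$-module $\C(x)$ has the non-$\C(x)$-stable $\C[x]$-submodule $\C[x]$. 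What you actually need here is to invoke the countable generation of $AS^{-1}$ --- this is precisely where that hypothesis enters the proof --- so that Dixmier's lemma (the endomorphism property \cite[Proposition~9.1.7]{MR}) applies to $AS^{-1}$, forcing $ZS^{-1}$ to act by a character and hence $M$ to be finite-dimensional over $\C$. Once $M$ is finite-dimensional the claim becomes valid (this is essentially the route the paper takes, via producing a simple $A$-submodule of $M$). Your closing remark compounds the confusion: you invoke the countability of $AS^{-1}$ to justify Dixmier's lemma for $A$, but $A$ is finitely generated so needs no such hypothesis, and countable generation of $AS^{-1}$ is not automatic from finite generation of $A$ (take $A = \C[x,y]$ and $S = \C[x]\setminus\{0\}$).

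Your argument for the extension direction ($I \in \Prim(A)\cap\Spec_S(A) \Rightarrow I^e \in \Prim(AS^{-1})$) is correct and is actually a nice alternative to the paper's. The paper passes to the localised module $MS^{-1}$ and shows it is nonzero and simple; you instead apply Dixmier's lemma to the finitely generated algebra $A$, deduce that $Z$ acts by a character with $S$ acting by nonzero scalars, and extend the $A$-action on $M$ itself to an $AS^{-1}$-action. That is perfectly valid, and it has the pleasant feature of making no use of the countability hypothesis. But be aware that the paper's localisation argument is also short and does not need Dixmier's lemma for $A$ at all, so the two approaches are of comparable effort; the hypothesis on $AS^{-1}$ is wholly spent on the contraction direction in both.
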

\begin{proof}
It is a fact, easily checked using part (i) of the previous lemma, that contraction through a central extension of rings preserves prime ideals.
For the reader's convenience, we check that extension sends $\Spec_S(A)$ to $\Spec(AS^{-1})$. Pick $P \in \Spec_S(A)$ and suppose that $IJ \subseteq P^e$.
Then $I^c J^c \subseteq P^{ec} = P$ by part (ii) and (iii) of the previous lemma, and so we may assume $I^c \subseteq P$ by primality.
Using part (i) of that same lemma once again $I = I^{ce} \subseteq P^e$ and so ideals in $\Spec_S(A)$ extend to $\Spec(AS^{-1})$ as claimed. 
Now apply all three parts of the previous lemma to deduce that extension and contraction are inverse bijections on prime ideals, proving (1) of the current Proposition.
The fact that the $\Delta$-ideals are preserved is an immediate consequence of the Leibniz rule for derivations. The statement regarding
primitive ideals requires slightly more work, as we now explain.

Suppose that $M$ is a simple $A$-module with $I = \Ann_A(M)$ satisfying $I \cap S = \emptyset$. We claim that $M S^{-1} := M \otimes_Z ZS^{-1}$ is a simple nonzero $AS^{-1}$-module.
The kernel of map $M \rightarrow M[S^{-1}]$ consists of $m \in M$ such that $sm = 0$ for some $s \in S$. If such an $m \neq 0$ exists then $M = Am$ and so $sM = 0$, meaning $s \in I \cap S$.
Since this is not the case, $MS^{-1}$ is non-zero and we conclude it is also simple over $AS^{-1}$, by essentially the same argument as we used in
the proof of Proposition~\ref{P:simplepoisson}. What is more,
the reader can easily verify that $\Ann_A(M) = A \cap \Ann_{AS^{-1}} (MS^{-1})$. This shows that every primitive ideal in $\Spec_S(A)$
is equal to $J \cap A$ for some $J \in \Prim(AS^{-1})$. 

We claim that whenever $M$ is a simple $A S^{-1}$-module there exists a simple $A$-submodule $N \subseteq M$.
To show that such a simple $A$-submodule $N\subseteq M$ exists, we observe that $AS^{-1}$ is a countably generated $\C$-algebra
hence it satisfies the endomorphism property \cite[Proposition~9.1.7]{MR}. Since $A$ is a finite module over $Z$ it follows that $AS^{-1}$ is a finite $ZS^{-1}$-module,
say $AS^{-1} = \sum_{i=1}^t ZS^{-1} a_i$ for certain elements $a_1,...,a_t \in AS^{-1}$. Therefore, for any $0 \neq m \in M$, we have $M = A m = \sum_{i=1}^t ZS^{-1} a_i m = \sum_{i=1}^t \C a_i m$.
It follows that $M$ is a finite dimensional $\C$-vector space and so by an elementary inductive argument $M$ contains a simple $A$-submodule $N \subseteq M$.
Observe that we have a map $NS^{-1} \rightarrow M$ given by $n \otimes zs^{-1} \mapsto zs^{-1}n$. The kernel must be trivial since $NS^{-1}$
is simple, and the image must be all of $M$ since $M$ is simple and the image contains $N \neq 0$, so $NS^{-1} \cong M$.
We deduce that $\Ann_{AS^{-1}}(M) \cap A = \Ann_A(N) \in \Prim(A) \cap \Spec_S(A)$. It follows that contraction of ideals
$\Spec(AS^{-1}) \rightarrow \Spec_S(A)$ restricts to a surjective map of sets $\Prim(AS^{-1}) \rightarrow \Prim(A) \cap \Spec_S(A)$.
It is actually bijective since it is the restriction of an injective map by (1), which proves (3). 
\end{proof}

Recall that we say $A$ is an essential extension of $Z$ provided every non-zero ideal of $A$ intersects $Z$ non-trivially.
\begin{Lemma}\cite[Theorem~6.3.8]{FS} \label{esslemma} 
If $A$ is a prime $\C$-algebra and a finite extension of a central subalgebra $Z$ then $A$ is an essential extension of $Z$.
\end{Lemma}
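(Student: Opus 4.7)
The plan is to produce a nonzero element of $I\cap Z$ by passing to the localization $A\otimes_Z Q(Z)$, showing it is simple artinian, and then contracting back.

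First I would set up the basic ring-theoretic framework. The centrality of $Z$ combined with primeness of $A$ forces $Z$ to be an integral domain: if $z_1 z_2 = 0$ with $z_i \in Z$, then $z_1 A z_2 = z_1 z_2 A = 0$, so primeness gives $z_1 = 0$ or $z_2 = 0$. The same centrality trick shows $A$ is $Z$-torsion-free as a module: if $za = 0$ with $z \in Z$, $a \in A$, then $zAa = Aza = 0$, forcing $z = 0$ or $a = 0$. Consequently the natural map $A \to A \otimes_Z Q(Z)$ is an embedding, where $Q(Z)$ is the field of fractions of $Z$.

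Next I would verify that the localized ring $B := A \otimes_Z Q(Z)$ is itself prime. Suppose $\alpha B \beta = 0$ with $\alpha, \beta \in B$. Clearing denominators one can write $\alpha = a/s$, $\beta = b/t$ with $a,b \in A$ and $s,t \in Z\setminus\{0\}$; then $acb = 0$ in $B$ for every $c \in A$, and $Z$-torsion-freeness lifts this to $aAb = 0$ inside $A$, so primeness of $A$ forces $a = 0$ or $b = 0$. Since $A$ is a finite $Z$-module, $B$ is finite-dimensional over the field $Q(Z)$; being prime and artinian, it must be simple.

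For the final step I would extend the ideal: $I \otimes_Z Q(Z)$ is a two-sided ideal of $B$, and it is nonzero because $I$ embeds (again by torsion-freeness). Simplicity of $B$ forces $I\otimes_Z Q(Z) = B$, so $1 = \sum_i a_i \otimes (z_i/s_i)$ with $a_i \in I$, $z_i,s_i\in Z$. Clearing the denominators by multiplying through by $s := \prod_i s_i$ and using that the equality takes place in $A \subseteq B$, one obtains $s = \sum_i a_i z_i' \in I$ for appropriate $z_i' \in Z$. Since $Z$ is a domain, $s \neq 0$, and thus $0 \neq s \in I\cap Z$.

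The only delicate point is keeping the torsion-freeness and localization arguments rigorous; everything else is bookkeeping. The primeness of $A\otimes_Z Q(Z)$ is the real heart of the argument, as it reduces the problem to the classical fact that nonzero ideals of a simple ring are the whole ring.
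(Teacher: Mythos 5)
Your argument is correct. Note that the paper offers no proof of this lemma at all: it is simply cited from Farnsteiner--Strade \cite[Theorem~6.3.8]{FS}, so there is no internal proof to compare against. Your approach is the standard one for results of this kind: deduce that $Z$ is a domain and $A$ is $Z$-torsion-free from primeness and centrality, embed $A$ in $B := A \otimes_Z Q(Z)$, observe that $B$ is prime and finite-dimensional over the field $Q(Z)$ (hence simple artinian), extend a nonzero two-sided ideal $I$ of $A$ to a nonzero two-sided ideal of $B$ which is therefore all of $B$, and then clear denominators to exhibit a nonzero element of $I \cap Z$. Each step is justified; in particular your use of flatness of $Q(Z)$ over $Z$ (implicit in the claim that $I \otimes_Z Q(Z)$ sits inside $B$) and of torsion-freeness to pass equations back from $B$ to $A$ are both sound. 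This is very likely the same argument found in the cited source.
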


\begin{Theorem}
Every $\Delta$-rational ideal in $\Spec_\Delta(A)$ is $\Delta$-primitive.
\end{Theorem}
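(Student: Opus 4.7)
The plan is to reduce the claim to the commutative weak Dixmier--M{\oe}glin equivalence on $Z$, which is known from \cite{BLLM,Go}, and then transfer the conclusion back to $A$ using the essentiality and lying-over properties of the finite centralising extension $Z\subseteq A$. As a first step I would replace $A$ by $A/I$ so as to reduce to the case $I=0$: the quotient inherits a $\Delta$-algebra structure, is prime by Lemma~\ref{primebasics}(3), and remains $\Delta$-rational by hypothesis, and thanks to Lemma~\ref{primebasics}(4) it then suffices to exhibit a primitive $J\in\Prim(A)$ with $\P_\Delta(J)=0$.

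Next I would transfer $\Delta$-rationality from $A$ down to $Z$. As $A$ is prime and a finite centralising extension of $Z$, the Goldie quotient takes the form $Q(A)=A\otimes_Z Q(Z)$, and $Q(Z)$ is contained in the centre of $Q(A)$. Consequently
\[
Q(Z)^{\Delta}\;\subseteq\;C(Q(A))^{\Delta}\;=\;C_\Delta Q(A)\;=\;\C,
\]
so $Z$ is itself a $\Delta$-rational affine commutative $\C$-algebra. The commutative weak Dixmier--M{\oe}glin equivalence for $\Delta$-algebras \cite{BLLM,Go} then supplies a maximal ideal $\m\in\Max(Z)$ with $\P_\Delta(\m)=0$.

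Finally I would lift $\m$ to a primitive ideal of $A$. By the Nullstellensatz $Z/\m=\C$, so $A/\m A$ is finite dimensional over $\C$; choose any maximal two-sided ideal $J$ of $A$ containing $\m A$. Then $J\cap Z$ is a prime ideal of $Z$ squeezed between $\m$ and $Z$, hence equal to $\m$. A maximal two-sided ideal is automatically primitive (take any simple module over the simple ring $A/J$), so $J\in\Prim(A)$. Lemma~\ref{primebasics}(4) yields $\P_\Delta(J)\cap Z = \P_\Delta(J\cap Z) = \P_\Delta(\m) = 0$, and Lemma~\ref{esslemma} tells us that the prime ring $A$ is an essential extension of $Z$, so the two-sided ideal $\P_\Delta(J)$ must itself vanish. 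The genuinely difficult input is the commutative weak DME, imported from \cite{BLLM,Go}; the remaining lifting argument is clean, with Lemma~\ref{primebasics}(4) ensuring compatibility of $\P_\Delta$ with contraction, and Lemma~\ref{esslemma} promoting vanishing on $Z$ to vanishing on $A$.
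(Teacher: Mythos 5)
Your route is genuinely different from the paper's. You reduce the theorem to its commutative special case (the weak $\Delta$-DME for $Z$), and then lift via the finite centralizing extension $Z\subseteq A$: pick a maximal two-sided $J\supseteq\m A$, observe $J\cap Z=\m$, invoke Lemma~\ref{primebasics}(4) to get $\P_\Delta(J)\cap Z = \P_\Delta(\m)=0$, and conclude $\P_\Delta(J)=0$ by essentiality (Lemma~\ref{esslemma}). That lifting step is correct and is in fact cleaner and more modular than what is in the paper.

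The paper does not, however, invoke the commutative weak DME as a finished black box; it is deliberately more parsimonious about what it imports from \cite{BLLM}. The paper's argument borrows only the countability lemma \cite[Theorem~3.2]{BLLM} (that a $\Delta$-rational $Z$ has countably many minimal nonzero $\Delta$-primes) and then performs the primitive-ideal construction directly in $A$: enumerate the minimal nonzero $\Delta$-primes $P_1,P_2,\dots$ of $A$ (countable because only finitely many primes of $A$ lie over each prime of $Z$), choose nonzero $s_i\in P_i\cap Z$ using essentiality, form the countable multiplicative set $S$ they generate, apply the countably-generated Nullstellensatz machinery of Proposition~\ref{idealsbijection} to $AS^{-1}$, and observe that any primitive ideal of $AS^{-1}$ contracts to a primitive ideal of $A$ with trivial $\Delta$-core. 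This makes the commutative case fall out as the special case $A=Z$, rather than serving as an input.

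What each buys: your approach isolates all the noncommutative work into a clean five-line lifting step and lets the commutative case do the heavy lifting; the paper's approach is self-contained in the $\Delta$-setting and avoids the question of whether \cite{BLLM,Go} literally cover arbitrary derivation sets $\Delta$ rather than only sets arising from Poisson brackets. If you keep your route, you should address that point explicitly: either verify that the results cited extend from the Poisson setting to general $\Delta$-algebras (the paper only claims this for the \emph{argument} behind \cite[Theorem~3.2]{BLLM}, not for the full weak DME), or supply the short commutative argument yourself -- which, by the countable-localization method the paper uses, is essentially the $A=Z$ case of its own proof.
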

\begin{proof}
We suppose that $J$ is a $\Delta$-prime ideal, that $C_\Delta(Q(A/J)) = \C$ and we aim to find a primitive ideal
$I \subseteq A$ with $\P_\Delta(I) = J$. After replacing $A$ by $A/J$ and replacing $Z$ by $Z/Z\cap J$ we see that it is sufficient
to suppose that $C_\Delta(Q(A)) = \C$ and find a primitive ideal $I \subseteq A$ with $\P_\Delta(I) = (0)$. By part (3) of Lemma~\ref{primebasics} we may adopt the hypothesis that $A$ is prime.
Since $C_\Delta(Q(Z)) \hookrightarrow C_\Delta(Q(A))$ it follows that $C_\Delta(Q(Z)) = \C$.

Let $M$ be the set of a minimal non-zero $\Delta$-prime ideals of $A$. We claim that $M$ is countable. First of all notice that, since $A$ is a finite extension of $Z$
there are finitely many prime ideals of $A$ lying over each prime ideal of $Z$. For the reader's convenience we sketch the proof of this fact.
If $\p\in \Spec(Z)$ is any prime ideal then the ideal $\p A$ is not necessarily prime, but since $A$ is a noetherian ring there are finitely many prime ideals $P_1,...,P_m$ of
$A$ over $\p A$. Suppose that $Q \in \Spec(A)$ is any ideal with $Q\cap Z = \p$. Then it follows that $Q$ contains one of the minimal primes $P_1,...,P_m$. We may suppose $P_1 \subseteq Q$.
Since $A / P_1$ is an essential extension of $Z / P_1 \cap Z$ we conclude from Lemma~\ref{esslemma} that either $Q = P_1$ or the image of $Q$ in $A/P_1$ intersects $Z/ P_1 \cap Z$
non-trivially. By assumption $Q \cap Z = \p =  P_1 \cap Z$ and so the latter does not hold, hence we may conclude $Q = P_1$. This proves that there are finitely many primes of $A$ lying over $\p$.
Now in order to prove that $M$ is countable, it suffices to show that $Z$ contains only countably many
minimal nonzero $\Delta$-prime ideals. This follows from the argument given in \cite[Theorem~3.2]{BLLM}, using the fact that $C_\Delta(Q(Z)) = \C$.

Now we may enumerate $M = \{P_1, P_2, P_3,...\}$ and write $\p_i := P_i \cap Z$ for all $i =1,2,3,...$. By assumption $A$ is prime and so thanks to Lemma~\ref{esslemma} we see that $A$ is an essential extension of $Z$.
In particular $\{\p_1, \p_2, \p_3,...\}$ are all non-zero. Choose non-zero elements $\{s_1, s_2, s_3,...\}$ with $s_i \in \p_i$ and
let $S$ be the multiplicative subset of $S$ generated by $\{s_1, s_2, s_3,...\}$. Note that $AS^{-1}$ is countably generated, so we are in a position to apply every conclusion of Proposition~\ref{idealsbijection}.

Let $I \in \Spec(AS^{-1})$ be any primitive ideal. Suppose for a moment that $I$ contains some non-zero $\Delta$-prime ideal.
Then it must contain a minimal non-zero $\Delta$-prime ideal, which we may denote by $K$. It follows from Proposition~\ref{idealsbijection} that $K \cap A$ is a non-zero $\Delta$-prime ideal
which intersects $S$ trivially. This is impossible, since $S$ contains a non-zero element of every non-zero $\Delta$-prime ideal of $A$. We may conclude that $\P_\Delta(I) = (0)$.
Now apply part (4) of Lemma~\ref{primebasics} to see that $\P_\Delta(I \cap A) = \P_\Delta(I) \cap A = (0)$. Thanks to the last part of Proposition~\ref{idealsbijection} we see that $I \cap A$ is a
primitive ideal of $A$ and so we have shown that $(0)$ is $\Delta$-primitive, as required.
\end{proof}

\subsection{The $\Delta$-weakly locally closed ideals are the  $\Delta$-rational ideals}

We now go on to prove that $\Delta$-rational ideals of $A$ enjoy a property which is strictly weaker than being $\Delta$-locally closed.
We say that an ideal $I \subseteq A$ is $\Delta$-weakly locally closed if the following set is finite
$$\{J \in \Spec_\Delta(A) \mid I \subseteq J, \Ht(J) = \Ht(I) + 1\}.$$

\begin{Theorem}
The $\Delta$-rational ideals of $A$ are precisely the $\Delta$-weakly locally closed ideals.
\end{Theorem}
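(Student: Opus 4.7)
The plan is to reduce to the case $I = 0$ and then transfer the equivalence from $A$ down to its affine central subalgebra $Z$, where the commutative weak Poisson Dixmier--Moeglin equivalence of \cite{BLLM} (in the $\Delta$-algebra formulation of \cite{Go}) already applies. By replacing $A$ with $A/I$ and $Z$ with $Z/(Z\cap I)$ (using Lemma~\ref{L:Pandcap} and part~(3) of Lemma~\ref{primebasics}) I may assume $I = 0$, so that $A$ becomes a prime $\C$-algebra which is a finite centralising extension of the affine Poisson domain $Z$. Under this reduction, $\Delta$-primes of $A$ containing $I$ correspond to $\Delta$-primes of the quotient with heights shifted by $\Ht(I)$, and $Q(A/I)$ is unchanged, so both the hypotheses and conclusions of the two conditions survive.

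Next I would establish a transfer principle between $\Spec_\Delta(A)$ and $\Spec_\Delta(Z)$: contraction $P \mapsto P \cap Z$ gives a surjection from the height-one $\Delta$-primes of $A$ onto the height-one $\Delta$-primes of $Z$ with finite fibres. Surjectivity uses Lying-Over together with part~(2) of Lemma~\ref{primebasics} applied to the ideal $\p A$ for each height-one $\Delta$-prime $\p$ of $Z$ (some minimal prime of $A$ over $\p A$ is automatically a $\Delta$-prime contracting to $\p$); the preservation of heights follows from Cohen--Seidenberg for finite centralising extensions (see \cite[Ch.~10]{MR}); and the finiteness of fibres is a standard consequence of the fact that only finitely many primes of $A$ lie over each prime of $Z$. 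Consequently, $0$ is $\Delta$-weakly locally closed in $A$ if and only if it is $\Delta$-weakly locally closed in $Z$.

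The subtler transfer is for rationality. Trivially $C_\Delta(Q(Z)) \subseteq C_\Delta(Q(A))$, so if $A$ is $\Delta$-rational then so is $Z$. For the converse, suppose $C_\Delta(Q(Z)) = \C$ and pick $\xi \in C_\Delta(Q(A))$. Since $A$ is finite over $Z$ we have $Q(A) \cong A \otimes_Z Q(Z)$, so $\xi$ is algebraic over $Q(Z)$; let $p(x) = x^n + \sum_{i<n} a_i x^i \in Q(Z)[x]$ be its monic minimal polynomial. For each $\delta \in \Delta$ the polynomial $p^\delta(x) := \sum_i \delta(a_i)x^i$ has degree strictly less than $n$ and satisfies $0 = \delta(p(\xi)) = p'(\xi)\delta(\xi) + p^\delta(\xi) = p^\delta(\xi)$, forcing $p^\delta = 0$ by minimality of $p$. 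Hence every coefficient of $p$ lies in $C_\Delta(Q(Z)) = \C$, and since $\C$ is algebraically closed we conclude $\xi \in \C$. Thus $Z$ is $\Delta$-rational if and only if $A$ is.

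Combining the two transfer principles with the weak Dixmier--Moeglin equivalence applied to the commutative $\Delta$-algebra $Z$ furnished by \cite{BLLM} yields the required equivalence for $A$. The main obstacle is the field-theoretic descent of rationality in the third paragraph: the argument relies on the separability of the algebraic extension $C(Q(A))/Q(Z)$ via the minimal-polynomial computation, which is precisely where characteristic zero is used.
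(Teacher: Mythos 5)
Your proposal is correct, but differs from the paper on one of the two directions. The paper only transfers one direction through $Z$: starting from $C_\Delta(Q(A))=\C$ it uses the trivial inclusion $C_\Delta(Q(Z))\subseteq C_\Delta(Q(A))$, invokes \cite[Theorem~7.1]{BLLM} to get finitely many height-one $\Delta$-primes of $Z$, and then lifts via going-down (\cite[13.8.14(iv)]{MR}) and finiteness of fibres. For the converse (``not $\Delta$-rational implies not $\Delta$-weakly locally closed'') the paper works directly on $A$: given a non-constant $az^{-1}\in C_\Delta(Q(A))$ it localises at $z$ and exhibits infinitely many distinct height-one $\Delta$-primes of $A_z$ by taking minimal primes over the central ideals $I_c := (az^{-1}-c)A_z$ as $c$ ranges over $\C$, then pulls them back via the extension/contraction bijection of Proposition~\ref{idealsbijection}. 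You instead symmetrise the transfer: the minimal-polynomial computation $p^\delta(\xi)=0 \Rightarrow p^\delta=0$ promotes the inclusion $C_\Delta(Q(Z))\subseteq C_\Delta(Q(A))$ to an ``$=\C$ iff $=\C$'' equivalence, and then you argue that contraction maps the height-one $\Delta$-primes of $A$ \emph{onto} those of $Z$ (not merely into), so that finiteness also descends from $A$ to $Z$. The surjectivity you sketch does go through --- a minimal prime $P$ over $\p A$ with $P\cap Z=\p$ exists, is a $\Delta$-ideal by Lemma~\ref{primebasics}(2), and has height one by incomparability/going-down --- so both transfers are bidirectional and the equivalence follows as you say. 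The two routes thus genuinely differ: yours is more uniform and avoids the localisation-and-$I_c$ construction at the cost of proving both transfer lemmas in two directions; the paper's $I_c$-argument is more hands-on and, by exhibiting the family of prime ideals explicitly, makes the failure of weak local closedness concrete. One small remark: the minimal-polynomial step does not really rest on separability of $C(Q(A))/Q(Z)$; what it needs is that each $\delta\in\Delta$ extends to $Q(A)$ as a derivation with $\delta(\xi)=0$, that $p^\delta$ has strictly smaller degree than the minimal polynomial, and finally that $\C$ is algebraically closed so the constant minimal polynomial forces $\xi\in\C$. The argument would in fact degrade in positive characteristic only at this last step, not because of an inseparability obstruction in the derivative computation.
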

\begin{proof}
After replacing $A$ with $A/I$ we
may show that $(0)$ is $\Delta$-rational if and only if it is $\Delta$-weakly locally closed.
We begin by supposing $(0)$ is not $\Delta$-rational and show that it is not $\Delta$-weakly locally closed.
Recall that we identify $Q(A)$ with
$A \otimes_Z Q(Z)$, and we identify $A$ and $(Z\setminus \{0\})^{-1}$ with subsets of $Q(A)$.
Suppose that there exists some non-constant $az^{-1} \in C_\Delta (Q(A))$ with $0 \neq z \in Z$.
We consider the localisation $Z_z := Z[z^{-1}]$ and $A_z := A \otimes_Z Z_z \subseteq Q(A)$.
For all $c \in \C$ we observe that $az^{-1} - c$ is central in $A_z$ and we consider the
ideals $I_c := (az^{-1} - c)A_z$. 

We claim that $\{I_c \mid c\in \C\}$ are generically proper ideals. If $az^{-1} - c$ is invertible in
$A_z$ then for $b\in A_z$
$$b (az^{-1} - c)^{-1} (az^{-1} - c) = b = (az^{-1} - c)^{-1} (az^{-1} - c)b = (az^{-1} - c)^{-1} b (az^{-1} - c).$$
Combined with the fact that $az^{-1} - c$ is not a zero divisor we conclude that $(az^{-1} - c)^{-1}$ is central in $A_z$.
Writing $C(A_z)$ for the centre of $A_z$ we conclude that $I_c$ is proper if and only if $J_c := (az^{-1} - c) C(A_z)$ is proper.
The Artin--Tate lemma implies that $C(A_z)$ is an affine algebra and so the ideals
$\{J_c \mid c\in \C\}$ are generically proper, which confirms the claim at the beginning of this paragraph.

Since $A$ is a noetherian ring so is $A_z$ and so by \cite[4.1.11]{MR} there are
finitely many minimal prime ideals over $I_c$ each of which has height one.
If some prime ideal contains both $I_c$ and $I_{c'}$ for some $c, c' \in \C$ then it
contains $c - c'$ and so these prime ideals are all distinct. Now we have found infinitely many $\Delta$-prime
ideals of $A_z$, all of height one. Finally we apply parts (1) and (2) of Proposition~\ref{idealsbijection} to see
that $(0)$ is not $\Delta$-weakly locally closed, as desired.

Now we show that if $(0)$ is $\Delta$-rational it is $\Delta$-weakly locally closed. First of all we observe
that $(0)$ is a $\Delta$-rational ideal of $Z$, thanks to the identification $Q(A) = A \otimes_Z Q(Z)$. Thanks to \cite[Theorem~7.1]{BLLM}
we know that $(0)$ is $\Delta$-weakly locally closed in $\Spec(Z)$. Suppose that $\p_1,...,\p_l$ are the set of those
minimal non-zero prime ideals of $Z$ which are $\Delta$-stable. Since there are finitely many prime ideals of $A$ lying above
each ideal $\p_1,...,\p_l$ it suffices to show that each one of the minimal non-zero prime ideals of $A$ which is $\Delta$-stable
lies over one of $\p_1,...,\p_l$. Let $P$ be a minimal non-zero prime of $A$ which is $\Delta$-stable and observe that $P \cap Z$ is
a $\Delta$-prime ideal. We must show that $P \cap Z$ is minimal amongst non-zero primes of $Z$. If not then there exists a prime $\q$ with
$\q \subsetneq P \cap Z$. It then follows from the going down theorem \cite[13.8.14(iv)]{MR} that there is a prime $Q$ of $A$ with $Q\cap Z = \q$ and $Q \subsetneq P$;
this contradicts the minimality of $P$ and so we deduce that $P\cap Z$ is a minimal non-zero prime, as required.
\end{proof}

\section{Proof of the first theorem and some applications}
\label{S:proofofthm1}
In this section we continue to take all vector spaces over $\C$.

\subsection{Existence of the bijection}
We begin with some topological observations about the space $\Prim(A)$ when $A$ is finite over its centre.
\begin{Lemma}
\label{L:keyspeclemma}
Let $A$ be a non-commutative ring which is finite over its affine centre $Z$. Then
\begin{enumerate}
\item If $A$ is prime, then every two open subsets of $\Prim(A)$ have a nonzero intersection.
\item If $\Prim(A)$ is stratified by irreducible locally closed sets then each stratum is determined by its closure.
\end{enumerate}
\end{Lemma}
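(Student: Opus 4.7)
The plan is to derive both parts from elementary topology together with the Jacobson property of $A$ that was already recorded in the paper: since $Z$ is affine commutative and hence Jacobson, and since $A$ is a finite $Z$-module, $A$ is Jacobson, so $\bigcap_{J \in \Prim(A)} J$ equals the nilradical of $A$, which is $0$ whenever $A$ is prime.

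For part (1), I will prove the stronger statement that $\Prim(A)$ is irreducible under the stated hypotheses (which is equivalent to any two nonempty opens meeting). A basic closed set of $\Prim(A)$ has the form $V(I) := \{J \in \Prim(A) \mid I \subseteq J\}$. Assume $V(I_1) \cup V(I_2) = \Prim(A)$. Then every $J \in \Prim(A)$ contains $I_1$ or $I_2$, and in either case contains the product $I_1 I_2$. By the Jacobson property just recalled, $I_1 I_2 \subseteq \bigcap_{J \in \Prim(A)} J = 0$, so primality of $A$ forces $I_1 = 0$ or $I_2 = 0$, i.e.\ $V(I_1) = \Prim(A)$ or $V(I_2) = \Prim(A)$. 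Dualising, any two nonempty open subsets of $\Prim(A)$ must have nonempty intersection.

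For part (2), let $\{S_\alpha\}$ be the stratification of $\Prim(A)$ into irreducible, locally closed subsets, and suppose $\overline{S_\alpha} = \overline{S_\beta}$. I will invoke two standard topological facts: every locally closed subset of a space is open in its own closure, and the closure of an irreducible set is irreducible. It follows that $S_\alpha$ and $S_\beta$ are both nonempty open subsets of the irreducible topological space $\overline{S_\alpha}$, so their intersection is nonempty; since the $S_\gamma$ form a partition of $\Prim(A)$, this forces $S_\alpha = S_\beta$. Thus each stratum is recovered uniquely from its closure.

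I do not anticipate any real obstacle: the main input is the Jacobson property of $A$, which is already established in the preliminaries. The finiteness of $A$ over its affine centre is not used directly in (2), entering only via its consequence that $A$ is Jacobson in (1), and part (2) is purely a piece of general topology applied to the stratification.
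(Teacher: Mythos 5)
Your proof is correct, and it takes a genuinely different and cleaner route than the paper's. For part~(1), the paper argues via the essentiality of $Z\hookrightarrow A$ (Lemma~\ref{esslemma}) and the localisation machinery of Proposition~\ref{idealsbijection}: it chooses nonzero central elements inside the defining ideals of the closed complements, inverts them, and observes that the resulting $\Prim(AS^{-1})$ is nonempty and sits inside the intersection. Your argument instead shows directly that $\Prim(A)$ is irreducible: if $V(I_1)\cup V(I_2)=\Prim(A)$ then $I_1 I_2$ lies in every primitive ideal, hence in the Jacobson radical; the Jacobson property of $A$ (already noted in \textsection\ref{S:sympcores}) identifies this radical with the nilradical, which vanishes because $A$ is prime, and primality then forces $I_1=0$ or $I_2=0$. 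This is more elementary, needing only that $A$ is Jacobson rather than the localisation apparatus. For part~(2), the paper reduces to part~(1) applied to $\Prim(A/I)$ where $V(I)$ is the common closure (using that $I$ may be taken prime), whereas you observe that the statement is pure topology: the closure of an irreducible subset is irreducible, a locally closed subset is open in its closure, and two nonempty open subsets of an irreducible space must meet; disjointness of the stratification finishes the job. Your route avoids part~(1) altogether and makes no use of the ring structure for~(2), while the paper's route recycles part~(1) and hence re-derives irreducibility of $V(I)$ via primality of $A/I$. Both are valid; your version isolates the topological content more transparently. One small housekeeping point worth flagging: you correctly note that all of this leans on $A$ being Jacobson, which the paper deduces from $Z$ being affine commutative and $A$ finite over $Z$; that deduction is stated in the preliminaries and does carry over verbatim to the slightly more general hypotheses of this lemma (namely $A$ finite over its affine centre, with no Poisson structure assumed).
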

\begin{proof}
Let $\O_1, \O_2 \subseteq \Prim(A)$ be two open subsets and suppose that $\O_1 = \{ J \in \Prim(A) \mid I_1,...,I_s \nsubset J\}$
and $\O_2 = \{ J \in \Prim(A) \mid J_1,...,J_t \nsubset J\}$ for certain ideals $I_1,...,I_s,J_1,...,J_t$. Since $A$ is prime and finite over the centre $Z$ we
have that $Z\subseteq A$ is an essential extension by Lemma~\ref{esslemma}, and so we can choose nonzero element $i_k \in I_k \cap Z$ for $k=1,...,s$ and
$j_k \in J_k\cap Z$ for $k =1,...,t$. Since $A$ is prime, not one of these elements is nilpotent, and we may form the multiplicative subset $S\subseteq Z\setminus\{0\}$
which they generate. The set $\O_1 \cap \O_2 = \{J \in \Prim(A) \mid I_1,...,I_s,J_1,...,J_t \nsubset J\}$ contains the set $\{J \in \Prim(A) \mid J \cap S = 0\}$ and
according to Proposition~\ref{idealsbijection} the latter is in bijection with $\Prim(AS^{-1})$. Note that the hypotheses of that proposition are satisfied because $A$ is a finite module
over a finitely generated algebra, hence finitely generated. We know $\Prim(AS^{-1})$ is nonempty by Zorn's lemma, which proves (1).

Suppose that
\begin{eqnarray}
\label{e:somedec}
\Prim(A) = \bigsqcup_{k\in K} L_k
\end{eqnarray}
decomposes as a disjoint union of locally closed subsets and $L_1, L_2$ are two of these sets such
that $\overline L_1 = \overline L_2 = V(I)$ for some prime $I$. Then applying part (1) to the open subsets $L_1, L_2$ of $\Prim(A/I)$ we see that $L_1 \cap L_2 \neq \emptyset$,
and so $L_1 = L_2$ since the decomposition \eqref{e:somedec} is disjoint.
\end{proof}

Suppose that $Z$ is a regular complex affine Poisson algebra, so that $\Z := \Spec(Z)$ is smooth, and that the symplectic leaves of $\Z$
are locally closed in the Zariski topology. Thanks to the last part of the second main theorem we know that the PDME holds for $A$.
By the equivalence of (I) and (II) in the second main theorem we know that the closures of the symplectic cores of $\Prim(A)$ are the sets of the form
$V(\P(I)) = \{J \in \Prim(A) \mid \P(I) \subseteq J\}$. Combining parts (b), (c) and the equivalence of (ii), (iii) from the second main theorem we deduce
that the closures of the symplectic cores are precisely the sets $\{\V(M) \mid M \text{ simple Poisson } A\text{-module}\}$.
Applying Lemma~\ref{L:keyspeclemma} we see that each symplectic core $\Co(I) \subseteq \Prim(A)$ is uniquely determined
as the open core in its closure, which shows that the map from the first main theorem is a bijection, as claimed. 

\begin{Remark}
\label{R:leafalg}
The same proof works if we replace the assumption that the leaves are algebraic with the assumption that the PDME holds for $\Z$,
which is a strictly weaker hypothesis, as may be seen upon comparing \cite[Example~3.10(v)]{GL} and \cite{Go}. Another example of
a Poisson algebra $Z$ with non-algebraic symplectic leaves for which the PDME holds is the polynomial ring $\C[x_1, x_2, x_3]$
with brackets $\{x,y\} = ax, \{y,z\} = -y, \{x, y\} = 0$. According to \cite[Example~3.8]{Go1} the leaves are non-algebraic but the PDME holds \cite{BLLM}
since the GK dimension is 3.
\end{Remark}

\subsection{The bijection is a homeomorphism}
\label{S:bijishom}
Retain the hypotheses of the previous subsection. Now that we have shown that the Poisson primitive ideals of $A$ are the annihilators of
simple modules we may denote the set of of such ideals $\PPrim(A)$. Denote the bijection described in the first main theorem by $\phi : \PPrim(A) \rightarrow \Prim_\Co(A)$.
It remains to prove that $\phi$ is a homeomorphism. We must give a precise description
of the topology on each of these two spaces and observe that $\phi$ sets up a bijection between closed subsets of its domain and codomain.

The space $\Prim_\Co(A)$ is endowed with the quotient topology, meaning that for $S \subseteq \Prim(A)$ the set
$\{\Co(I) \mid I \in S\} \subseteq \Prim_\Co(A)$  is closed if and only if $\bigcup_{I \subseteq S} \Co(I)$ is closed.
Since $A$ is noetherian the closed sets of $\Prim(A)$ have finitely many irreducible components and so the closed sets of the form
$\bigcup_{I \subseteq S} \Co(I)$ can be labelled as follows: for each finite set $S = \{I_1,...,I_n\} \subseteq \PPrim(A)$
the set
\begin{eqnarray}
\label{e:closedcores}
C_S := \{\Co(I) \mid I_j \subseteq I \text{ for some } j=1,...,n\}
\end{eqnarray} is a closed subset of $\Prim_\Co(A)$
and all closed sets are obtained in this manner. This statement uses the fact that $\PPrim(A)$ are the defining ideals of the closures of the symplectic cores,
and this is a consequence of the second main theorem.

The space $\PPrim(A)$ is endowed with its Jacobson topology (see \cite[3.2.2]{Di}) and so closed subsets of $\PPrim(A)$
are constructed as follows, for each subset $T \subseteq \PPrim(A)$
there is a closed set $D_T = \{I \in \PPrim(A) \mid \bigcap_{J \in T} J \subseteq I\}$ and all closed sets occur in this way.
For $T\subseteq \PPrim(A)$ we let $S$ be the set of primes of $A$ which are minimal over $\bigcap_{J\in T} J$.
Thanks to Proposition 3.1.10 and Lemma~3.3.3 of \cite{Di} this set is finite and consists of Poisson ideals of $A$.
It is easy to see that $D_T = D_S$ and we conclude that the closed sets of both $\PPrim(A)$ and $\Prim_\Co(A)$ can be labelled
(non-uniquely) by finite subsets of $\PPrim(A)$.

It is clear that under the assumptions of our first theorem, when $J \in \PPrim(A)$ the map $\phi$ sends the closed set
$D_{\{J\}}$ to the closed set $C_{\{J\}}$. It is also not hard to see that $\phi (D_S \cup D_T) = \phi(D_S) \cup \phi(D_T)$
for finite subsets $S, T \subseteq \PPrim(A)$ and that $D_S \cup D_T = D_{S \cup T}$ and $C_{S \cup T} = C_S \cup C_T$.
It follows that $\phi D_S = C_S$ for any finite set $S \subseteq \PPrim(A)$.
We conclude that $\phi$ sends closed sets of $\PPrim(A)$ bijectively to those of $\Prim_\Co(A)$, and so $\phi$ is
a homeomorphism.

\subsection{Solvable Lie--Poisson algebras}
\label{S:solvablePLA}
As we mentioned in Remark following the first Corollary in \textsection \ref{S:sympcoresandmodules} we can combine Dixmier's theorem
and our main theorem to deduce that both $\Prim U(\g)$ and $\PPrim \C[\g^*]$ are parameterised by $\g^* / G$ in the case where $\g = \Lie(G)$
is the Lie algebra of a complex solvable algebraic group. Our purpose here is to explain this correspondence in more detail,
and show that these parameterisations are dual to one another in a sense.

First of all let $\g$ be any finite dimensional Lie algebra (over $\C$, although this remark holds over any field) and let $D := \C[\epsilon]/ (\epsilon^2)$
be the ring of double numbers. Define the double Lie algebra by $\g_D := \g \otimes_\C D$. We claim that $\C[\g^*]^e$ is naturally isomorphic to $U(\g_D)$.
Define a map $i : \g_D \rightarrow \C[\g^*]^e$ by $i(x + \epsilon y) := \delta(x) + \alpha(y)$ where $(\alpha, \delta)$ are the structure maps of $\C[\g^*]^e$
and $x,y\in \g$. By the defining relations of $\C[\g^*]^e$ it follows that $i[x,y] = i(x) i(y) - i(y) i(x)$ for every $x, y\in \g_D$ and so by the universal property of
$U(\g_D)$ there is a homomorphism $U(\g_D) \rightarrow \C[\g^*]^e$ which is surjective because the image of $i$ generates $\C[\g^*]^e$ and injective by
the PBW theorems for $U(\g_D)$ and $\C[\g^*]^e$ respectively.

When $\g = \Lie(G)$ is solvable, Dixmier's theorem states that there exists a bijection $ I_\g : \g^*/ G \rightarrow \Prim U(\g)$ which we briefly recall. When $\chi \in \g^*$
there always exists a \emph{polarisation $\p \subseteq \g$ of $\chi$}: this is a Lie subalgebra such that $\dim \p = (\dim\g + \dim \g_\chi)/2$ where $\g_\chi$
denotes the stabiliser, and $\chi[\p,\p] = 0$. The latter condition means that $\chi$ defines a one dimensional $\p$-module $\C_\chi$.
Dixmier's map is defined for $\chi \in \g^*$ by $$I_\g(\chi) := \Ann_{U(\g)} \Ind^{U(\g)}_{U(\p)} \C_\chi.$$
According to \cite[\textsection 6]{Di} it is well defined and descends to the required bijection. 

When $\g$ is solvable and algebraic it easily seen that $\g_D$ is solvable and algebraic too. Viewing $\g$ as a $G$-module we can define $G_D := G \ltimes \g$
with $1 \ltimes \g$ an abelian unipotent subgroup, and $\g_D \cong \Lie(G_D)$. We embed $\g^*$ inside $\g_D^*$ by setting $\chi(x + \epsilon y) = \chi(x)$ for $x,y\in \g$ and $\chi \in \g^*$.
It is readily seen that $\g^*$ is a $G_D$-submodule with $1\ltimes \g$ acting trivially on $\g^*$ and on $\g^*_D / \g^*$. We may view both the submodule and the quotient as $G$-modules via the identification $G_D / 1\ltimes \g \cong G$,
and the short exact sequence $\g^* \hookrightarrow \g_D^* \twoheadrightarrow \g^*_D / \g^*$ allows us to identify $\g^* / G$ naturally with a subset and a quotient set (i.e. a set of equivalence classes) of $\g^*_D / G_D$. 

Since $\epsilon \g$ is an ideal of $\g_D$ it generates an ideal of $U(\g_D)$ and we have $U(\g) \cong U(\g_D) / (\epsilon \g)$. This induces an embedding
$\Prim U(\g) \hookrightarrow \Prim U(\g_D)$. Under the isomorphism $U(\g_D) \cong \C[\g^*]^e$ the natural map $\alpha :\C[\g^*] \hookrightarrow \C[\g^*]^e$
identifies with $\C[\g^*] \cong S(\g) \cong U(\epsilon \g) \hookrightarrow U(\g_D)$ and so there is a natural map $\Prim U(\g_D) \rightarrow \PPrim \C[\g^*]$
given by $I \mapsto I \cap U(\epsilon \g)$. Now the relationship between the parameterisations of $\Prim U(\g)$ and $\PPrim \C[\g^*]$ can be explained.
\begin{Proposition}
There is a commutative diagram
\begin{eqnarray}
\begin{array}{c}
\begin{tikzpicture}[node distance=2cm, auto]
\pgfmathsetmacro{\shift}{0.3ex}
\node (A) {$\Prim U(\g)$};
\node (G) [right of= A] {$ $};
\node (B) [right of=G] {$\Prim U(\g_D)$};
\node (H) [right of= B] {$ $};
\node (C) [right of=H] {$\PPrim \C[\g^*]$};
\node (D) [below of= A] {$\g^* / G$};
\node (E) [below of=B] {$\g_D^* / G_D$};
\node (F) [below of=C] {$(\g_D^* / \g^*) / G_D \cong \g^*/ G$};

\draw[right hook->] (A) --(B) node[above,midway] {$ $};
\draw[->>] (B) --(C) node[above,midway] {$ $};
\draw[right hook->] (D) --(E) node[above,midway] {$ $};
\draw[->>] (E) --(F) node[above,midway] {$ $};

\draw[->] (A) --(D) node[right,midway] {$I_\g$};
\draw[->] (B) --(E) node[right,midway] {$I_{\g_D}$};
\draw[->] (C) --(F) node[right,midway] {$ $};
\end{tikzpicture}
\end{array}
\end{eqnarray}
where the right hand vertical arrow is constructed in our first main theorem,
and all other arrows are defined above. The composition of the horizontal maps are the constant maps respectively sending
$\Prim U(\g)$ to the annihilator of the trivial Poisson $\C[\g^*]$-module, and sending $\g^*/G$ to the zero orbit.
\end{Proposition}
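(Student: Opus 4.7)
The plan is to verify the two squares separately and then derive the composition statements. The key identification is the algebra isomorphism $U(\g_D) \cong \C[\g^*]^e$ from the preamble, under which $x \in \g \subseteq U(\g_D)$ corresponds to $\delta(x)$ and $\epsilon y \in \epsilon\g$ corresponds to $\alpha(y)$; in particular the embedding $\C[\g^*] = S(\epsilon\g) \hookrightarrow U(\g_D)$ agrees with $\alpha$.

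For the left square, given $\chi \in \g^*$ with polarization $\p$, I would verify that $\p_D := \p \oplus \epsilon\g$ is a polarization of $\tilde\chi$ in $\g_D$: it is a Lie subalgebra since $\epsilon\g$ is an ideal, isotropy $\tilde\chi[\p_D, \p_D] = 0$ follows from $\chi[\p,\p] = 0$ and $\tilde\chi|_{\epsilon\g} = 0$, and the dimension count uses $(\g_D)_{\tilde\chi} = \g_\chi \oplus \epsilon\g$. A short induction using $[\epsilon y, x] = -\epsilon[x, y]$ and $\tilde\chi|_{\epsilon\g} = 0$ shows that $\epsilon\g$ acts as zero on $\Ind^{U(\g_D)}_{U(\p_D)}\C_{\tilde\chi}$; the module therefore factors through the surjection $U(\g_D) \twoheadrightarrow U(\g)$ and is isomorphic to $\Ind^{U(\g)}_{U(\p)}\C_\chi$. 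Taking annihilators yields $I_{\g_D}(\tilde\chi)$ equal to the preimage of $I_\g(\chi)$, proving commutativity.

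For the right square, fix $\chi_D \in \g_D^*$ and set $\chi := \chi_D|_{\epsilon\g}$ and $\psi := \chi_D|_\g$. Choose any polarization $\q \subseteq \g_\chi$ of the form $\psi[\cdot,\cdot]$ on $\g_\chi$, whose existence follows from Vergne's theorem applied to the solvable Lie algebra $\g_\chi$. Then $\p_D := \q \oplus \epsilon\g$ is a polarization of $\chi_D$ in $\g_D$, as verified by direct calculation (subalgebra, isotropic, correct dimension). Let $M := \Ind^{U(\g_D)}_{U(\p_D)}\C_{\chi_D}$ and $m_0 := 1 \otimes 1$. Since $\epsilon\g \subseteq \p_D$ acts on $m_0$ via $\chi$, the vector $m_0$ is a $\chi$-eigenvector for $\C[\g^*]$ with $\Ann_{\C[\g^*]}(m_0) = \m_\chi$. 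Because $M$ is a Poisson $\C[\g^*]$-module, $\Ann_{\C[\g^*]} M$ is a Poisson ideal contained in $\m_\chi$, hence contained in the Poisson core $\P(\m_\chi) = I(\overline{G\chi})$ (by Proposition~\ref{P:algebraicleaves} and the fact that coadjoint orbits for solvable $G$ are algebraic). For the reverse inclusion, PBW together with $U(\epsilon\g)\cdot m_0 = \C m_0$ gives that $M$ is $\C$-spanned by $x_1\cdots x_n m_0$ with $x_i \in \g$; the commutation relation $fx = xf - H(x)f$ in $\C[\g^*]^e$, combined with the Poisson-stability $H(\g)\P(\m_\chi) \subseteq \P(\m_\chi)$, shows by induction on $n$ that every $f \in \P(\m_\chi)$ annihilates $x_1\cdots x_n m_0$. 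Hence $\Ann_{\C[\g^*]} M = \P(\m_\chi)$, which under the first main theorem corresponds to the orbit $G\chi$.

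The composition statements are immediate: any $I \in \Prim U(\g)$ pulls back to an ideal of $U(\g_D)$ containing $\epsilon\g \cdot U(\g_D)$, whose intersection with $S(\epsilon\g) = \C[\g^*]$ is the augmentation ideal (it contains $\epsilon\g$ but not $1$), which is precisely the annihilator of the trivial Poisson $\C[\g^*]$-module. On the bottom row, every orbit in the image of $\g^*/G \hookrightarrow \g^*_D/G_D$ is represented by a functional vanishing on $\epsilon\g$, and thus projects to $0 \in (\g^*_D/\g^*)/G_D \cong \g^*/G$. The main technical obstacle throughout is the reverse inclusion $\P(\m_\chi) \cdot M = 0$ in the right square; the careful choice of polarization containing $\epsilon\g$ is precisely what reduces this to the inductive commutator argument above.
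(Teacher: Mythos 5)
The paper states this Proposition without proof; Section~6.3 constructs all the maps in the diagram but leaves the commutativity, and the description of the compositions, to the reader. Your argument is correct and complete. The left square is as you say: extend a polarization $\p$ of $\chi$ to $\p\oplus\epsilon\g$ and observe by a PBW induction that $\epsilon\g$ acts trivially on the induced module, so the two induced modules (over $\g$ and over $\g_D$) coincide and the annihilators correspond under $U(\g_D)\twoheadrightarrow U(\g)$. For the right square you correctly produce a polarization of $\chi_D$ containing the abelian ideal $\epsilon\g$ (by symplectic reduction of $B_{\chi_D}$ to $(\epsilon\g)^\perp/\epsilon\g\cong\g_\chi$ and then Vergne's theorem applied there), which forces $m_0$ to be a $\m_\chi$-eigenvector for $\C[\g^*]=S(\epsilon\g)$; the equality $\Ann_{\C[\g^*]}(M)=\P(\m_\chi)$ then follows exactly as you argue, with the upper bound because the annihilator is automatically Poisson and the lower bound by the PBW-length induction using $[\delta(x),\alpha(f)]=\alpha(\{x,f\})$ together with Hamiltonian stability of $\P(\m_\chi)$. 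The composition claims are also as you say. The only point I would make explicit: you silently use that the Dixmier map $I_{\g_D}$ is independent of the choice of polarization, so that $\Ann_{U(\g_D)}(M)$ really is $I_{\g_D}(\chi_D)$ for your conveniently-chosen $M$; this is part of Dixmier's theorem but deserves one sentence at the outset.
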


\subsection{Quantum groups and open problems}
\label{S:appoftheorem}

In the introduction we proposed two applications of the first main theorem: a description of the annihilators of simple Poisson $\C[\g^*]$-modules
when $\g$ is a complex algebraic group, and also annihilators of simple modules over the classical finite $W$-algebra. Both of these
examples are Poisson algebras and so they do not use the full force of the first main theorem. We conclude by mentioning some famous examples
where $A$ is a Poisson order over a proper Poisson subalgebra $Z$, satisfying the hypotheses of the main theorem. 

Let $q$ be a variable and consider the affine $\C[q]$-algebra $A$ generated by $X_1,...,X_n$
subject to relations $X_i X_j - q X_j X_i$ for $i < j$. This is the \emph{single parameter generic quantum affine space}.
When we specialise $q \rightarrow \epsilon$ where $\epsilon$ is a primitive $\ell$th root of unity for some $\ell > 1$,
we obtain a Poisson order. To be precise, the subalgebra $Z_0$ of $A_\epsilon := A / (q - \epsilon) A$ generated by $\{X_i^\ell \mid i =1,...,n\}$
is central, known as \emph{the $\ell$-centre}. Following the observations of \textsection \ref{S:POexamples} we see that
$Z_0$ is a Poisson algebra and $A_\epsilon$ is a Poisson order over $Z_0$. There is a $(\k^\times)^n$-action on $A_\epsilon$
rescaling the generators and it is not hard to see that there are only finitely many $(\k^\times)^n$-stable Poisson prime ideals.
It follows from the results of \cite{Go} that the PDME holds for $Z_0$ and so by Remark~\ref{R:leafalg}
our first main theorem applies to $A_\epsilon$. In particular, the space $\PPrim(A_\epsilon)$ of annihilators of simple Poisson $A_\epsilon$-modules
is homeomorphic to the set $\Prim_\Co(A_\epsilon)$.

Two other natural examples to consider are the quantised enveloping algebras $U_q(\g)$ where $\g$ is a complex semisimple Lie algebra,
and their restricted Hopf duals $\O_q(G)$; see \cite{BG} for more detail. Once again, after specialising the deformation parameter $q$ to $\epsilon$ an
$\ell$th root of unity we denote one of these algebras by $A_\epsilon$. Just as for quantum affine space the $\ell$th powers of the standard generators
in either of these examples generate a central subalgebra $Z_0$. In these cases the symplectic leaves are actually locally closed
and so our main theorem applies here too.

\begin{Problem}
For all of the families of algebras discussed above:
\begin{enumerate}
\item describe the symplectic cores of $\Prim(A_\epsilon)$ explicitly by computing the fibres of the contraction $\PPrim(A_\epsilon) \rightarrow \PPrim(Z_0)$;
\item for each symplectic core $\Co(I) \subseteq \Prim(A_\epsilon)$ construct an explicit example of a simple Poisson $A$-module $M$ such that $\V(M) = \overline{\Co(I)}$.
\end{enumerate}
\end{Problem}
Of course Theorem~\ref{primisann} shows that such a module exists but the proof is non-constructive.
We hope that by constructing modules more explicitly (for example, by generators and relations) as per Problem~(2) it will become more
apparent how we could hope to deform a simple $A_\epsilon / \m A_\epsilon$-module over the closure of the symplectic core $\overline{\Co(\m)}$, where $\m \in \Prim(Z_0)$.

\end{document}